\def\C{{\mathbb C}}
\def\R{{\mathbb R}}
\def\N{{\mathbb N}}
\def\Sph{{\mathbb S}} 
\def\virgp{\raise 2pt\hbox{,}}
\def\({\left(}
\def\){\right)}
\def\<{\left\langle}
\def\>{\right\rangle}
\def\d{{\partial}}
\def\eps{\varepsilon}
\DeclareMathOperator{\supp}{supp}
\newcommand{\de}{\mathrm{d}}
\newcommand{\re}{\mathrm{Re}}
\newcommand{\im}{\mathrm{Im}}
\newcommand{\cfun}{\mathcal C}
\newcommand{\mez}{\frac{1}{2}}
\newcommand{\1}[1]{\frac{1}{#1}}
\renewcommand{\d}{\partial}
\newcommand{\conj}[1]{\overline{#1}}
\newcommand{\pare}[1]{\left(#1\right)}
\newcommand{\quadre}[1]{\left[#1\right]}
\newcommand{\curly}[1]{\left\{#1\right\}}
\newcommand{\lplq}[4]{\|#1\|_{L^{#2}_tL^{#3}_x(#4\times\R^2)}}
\newcommand{\lplqs}[3]{\|#1\|_{L^{#2}_tL^{#3}_x}}
\newcommand{\LpLq}[4]{\norma{#1}_{L^{#2}_tL^{#3}_x\pare{#4\times\R^2}}}
\newcommand{\slabint}{\int_{k\tau}^{(k+1)\tau}\int_{\R^2}}
\newcommand{\wlim}{\rightharpoonup}
\newcommand{\wstar}{\overset{\ast}{\wlim}}
\newcommand{\norma}[1]{ \left\|#1\right\|}
\newcommand{\scalar}[2]{\left\langle#1,#2\right\rangle}
\DeclareMathOperator{\diver}{div}
\theoremstyle{plain}
\newtheorem{theorem}{Theorem}[section]
\newtheorem{lemma}[theorem]{Lemma}
\newtheorem{corollary}[theorem]{Corollary}
\newtheorem{proposition}[theorem]{Proposition}
\theoremstyle{definition}
\newtheorem{definition}[theorem]{Definition}
\newtheorem{remark}[theorem]{Remark}
\newtheorem*{remark*}{Remark}
\numberwithin{equation}{section}
\begin{document}

\title[QHD system in 2D]
{The Quantum Hydrodynamics system in two space dimensions}
\author[P. Antonelli]{Paolo Antonelli}
\address[P. Antonelli]{Department of Applied Mathematics and
Theoretical Physics\\
CMS, Wilberforce Road\\ Cambridge CB3 0WA\\ England}
\email{p.antonelli@damtp.cam.ac.uk}
\author[P. Marcati]{Pierangelo Marcati}
\address[P. Marcati]{Department of Pure and Applied Mathematics\\
via Vetoio, 1\\ 67010 Coppito\\ L'Aquila, Italy}
\email{marcati@univaq.it}
\begin{abstract}
In this paper we study global existence of weak solutions for the Quantum Hydrodynamics System in 2-D in the space of energy. We do not require any additional regularity and/or smallness assumptions on the initial data. Our approach replaces the WKB formalism with a polar decomposition theory which is not limited by the presence of vacuum regions. In this way we set up a self consistent theory, based only on particle density and current density, which does not need to define velocity fields in the nodal regions. The mathematical techniques we use in this paper are based on uniform (with respect to the approximating parameter) Strichartz estimates and the local smoothing property.
\end{abstract}

\date{\today}

\subjclass[2000]{....}
\keywords{....}

\thanks{The first author is supported by Award No. KUK-I1-007-43, funded by the King Abdullah University of Science and Technology (KAUST).}
\maketitle
\section{Introduction}
In this paper we are concerned with the Cauchy problem for the Quantum Hydrodynamics System in two space dimensions
\begin{equation}\label{eq:QHD}
\left\{\begin{array}{l}
\d_t\rho+\diver J=0\\
\d_tJ+\diver\pare{\frac{J\otimes J}{\rho}}+\nabla P(\rho)+\rho\nabla V
+f(\sqrt{\rho}, J, \nabla\sqrt{\rho})
=\frac{\hbar^2}{2}\rho\nabla\pare{\frac{\Delta\sqrt{\rho}}{\sqrt{\rho}}}\\
-\Delta V=\rho-C(x),
\end{array}\right.
\end{equation}
where $t\geq0$, $x\in\R^3$, with initial data
\begin{equation}\label{eq:QHD_IV}
\rho(0)=\rho_0,\quad J(0)=J_0.
\end{equation}
The unknowns $\rho, J$ represent the charge and the current densities, respectively, $P(\rho)$ is the classical pressure, which we assume to satisfy the usual 
$\gamma-$law, $P(\rho)=\frac{p-1}{p+1}\rho^{\frac{p+1}{2}}$, with $p$ to be specified later. $V$ is the self-consistent electric potential, given by the Poisson equation. The term 
$\frac{\hbar^2}{2}\rho\nabla\pare{\frac{\Delta\sqrt{\rho}}{\sqrt{\rho}}}$ can be interpreted as the quantum Bohm potential, or as a quantum correction to the pressure. Indeed with some regularity assumption we can write the dispersive term in different ways:
\begin{equation}\label{eq:bohm}
\frac{\hbar^2}{2}\rho\nabla\pare{\frac{\Delta\sqrt{\rho}}{\sqrt{\rho}}}
=\frac{\hbar^2}{4}\diver(\rho\nabla^2\log\rho)
=\frac{\hbar^2}{4}\Delta\nabla\rho-\hbar^2\diver(\nabla\sqrt{\rho}\otimes\nabla\sqrt{\rho}).
\end{equation}
There is a formal analogy  between \eqref{eq:QHD} and the classical fluid mechanics system, in particular  when $\hbar  = 0$, the system  \eqref{eq:QHD}  formally coincides with the (nonhomogeneous) Euler-Poisson incompressible fluid system. 
\newline
The theoretical description of microphysical systems is generally based on the wave mechanics of 
Schr\"odinger , the matrix mechanics of Heisenberg or the path-integral mechanics of  Feynman (see \cite{F}).  Another approach to Quantum mechanics was taken by Madelung and  de Broglie (see \cite{M}), in particular the hydrodynamic theory of quantum mechanics  has been later extended by de Broglie (the idea of  “double solution”) and used as a scheme for quasicausal interpretation of microphysical systems. 
There is an extensive literature (see for example \cite{DGPS}, \cite{L}, \cite{K}, \cite{KB}, and references therein) where  superfluidity phenomena are described by means of quantum hydrodynamic  systems. 
Furthermore, the Quantum Hydrodynamics system is well known in literature since it has been used in modeling semiconductor devices at nanometric scales (see \cite{G}). 
 The hydrodynamical formulation for quantum mechanics is quite useful with respect to other descriptions for semiconductor devices, such as those based on Wigner-Poisson or  Schr\"odinger-Poisson, since kinetic or Schr\"odinger equations are computationally very expensive. For a derivation of the QHD system we refer to \cite{AI}, \cite{JMM}, \cite{GM}, \cite{DGM1}.
\newline
In \eqref{eq:QHD} the term $f(\sqrt{\rho}, J, \nabla\sqrt{\rho})$ represents collisions between electrons in models for semiconductor devices, see for example \cite{BW} where $f=J$. Otherwise, more generally it is used to mimic the interaction of the quantum fluid with an external fluid. In this paper we will treat the case $f=J$, whereas we will consider the more general case in a forthcoming paper.
\newline
We are interested in studying the global existence of weak solutions to the system in the class of finite energy initial data without higher regularity hypotheses or smallness assumptions. Local existence of solutions to \eqref{eq:QHD} is proved in \cite{JMR}, while global existence with further regularity for the initial data is proved in \cite{ML}. For the three dimensional case the problem of proving existence of weak solutions in the space of energy is solved in \cite{AM}.
\newline
There is a formal equivalence between the system \eqref{eq:QHD}
and the following nonlinear Schr\"odinger-Poisson system
\begin{equation}\label{eq:NLS_arg}
 \left\{\begin{array}{l}
 i\hbar\d_t\psi+\frac{\hbar^2}{2}\Delta\psi=|\psi|^{p-1}\psi+V\psi+\tilde V\psi\\
 -\Delta V=|\psi|^2
 \end{array}\right.
 \end{equation}
 where $\tilde V=\1{2i}\log\pare{\frac{\psi}{\conj{\psi}}}$, in particular the hydrodynamic system \eqref{eq:QHD} can be obtained by defining $\rho=|\psi|^2$, 
 $J=\hbar\im\pare{\conj{\psi}\nabla\psi}$ and by computing the related balance laws.
 \newline
 This problem has to face a serious mathematical difficulty connected with the need to solve 
 \eqref{eq:NLS_arg} with the ill-posed potential $\tilde V$. 
 \newline
 On the other hand, in two space dimensions we have to face another mathematical difficulty, which is the singularity of the electrostatic potential $V$. Indeed, the Green's function for the Poisson equation in $\R^2$ reads
 \begin{equation}
 \Phi(x):=-\frac{1}{2\pi}\log|x|,\qquad x\in\R^2,
 \end{equation}
 and consequently the electrostatic potential is implicitly given by the following convolution
 \begin{equation}\label{eq:elec_pot}
V(t, x):=-\frac{1}{2\pi}\int_{\R^2}\log|x-y|\rho(t, y)\de y.
\end{equation}
Hence $V$ could be unbounded at infinity, even if we assume $\rho$ to be sufficiently decaying at infinity. Consequently some additional hypotheses on the electrostatic potential at the initial time are required in order to assure its boundedness and some suitable integrability properties.
\newline
A natural framework to study the existence of the weak solutions to \eqref{eq:QHD} is given by the space of finite energy states. Here the energy associated to \eqref{eq:QHD} is defined by
\begin{equation}\label{eq:en}
E(t):=\int_{\R^2}\frac{\hbar^2}{2}|\nabla\sqrt{\rho(t)}|^2+\mez|\Lambda(t)|^2+f(\rho(t))
+\mez|\nabla V(t)|^2dx,
\end{equation}
where $\Lambda:=J/\sqrt{\rho}$, and $f(\rho)=\frac{2}{p+1}\rho^{\frac{p+1}{2}}$. The function $f(\rho)$ denotes the internal energy, which is related to the pressure through the identity 
$P(\rho)=\rho f'(\rho)-f(\rho)$.
\newline
Therefore our initial data are required to satisfy $E(0)<\infty$ 
and to have finite mass, i.e. $\int\rho_0dx<\infty$, or equivalently (by the Gagliardo-Nirenberg inequality), they are required to be such that
\begin{equation}
\sqrt{\rho_0}\in H^1(\R^2)\quad\textrm{and}\quad \Lambda_0:=J_0/\sqrt{\rho_0}\in L^2(\R^2).
\end{equation}
This paper is organized as follows: in Section 2 we present the main theorem, and the notations and main mathematical tools used throughout the paper. In Section 3 we recall the polar decomposition of a wave function into its amplitude and a unitary factor, as presented in \cite{AM}. In Section 4 we give a result of global existence of solutions to a nonlinear Schr\"odinger-Poisson system in two dimensions, and explain its equivalence with the QHD system \eqref{eq:QHD} without a collision term. In Section 5 we present how to construct a sequence of approximate solutions to \eqref{eq:QHD}, and show the consistency for this approximate solutions. In Section 6 we prove some a priori bounds for the sequence of approximate solutions in order to pass to the limit, prove the convergence and showing the existence of weak solutions to \eqref{eq:QHD}.
\section{Statement of the main theorem and preliminaries}
First of all, let us recall the definition of a weak solution to \eqref{eq:QHD}, \eqref{eq:QHD_IV}.
 \begin{definition}
 We say the pair $(\rho,J)$ is a \emph{weak solution} to the Cauchy problem \eqref{eq:QHD}, 
 \eqref{eq:QHD_IV} in $[0, T)\times\R^2$ with locally integrable initial data $(\rho_0, J_0)$,
  if there exist $\sqrt{\rho}, \Lambda$, such that 
$\sqrt{\rho}\in L^2_{loc}([0, T);H^1_{loc}(\R^2))$, 
 $\Lambda\in L^2_{loc}([0, T);L^2_{loc}(\R^2))$ and
  by defining $\rho:=(\sqrt{\rho})^2$, $J:=\sqrt{\rho}\Lambda$, one has
   \begin{itemize}
 \item for any test function 
  $\eta\in\cfun_0^\infty([0,T)\times\R^2)$ we have
 \begin{equation}\label{eq:QHD1}
 \int_0^T\int_{\R^2}\rho\d_t\eta+J\cdot\nabla\eta dxdt+\int_{\R^2}\rho_0\eta(0)dx=0;
 \end{equation}
 \item for any test function $\zeta\in\cfun_0^\infty([0,T)\times\R^2;\R^2)$
 \begin{multline}\label{eq:QHD2}
 \int_0^T\int_{\R^2}J\cdot\d_t\zeta+\Lambda\otimes\Lambda:\nabla\zeta+P(\rho)\diver\zeta
 -\rho\nabla V\cdot\zeta-J\cdot\zeta\\
 +\hbar^2\nabla\sqrt{\rho}\otimes\nabla\sqrt{\rho}:\nabla\zeta
 -\frac{\hbar^2}{4}\rho\Delta\diver\zeta dxdt+\int_{\R^2}J_0\cdot\zeta(0)dx=0;
 \end{multline}
 \item \emph{(generalized irrotationality condition)} the identity 
 \begin{equation}\label{eq:irrot}
 \nabla\wedge J=2\nabla\sqrt{\rho}\wedge\Lambda
 \end{equation}
 holds in the sense of distributions.
 \end{itemize}
 \end{definition}
 \begin{remark}
 Suppose we are in the smooth case, so that we can factorize $J=\rho u$, for some current velocity field $u$, then the last condition \eqref{eq:irrot} simply means $\rho\nabla\wedge u=0$, i.e. the current velocity $u$ is irrotational in 
 $\rho\de x$. This is why we will call it \emph{generalized irrotationality condition}.
 \end{remark}
 \begin{definition}
 We say that the weak solution $(\rho, J)$ to the Cauchy problem \eqref{eq:QHD}, \eqref{eq:QHD_IV} is a \emph{finite energy weak solution} (FEWS) in $[0, T)\times\R^2$, if in addition the energy \eqref{eq:en} is finite for almost every $t\in[0, T)$.
 \end{definition}
The hydrodynamic structure of the system \eqref{eq:QHD}, \eqref{eq:QHD_IV} should not lead to conclude that its solutions behave like classical fluids, indeed the connection with Schr\"odinger equations suggests that $(\rho_0, J_0)$ should in any case be seen as momenta related to some wave function $\psi_0$. The main result of this paper is the global existence of FEWS by assuming only the initial data $(\rho_0, J_0)$ are momenta of a wave function $\psi_0\in H^1(\R^2)$, without smallness or smoothness additional conditions. Furthermore, as already pointed out above, because of the singularity of the electrostatic potential, we have to make an additional assumption on the initial mass density $\rho_0$. We can now state the main theorem of this paper.
\begin{theorem}\label{thm:2D_QHD}
For any $\psi_0\in H^1(\R^2)$ let us denote
\begin{equation*}
\rho_0=|\psi_0|^2,\qquad J_0=\hbar\im(\conj{\psi_0}\nabla\psi_0),
\end{equation*}
and furthermore assume that
\begin{equation}
\int_{\R^2}\rho_0\log\rho_0dx<\infty,
\end{equation}
and
\begin{equation}
V(0,x):=-\1{2\pi}\int_{\R^2}\log|x-y|\rho_0(y)dy\quad\in L^r(\R^2),
\end{equation}
for some $2<r<\infty$. Then there exists a FEWS of \eqref{eq:QHD}, \eqref{eq:QHD_IV}, in $[0, T)\times\R^2$.
\end{theorem}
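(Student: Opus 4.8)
The plan is to exploit the formal equivalence with the Schr\"odinger--Poisson system \eqref{eq:NLS_arg} and to construct the FEWS as the limit of a time-splitting (fractional step) scheme, along the lines developed for the three dimensional case in \cite{AM}, but carefully tracking the two dimensional difficulties caused by the logarithmic Poisson kernel. The starting point is the polar decomposition of Section 3: any $\psi\in H^1(\R^2)$ factorizes as $\psi=\sqrt{\rho}\,\phi$ with $\rho=|\psi|^2$ and a unitary factor $\phi$ (defined so that $\phi=\psi/\sqrt{\rho}$ where $\rho>0$), which permits defining $\sqrt{\rho}\in H^1$ and $\Lambda=\im(\conj{\phi}\nabla\psi)\in L^2$ without ever introducing a velocity field on the vacuum set $\{\rho=0\}$. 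For the given data this yields $\sqrt{\rho_0}\in H^1(\R^2)$, $\Lambda_0\in L^2(\R^2)$ and, via the stated hypotheses $\int\rho_0\log\rho_0\,dx<\infty$ and $V(0)\in L^r$, a finite initial energy $E(0)<\infty$.

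Second, I would establish global well-posedness in $H^1(\R^2)$ for the Schr\"odinger--Poisson system obtained by dropping the ill-posed phase potential $\tilde V$, namely $i\hbar\d_t\psi+\frac{\hbar^2}{2}\Delta\psi=|\psi|^{p-1}\psi+V\psi$ with $-\Delta V=|\psi|^2$, for the admissible range of $p$. Here the genuinely two dimensional obstacle appears: the convolution \eqref{eq:elec_pot} with the logarithmic kernel is neither bounded nor sign definite, so the Poisson energy $\frac12\norma{\nabla V}_{L^2}^2$ must be controlled through the entropy bound on $\rho\log\rho$ together with the hypothesis $V(0)\in L^r$, propagated in time. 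I would run a fixed point argument in a Strichartz space, using mass and energy conservation to upgrade local to global existence, and I would record that the flow preserves $H^1$ regularity and the finiteness of $E(t)$.

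Third, I would build the approximating sequence $(\rho^\tau,J^\tau)$ by splitting the dynamics into two elementary steps on each slab $[k\tau,(k+1)\tau)$: the Schr\"odinger--Poisson flow above, followed by an update step that accounts for the collision term $f=J$ by acting only through the polar factor, so that $\rho$ is unchanged while $\Lambda$ is damped, the whole construction being consistent by the polar decomposition machinery of Section 3. Reassembling $\sqrt{\rho^\tau}=|\psi^\tau|$, $\Lambda^\tau=\im(\conj{\phi^\tau}\nabla\psi^\tau)$, $\rho^\tau=(\sqrt{\rho^\tau})^2$ and $J^\tau=\sqrt{\rho^\tau}\,\Lambda^\tau$, I would show that $(\rho^\tau,J^\tau)$ solves \eqref{eq:QHD1}--\eqref{eq:QHD2} up to an error that vanishes as $\tau\to0$, and that the generalized irrotationality condition \eqref{eq:irrot} holds exactly at each step.

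Finally comes the passage to the limit, which I expect to be the main obstacle. The energy bound gives $\sqrt{\rho^\tau}$ bounded in $L^\infty_tH^1_x$ and $\Lambda^\tau$ bounded in $L^\infty_tL^2_x$ uniformly in $\tau$; these alone do not suffice to pass to the limit in the quadratic terms $\Lambda^\tau\otimes\Lambda^\tau$ and $\nabla\sqrt{\rho^\tau}\otimes\nabla\sqrt{\rho^\tau}$. To gain the needed compactness I would invoke the uniform (in $\tau$) Strichartz estimates and the local smoothing property announced in the abstract, which upgrade the spatial integrability of $\sqrt{\rho^\tau}$ and yield strong $L^2_{loc}$ convergence of $\sqrt{\rho^\tau}$ and $\Lambda^\tau$; combined with an Aubin--Lions argument in time this lets me pass to the limit in all the nonlinear terms. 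The delicate point specific to two dimensions is the convergence of the potential term $\rho^\tau\nabla V^\tau$: here I would again lean on the hypotheses $V(0)\in L^r$ and $\int\rho_0\log\rho_0\,dx<\infty$ to keep $\nabla V^\tau$ under control and to justify the limit, thereby producing a finite energy weak solution of \eqref{eq:QHD}, \eqref{eq:QHD_IV}.
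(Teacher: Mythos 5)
Your outline reproduces the paper's architecture step for step (polar decomposition, global $H^1$ theory for the Schr\"odinger--Poisson system via the logarithmic Sobolev inequality and the hypothesis $V(0)\in L^r$, fractional step scheme, consistency, then compactness via uniform Strichartz, local smoothing and an Aubin--Lions type theorem), but it contains one genuine gap, and it sits exactly where you locate the main difficulty. You propose to implement the collision term $f=J$ by updating ``only through the polar factor'', i.e.\ by setting $\psi^\tau(k\tau+)=(\phi^\tau_k)^{1-\tau}\sqrt{\rho^\tau}(k\tau-)$; this is precisely the update \eqref{eq:88}, which the paper introduces and then explicitly rejects. The uniform-in-$\tau$ Strichartz and local smoothing bounds on which your last paragraph relies are derived from the Duhamel-type representation \eqref{eq:131}, and that representation requires at each jump time an expansion of the form $\nabla\psi^\tau(k\tau+)=\nabla\psi^\tau(k\tau-)-i\frac{\tau}{\hbar}\phi^\ast\Lambda^\tau(k\tau-)+r_{k,\tau}$ with $\|r_{k,\tau}\|_{L^2}\lesssim\tau$. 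With the exact polar update this expansion is unavailable: the polar factor is merely an $L^\infty$ function, uniquely determined only $\sqrt{\rho}\,\de x$-a.e.\ and with no differentiability across the nodal set, so $\nabla\bigl((\phi^\tau_k)^{1-\tau}\sqrt{\rho^\tau}\bigr)$ cannot be expanded; as Remark \ref{rmk:25} shows, iterating Duhamel then leaves one with products of the free group $U(\cdot)$ and the multipliers $\sigma^\tau_{k\tau}=(\phi^\tau_{k\tau})^{-\tau}$, and the commutators $[U(\cdot),\sigma^\tau]$ cannot be estimated for $\sigma^\tau\in L^\infty$ only. Without \eqref{eq:131} you lose the uniform Strichartz bound, hence the $L^2_t H^{1/2}_{loc}$ smoothing estimate, hence the strong convergence needed for the quadratic terms $\Lambda^\tau\otimes\Lambda^\tau$ and $\nabla\sqrt{\rho^\tau}\otimes\nabla\sqrt{\rho^\tau}$.

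The missing idea is the regularized update of Lemma \ref{lemma:updat}: approximate $\psi^\tau(k\tau-)$ in $H^1$ by smooth $\psi_n$, whose polar factor is $e^{i\theta_n}$ with $\theta_n$ piecewise smooth on the open set $\{\psi_n\neq0\}$, and define $\psi^\tau(k\tau+)=e^{i(1-\tau)\theta_n}\sqrt{\rho_n}$ with accuracy $\eps=\tau2^{-k}\|\psi_0\|_{H^1(\R^2)}$. The price is that the hydrodynamic update laws hold only up to remainders $R_k$ and $r_{k,\tau}$ with geometrically decaying $L^2$ norms, so your claim that the update relations and the generalized irrotationality condition hold \emph{exactly} at each step must be weakened: they hold up to these controlled errors, which the consistency argument (Theorem \ref{thm:cons}) and the discrete energy inequality then absorb as $o(1)$ and $O(\tau)$ contributions. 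With this replacement your sketch matches the paper's proof; the remaining points you flag (propagation of $V\in L^\infty_tL^r_x$ via the continuity equation and the Hardy--Littlewood--Sobolev inequality, and the Rakotoson--Temam compactness, Theorem \ref{thm:comp}) are handled exactly as you suggest.
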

\subsection{Notations}
For convenience of the reader we will recall some notations which will be used in the sequel.
\newline
If $X$, $Y$ are two quantities (typically non-negative), we use $X\lesssim Y$ to denote $X\leq CY$, for some absolute constant $C>0$.
\newline
We will use the standard Lebesgue norms for complex-valued measurable functions $f:\R^d\to\C$ 
\begin{equation*}
\|f\|_{L^p(\R^d)}:=\pare{\int_{\R^d}|f(x)|^p\de x}^{\1{p}}.
\end{equation*}
If we replace $\C$ by a Banach space $X$, we will adopt the notation
\begin{equation*}
\|f\|_{L^p(\R^d;X)}:=\pare{\int_{\R ^d}\|f(x)\|_{X}\de x}^{1/p}
\end{equation*}
to denote the norm of $f:\R^d\to X$. In particular, if X is a Lebesgue space $L^r(\R^n)$, and $d=1$, we will shorten the notation by writing
\begin{equation*}
\|f\|_{L^q_tL^r_x(I\times\R^n)}:=\pare{\int_I\|f(t)\|_{L^r(\R^n)}^q\de t}^{1/q}
=\pare{\int_I(\int_{\R^n}|f(t,x)|^r\de x)^{q/r}\de t}^{1/q}
\end{equation*}
to denote the mixed Lebesgue norm of $f:I\to L^r(\R^n)$; moreover, we will write 
$L^q_tL^r_x(I\times\R^n):=L^q(I;L^r(\R^n))$.
\newline
For $s\in\R$ we will define the Sobolev space $H^s(\R^n):=(1-\Delta)^{-s/2}L^2(\R^n)$.
 \subsection{Nonlinear Schr\"odinger equations setting}
\begin{definition}[see \cite{Caz}]
We say that $(q, r)$ is an \emph{admissible pair} of exponents if $2\leq q\leq\infty, 2\leq r<\infty$, and
\begin{equation}
\frac{1}{q}+\frac{1}{r}=\frac{1}{2}.
\end{equation}
\end{definition}
Now we will introduce the \emph{Strichartz norms}. For more details, we refer the reader to \cite{T}. Let $I\times\R^2$ be a space-time slab, we define the Strichartz norm $\dot S^0(I\times\R^2)$
\begin{equation*}
\|u\|_{\dot S^0}(I\times\R^2):=\sup\|u\|_{L^q_tL^r_x(I\times\R^2)},
\end{equation*}
where the $\sup$ is taken over all the admissible pairs $(q, r)$. For any 
$k\geq1$, we can define
\begin{equation*}
\|u\|_{\dot S^k(I\times\R^2)}:=\|\nabla^ku\|_{\dot S^0(I\times\R^2)}.
\end{equation*}
 \begin{theorem}[Keel, Tao \cite{KT}]\label{thm:strich}
 Let $(q,r), (\tilde q, \tilde r)$ be two arbitrary admissible pairs of exponents, and let $U(\cdot)$ be the free Schr\"odinger group. Then we have
 \begin{align}
 \|U(t)f\|_{L^q_tL^r_x}&\lesssim\|f\|_{L^2(\R^2)}\\
 \|\int_{s<t}U(t-s)F(s)\de s\|_{L^q_tL^r_x}&\lesssim\|F\|_{L^{\tilde q'}_tL^{\tilde r'}_x}\\
 \|\int U(t-s)F(s)\de s\|_{L^q_tL^r_x}&\lesssim\|F\|_{L^{\tilde q'}_tL^{\tilde r'}_x}.
 \end{align}
 \end{theorem}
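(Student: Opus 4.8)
The plan is to recover these three inequalities from two elementary ingredients by the classical $TT^*$ method, exploiting the fact that in $\R^2$ every admissible pair (as defined above, with $r<\infty$) satisfies $q>2$, so that the delicate endpoint $(q,r)=(2,\infty)$ never occurs. The two ingredients are the mass conservation $\norma{U(t)f}_{L^2(\R^2)}=\norma{f}_{L^2(\R^2)}$, immediate from the fact that $U(t)$ is a Fourier multiplier of modulus one, and the pointwise dispersive estimate $\norma{U(t)f}_{L^\infty(\R^2)}\lesssim|t|^{-1}\norma{f}_{L^1(\R^2)}$, which follows from the explicit Gaussian kernel of $U(t)$ in dimension two. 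Interpolating (Riesz--Thorin) between these bounds gives, for every $2\le r\le\infty$ and $t\neq0$, the fixed-time estimate $\norma{U(t)f}_{L^r(\R^2)}\lesssim|t|^{-(1-2/r)}\norma{f}_{L^{r'}(\R^2)}$, where $1-2/r=2/q$ by admissibility.

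First I would establish the untruncated inhomogeneous estimate in the diagonal case $(\tilde q,\tilde r)=(q,r)$. By Minkowski's inequality and the fixed-time estimate, $\norma{\int U(t-s)F(s)\de s}_{L^r_x}\le\int|t-s|^{-2/q}\norma{F(s)}_{L^{r'}_x}\de s$, and then the one-dimensional Hardy--Littlewood--Sobolev inequality in the time variable — applicable precisely because $0<2/q<1$ and $1<q'<q<\infty$ — yields $\norma{\int U(t-s)F(s)\de s}_{L^q_tL^r_x}\lesssim\norma{F}_{L^{q'}_tL^{r'}_x}$. Writing $T\colon f\mapsto U(\cdot)f$, this is exactly the statement that $TT^*$ is bounded from $L^{q'}_tL^{r'}_x$ to $L^q_tL^r_x$; since the domain of $T$ is the Hilbert space $L^2$, the identity $\norma{T}^2=\norma{TT^*}$ produces both the homogeneous bound $\norma{U(t)f}_{L^q_tL^r_x}\lesssim\norma{f}_{L^2}$ (the first inequality) and its dual $\norma{\int U(-s)F(s)\de s}_{L^2}\lesssim\norma{F}_{L^{q'}_tL^{r'}_x}$.

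Next I would upgrade to arbitrary, possibly distinct, admissible pairs. Factoring $\int U(t-s)F(s)\de s=U(t)\int U(-s)F(s)\de s$ via the group law $U(t-s)=U(t)U(-s)$, and chaining the dual estimate for $(\tilde q,\tilde r)$ with the homogeneous estimate for $(q,r)$, gives the third inequality $\norma{\int U(t-s)F(s)\de s}_{L^q_tL^r_x}\lesssim\norma{F}_{L^{\tilde q'}_tL^{\tilde r'}_x}$ for all admissible $(q,r)$ and $(\tilde q,\tilde r)$. Finally, to obtain the retarded (causal) estimate — the second inequality — I would pass from the full integral to the half-line integral $\int_{s<t}$ by the Christ--Kiselev lemma; this is legitimate because the lemma requires the outer exponent to strictly exceed the inner one, i.e. $\tilde q'<q$, which holds automatically since admissibility in $\R^2$ forces $q,\tilde q>2$ and hence $\tilde q'<2<q$.

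The step I expect to be the crux is the passage from the untruncated to the retarded estimate. In the genuine endpoint case $q=\tilde q=2$ the Christ--Kiselev argument breaks down (one has $\tilde q'=q$) and one is forced into the refined bilinear real-interpolation machinery of Keel--Tao; here, however, the hypothesis $r<\infty$ removes the endpoint, so the strict inequality $\tilde q'<q$ is available throughout and the non-endpoint argument closes the proof. Equivalently, the whole statement is the Keel--Tao abstract theorem specialized to dispersion exponent $\sigma=1$, with the forbidden pair $(2,\infty)$ excluded by the standing restriction $r<\infty$.
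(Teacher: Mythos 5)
Your argument is correct, but be aware that the paper contains no proof of this statement to compare against: Theorem~\ref{thm:strich} is quoted as a black box from Keel--Tao \cite{KT}, and the paper's use of it is purely as a citation. Your self-contained route is the classical non-endpoint scheme (Ginibre--Velo style): dispersive estimate plus mass conservation, Riesz--Thorin, Hardy--Littlewood--Sobolev in time for the diagonal untruncated estimate, the $TT^*$ identity for the homogeneous bound and its dual, the group law $U(t-s)=U(t)U(-s)$ to mix distinct pairs, and Christ--Kiselev for the retarded estimate. Your key structural observation is exactly right and worth stressing: the paper's definition of admissibility in $\R^2$ imposes $r<\infty$, hence $q>2$ by the scaling relation $\1{q}+\1{r}=\mez$, so the forbidden endpoint $(2,\infty)$ never occurs and the full bilinear real-interpolation machinery of \cite{KT} is never actually needed for this paper --- the citation buys generality (abstract dispersive setting, the endpoint $(2,\frac{2d}{d-2})$ in dimensions $d\geq3$), while your proof buys elementary self-containedness. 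One small caveat: your appeals to HLS and to Christ--Kiselev tacitly assume the time exponents are finite and strictly ordered, so the extreme admissible pair $(\infty,2)$ needs separate (but immediate) treatment: when $(q,r)=(\infty,2)$ the exponent $2/q$ vanishes and HLS is vacuous, but the untruncated diagonal bound follows from Minkowski's inequality and unitarity of $U(t)$ on $L^2$, and the retarded bound $\|\int_{s<t}U(t-s)F(s)\,\de s\|_{L^\infty_tL^2_x}\lesssim\|F\|_{L^{\tilde q'}_tL^{\tilde r'}_x}$ follows by applying the dual homogeneous estimate to $F\,\chi_{\{s<t\}}$ uniformly in $t$; likewise $(\tilde q,\tilde r)=(\infty,2)$, i.e.\ $\tilde q'=1$, is handled directly by Minkowski. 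With that remark your proof is complete and is the standard modern derivation of the statement as used here.
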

By using the previous Theorem, we can state the following Lemma about some further integrability properties for solutions of the Schr\"odinger equations.
 \begin{lemma}
Let $I$ be a compact interval, and let $u:I\times\R^2\to\C$ be a Schwartz solution to the 
Schr\"odinger equation
\begin{equation*}
i\d_tu+\Delta u=F_1+\dotsc+F_M,
\end{equation*}
for some Schwartz functions $F_1, \dotsc, F_M$. Then we have
\begin{equation*}
\|u\|_{\dot S^0(I\times\R^n)}\lesssim\|u(t_0)\|_{L^2(\R^n)}+\|F_1\|_{L^{q_1'}_tL^{r_1'}_x(I\times\R^2)}+\dotsc
+\|F_M\|_{L^{q_M'}_tL^{r_M'}_x(I\times\R^2)},
\end{equation*}
where $(q_1, r_1),\dotsc, (q_M, r_M)$ are arbitrary admissible pairs of exponents.
\end{lemma}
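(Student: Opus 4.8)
The plan is to reduce everything to the three inequalities of Theorem \ref{thm:strich} by means of Duhamel's formula. Since $u$ is a Schwartz solution of $i\d_t u+\Delta u=F_1+\dotsc+F_M$, fixing a base point $t_0\in I$ the variation of constants formula gives
\[
u(t)=U(t-t_0)u(t_0)-i\int_{t_0}^t U(t-s)\pare{F_1(s)+\dotsc+F_M(s)}\,\de s,
\]
where $U(\cdot)$ is the free Schr\"odinger group. Because $u(t_0)$ and the $F_j$ are Schwartz, every integral converges classically, so there is no regularity issue to address and the identity can be differentiated directly to confirm it solves the equation on $I$.

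Next I would fix an \emph{arbitrary} admissible output pair $(q,r)$ and estimate $\|u\|_{L^q_tL^r_x(I\times\R^2)}$ term by term. The homogeneous contribution is handled by the first inequality of Theorem \ref{thm:strich}, giving $\|U(t-t_0)u(t_0)\|_{L^q_tL^r_x}\lesssim\|u(t_0)\|_{L^2(\R^2)}$ with a constant independent of $(q,r)$. For the inhomogeneous part I would split the integral by linearity into the $M$ pieces $\int_{t_0}^t U(t-s)F_j(s)\,\de s$ and apply the retarded estimate (the second inequality of the theorem) to each piece, choosing the dual admissible pair $(\tilde q,\tilde r)=(q_j,r_j)$ adapted to $F_j$. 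This yields $\|\int_{t_0}^t U(t-s)F_j(s)\,\de s\|_{L^q_tL^r_x}\lesssim\|F_j\|_{L^{q_j'}_tL^{r_j'}_x(I\times\R^2)}$, again with a constant not depending on the output pair.

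Summing the $M+1$ contributions by the triangle inequality and then taking the supremum over all admissible output pairs $(q,r)$ — legitimate precisely because every bound on the right-hand side is uniform in $(q,r)$ — produces the claimed estimate for $\|u\|_{\dot S^0(I\times\R^2)}$.

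The only genuine technical point, rather than a real obstacle, concerns the base point $t_0$: the retarded estimate is stated for integration from the past, i.e. $\int_{s<t}$, whereas the Duhamel integral runs from the interior point $t_0$. When $t_0$ is an endpoint of $I$ the two agree after extending the $F_j$ by zero outside $I$; for an interior $t_0$ one splits $I$ at $t_0$ into a forward and a backward half, treating the backward half by the time-reversal symmetry of the Schr\"odinger group, which recasts it as a retarded integral of the same form. In either case the estimate is unaffected and the constants stay uniform in $(q,r)$, which is what the supremum defining $\dot S^0$ requires.
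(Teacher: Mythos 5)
Your argument is correct and is exactly the proof the paper intends: the lemma is presented there as an immediate consequence of Theorem \ref{thm:strich} (Keel--Tao) with no further detail, and your Duhamel decomposition followed by the homogeneous estimate for $U(t-t_0)u(t_0)$ and the retarded estimate applied to each $F_j$ with its own dual pair $(q_j',r_j')$ is the standard way to fill that in. Your closing remark on the base point $t_0$ (extension by zero, time reversal for the backward half) correctly disposes of the only technical point in matching Duhamel's formula to the $\int_{s<t}$ form of the retarded estimate.
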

 Furthermore, the solutions to the Schr\"odinger equation enjoy some nice local smoothing properties; there are various results in the literature regarding this property, here we recall a theorem, due to Constatin and Saut \cite{CS} which actually covers a more general setting. In particular it can been shown the solutions of the Schr\"odinger equation are locally more regular than their initial data. Close results have been proved by P. Sj\"olin \cite{Sj} and L. Vega \cite{V}.
 \begin{theorem}[Constantin, Saut \cite{CS}]\label{thm:smooth1}
Let $u$ solves the free Schr\"odinger equation
\begin{equation}
\left\{\begin{array}{l}
i\d_tu+\Delta u=0\\
u(0)=u_0.
\end{array}\right.
\end{equation}
Let $\chi\in\cfun^\infty_0(\R^{1+2})$ of the form
\begin{equation*}
\chi(t,x)=\chi_0(t)\chi_1(x_1)\chi_2(x_2),
\end{equation*}
with $\chi_j\in\cfun^\infty_0(\R)$. Then we have
\begin{equation*}
\int_{\R^{1+2}}\chi^2(t,x)|(I-\Delta)^{1/4}u(t,x)|^2\de x\de t\leq C^2\|u_0\|_{L^2(\R^2)}.
\end{equation*}
In particular, if $u_0\in L^2(\R^2)$, one has for all $T>0$ 
\begin{equation*}
u\in L^2([0,T];H^{1/2}_{loc}(\R^2)).
\end{equation*}
\end{theorem}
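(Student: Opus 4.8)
The plan is to prove this by a direct Fourier computation: a Plancherel identity in the time variable (after straightening the Schr\"odinger phase by the substitution $\eta=\modu{\xi}^2$), followed by a Schur test in the angular variable that exploits the curvature of the circles $\{\modu{\xi}=\rho\}$. First I would reduce to the case of Schwartz $u_0$, which suffices by density since the asserted bound is linear and uniform. Since $\chi_0\in L^\infty$, I bound $\chi_0^2\le\norma{\chi_0}_{L^\infty}^2$ and discard it, so that it is enough to prove the \emph{stronger} (time-global) estimate
\begin{equation*}
\int_{\R}\int_{\R^2}\chi_1(x_1)^2\chi_2(x_2)^2\,\modu{(I-\Delta)^{1/4}u(t,x)}^2\,\de x\,\de t\lesssim\norma{u_0}_{L^2(\R^2)}^2 ,
\end{equation*}
which is the quantitative content of local smoothing (the right-hand side of the theorem is quadratic in $u_0$ by homogeneity).

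Next I would carry out the Plancherel step at fixed $x$. Writing $(I-\Delta)^{1/4}u(t,x)=c\int_{\R^2}e^{ix\cdot\xi}e^{-it\modu{\xi}^2}(1+\modu{\xi}^2)^{1/4}\hat u_0(\xi)\,\de\xi$, I pass to polar coordinates $\xi=\rho\,\omega(\theta)$ with $\omega(\theta)=(\cos\theta,\sin\theta)$ and write $\hat u_0(\rho,\theta):=\hat u_0(\rho\,\omega(\theta))$. The substitution $\eta=\rho^2$ (so $\rho\,\de\rho=\tfrac12\de\eta$) converts the Schr\"odinger phase $e^{-it\modu{\xi}^2}$ into the pure time oscillation $e^{-it\eta}$, exhibiting $t\mapsto(I-\Delta)^{1/4}u(t,x)$ as the inverse time-Fourier transform of $\mathbf{1}_{\eta>0}(1+\eta)^{1/4}F_x(\eta)$. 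Plancherel in $t$ then gives
\begin{equation*}
\int_{\R}\modu{(I-\Delta)^{1/4}u(t,x)}^2\,\de t = c\int_0^\infty (1+\eta)^{1/2}\,\modu{F_x(\eta)}^2\,\de\eta,\quad F_x(\eta)=\int_0^{2\pi}e^{i\sqrt\eta\,x\cdot\omega(\theta)}\hat u_0(\sqrt\eta,\theta)\,\de\theta .
\end{equation*}

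Then I would integrate in $x$ against $\chi_1^2\chi_2^2$ and expand the square. The $x$-integration produces the Fourier transform of the Schwartz function $\chi_1^2\chi_2^2$ evaluated at the chord $\sqrt\eta(\omega(\theta)-\omega(\theta'))$; denoting this rapidly decaying kernel by $K$, a Schur test in the angular variables yields
\begin{equation*}
\int_{\R^2}\chi_1^2\chi_2^2\int_{\R}\modu{(I-\Delta)^{1/4}u}^2\,\de t\,\de x\lesssim\int_0^\infty (1+\eta)^{1/2}S(\eta)\int_0^{2\pi}\modu{\hat u_0(\sqrt\eta,\theta)}^2\,\de\theta\,\de\eta ,
\end{equation*}
where $S(\eta)=\sup_\theta\int_0^{2\pi}\modu{K(\sqrt\eta(\omega(\theta)-\omega(\theta')))}\,\de\theta'$ (the two Schur suprema coincide because $\modu{K}$ is even). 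Since $\modu{\omega(\theta)-\omega(\theta')}=2\modu{\sin\tfrac{\theta-\theta'}{2}}\approx\modu{\theta-\theta'}$ near the diagonal and $K$ is rapidly decreasing, the change of variable $s=\sqrt\eta\,(\theta-\theta')$ gives the curvature gain $S(\eta)\lesssim\min(1,\eta^{-1/2})$. Hence $(1+\eta)^{1/2}S(\eta)\lesssim 1$ for every $\eta>0$, and undoing $\eta=\rho^2$ turns the right-hand side into a constant multiple of $\norma{\hat u_0}_{L^2}^2\simeq\norma{u_0}_{L^2}^2$, proving the estimate. The final assertion $u\in L^2([0,T];H^{1/2}_{loc}(\R^2))$ then follows by choosing the cutoffs equal to one on $[0,T]\times B_R$ for arbitrary $R>0$.

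I expect the main obstacle to be the \emph{sharp balance} in the last step: the estimate is an endpoint, in which the half-derivative weight $(1+\eta)^{1/2}$ is exactly cancelled by the decay $S(\eta)\sim\eta^{-1/2}$ coming from the curvature of the circle $\{\modu{\xi}=\sqrt\eta\}$ through the Schwartz decay of $\widehat{\chi_1^2\chi_2^2}$. Obtaining this cancellation with no loss — rather than only a $\eta^{\varepsilon}$ or logarithmic loss — is the crux, and is precisely where the smoothness of the product cutoff and the genuine two-dimensional geometry enter; a one-dimensional reduction via $e^{it\Delta}=e^{it\d_{x_1}^2}e^{it\d_{x_2}^2}$ fails here because the data fed into the $x_1$-flow itself depends on $t$ through the $x_2$-flow, so the global-in-time Plancherel argument above is what handles the coupling cleanly.
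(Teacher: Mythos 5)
The paper does not prove this statement at all: it is quoted verbatim as a known result of Constantin and Saut \cite{CS} (with close relatives in Sj\"olin \cite{Sj} and Vega \cite{V}), so there is no internal proof to compare against. Your argument is a correct, self-contained proof, and it is closer in spirit to Vega's approach than to Constantin--Saut's original one: after reducing to Schwartz data and discarding $\chi_0$, the polar substitution $\eta=\rho^2$, Plancherel in $t$, and a Schur test with the rapidly decaying kernel $\widehat{\chi_1^2\chi_2^2}\bigl(\sqrt{\eta}\,(\omega(\theta)-\omega(\theta'))\bigr)$ give $S(\eta)\lesssim\min(1,\eta^{-1/2})$, and the endpoint balance $(1+\eta)^{1/2}S(\eta)\lesssim1$ closes the estimate with no loss; the Jacobian $\rho\,\de\rho=\tfrac12\de\eta$ then returns $\|\hat u_0\|_{L^2}^2$. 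The chain of reductions is legitimate (the global-in-time weighted estimate is proved directly, so there is no circularity, and the antipodal region $\theta'\approx\theta+\pi$ contributes only $\langle\sqrt{\eta}\rangle^{-N}$). Constantin--Saut instead treat general dispersive symbols $P(\xi)$ by splitting frequency space into sectors where one partial derivative $\d_{\xi_j}P$ dominates and performing a one-dimensional change of variables in that coordinate, which is precisely why their hypothesis requires the cutoff to be a \emph{product} $\chi_0(t)\chi_1(x_1)\chi_2(x_2)$; their method generalizes beyond $|\xi|^2$, while yours is shorter and gives the sharp constant structure for the Schr\"odinger case. One correction to your closing commentary: in your proof the product structure of the spatial cutoff plays no role whatsoever --- any nonnegative weight whose Fourier transform decays rapidly works, since only the Schwartz decay of $\widehat{\chi_1^2\chi_2^2}$ is used --- so the product form is an artifact of the Constantin--Saut method, not a genuine ingredient of the two-dimensional estimate. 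Finally, your homogeneity remark is right: the right-hand side in the paper's statement should read $C^2\|u_0\|_{L^2(\R^2)}^2$, the missing square being a typo in the quoted theorem.
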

We have a similar result also for the nonhomogeneous case:
\begin{theorem}[Constantin, Saut \cite{CS}]\label{thm:smooth2}
Let $u$ be the solution of 
\begin{equation}
\left\{\begin{array}{l}
i\d_tu+\Delta u=F\\
u(0)=u_0\in L^2(\R^2),
\end{array}\right.
\end{equation}
where $F\in L^1([0,T];L^2(\R^2))$. Then it follows
\begin{equation*}
u\in L^2([0,T];H^{1/2}_{loc}(\R^2)).
\end{equation*}
Moreover, let $\chi\in\cfun_0^\infty(\R^{1+2})$ be of the form
\begin{equation*}
\chi(t, x)=\chi_0(t)\chi_1(x_1)\chi_2(x_2),
\end{equation*}
with $\chi_j\in\cfun^\infty_0(\R)$, $\supp\chi_0\subset[0, T]$. Then the following local smoothing estimate holds
\begin{multline}
\pare{\int_{\R^{1+2}}\chi^2(t,x)|(I-\Delta)^{1/4}u(t,x)|^2\de x\de t}^{\mez}\\
\leq C\pare{\|u_0\|_{L^2(\R^2)}+\|F\|_{L^1_tL^2_x([0,T]\times\R^2}}.
\end{multline}
\end{theorem}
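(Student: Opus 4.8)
The plan is to reduce the inhomogeneous estimate to the already-available homogeneous local smoothing estimate of Theorem~\ref{thm:smooth1} by means of Duhamel's formula and Minkowski's integral inequality. Writing $U(t)=e^{it\Delta}$ for the free Schr\"odinger group, Duhamel's representation gives
\begin{equation*}
u(t)=U(t)u_0-i\int_0^t U(t-s)F(s)\,\de s.
\end{equation*}
Set $u_{\mathrm{hom}}(t):=U(t)u_0$ and $u_{\mathrm{inh}}(t):=-i\int_0^t U(t-s)F(s)\,\de s$, and introduce the weighted seminorm
\begin{equation*}
\|g\|_Y:=\pare{\int_{\R^{1+2}}\chi^2(t,x)\,|(I-\Delta)^{1/4}g(t,x)|^2\,\de x\,\de t}^{\mez},
\end{equation*}
so that the assertion is $\|u\|_Y\le C\pare{\|u_0\|_{L^2}+\|F\|_{L^1_tL^2_x}}$. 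The contribution of $u_{\mathrm{hom}}$ is exactly the content of Theorem~\ref{thm:smooth1} and yields $\|u_{\mathrm{hom}}\|_Y\le C\|u_0\|_{L^2}$, so the whole matter is the estimate of $u_{\mathrm{inh}}$.

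For the Duhamel term I would view $u_{\mathrm{inh}}$ as a superposition over the source time $s$. For fixed $s\in[0,T]$ define the space-time function $w_s(t,x):=\mathbf{1}_{\{t>s\}}\,[U(t-s)F(s)](x)$, so that $u_{\mathrm{inh}}=-i\int_0^T w_s\,\de s$. Since $\|\cdot\|_Y$ is a genuine seminorm on space-time functions (the operator $(I-\Delta)^{1/4}$ acts only in $x$ and commutes both with $U$ and with the indicator in $t$), Minkowski's integral inequality gives
\begin{equation*}
\|u_{\mathrm{inh}}\|_Y\le\int_0^T\|w_s\|_Y\,\de s.
\end{equation*}
It then suffices to bound $\|w_s\|_Y\le C\|F(s)\|_{L^2}$ with a constant uniform in $s$, after which integration in $s$ produces the factor $\|F\|_{L^1_tL^2_x}$.

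To estimate $\|w_s\|_Y$, I would change the time variable $t\mapsto t'+s$ inside $\|w_s\|_Y^2$; the cutoff becomes $\tilde\chi(t',x):=\chi_0(t'+s)\chi_1(x_1)\chi_2(x_2)$, which is again of the admissible product form required by Theorem~\ref{thm:smooth1}, and $t'\mapsto U(t')F(s)$ is the free evolution of the $L^2$ datum $F(s)$. Applying Theorem~\ref{thm:smooth1} with the cutoff $\tilde\chi$ gives $\|w_s\|_Y\le C(s)\|F(s)\|_{L^2}$. The uniformity of $C(s)$ in $s$ is the one delicate point, and it is settled by the time-translation invariance of the free group together with its unitarity on $L^2$: a further substitution $t''=t'+s$ turns the shifted cutoff back into the original $\chi_0$ while replacing the datum $F(s)$ by $U(-s)F(s)$, whose $L^2$ norm equals $\|F(s)\|_{L^2}$. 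Hence $C(s)$ coincides with the constant of the unshifted estimate and is independent of $s$. Combining the two contributions establishes the stated inequality.

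Finally, the membership $u\in L^2([0,T];H^{1/2}_{loc}(\R^2))$ follows from the weighted estimate: for a ball and a cutoff $\varphi\in\cfun_0^\infty$ equal to $1$ on it one writes $(I-\Delta)^{1/4}(\varphi u)=\varphi(I-\Delta)^{1/4}u+[(I-\Delta)^{1/4},\varphi]u$, where the commutator is a pseudodifferential operator of order $-1/2$, hence bounded on $L^2$; choosing $\chi_1,\chi_2\equiv1$ on the ball and using the elementary energy bound $\|u\|_{L^\infty_tL^2_x}\le\|u_0\|_{L^2}+\|F\|_{L^1_tL^2_x}$ controls $\|\varphi u\|_{L^2_tH^{1/2}_x}$, and exhausting $\R^2$ and $[0,T]$ with uniformly bounded constants gives the claim. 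To summarize, the proof contains no isolated hard estimate beyond Theorem~\ref{thm:smooth1}; the real work is the bookkeeping that legitimizes the reduction, namely checking that $\|\cdot\|_Y$ behaves as a seminorm so that Minkowski applies and, above all, securing a source-time-uniform constant, which rests on the unitarity of $U(-s)$ on $L^2$.
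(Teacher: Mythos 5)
The paper never proves Theorem~\ref{thm:smooth2}: it is quoted from Constantin and Saut \cite{CS} as a black box, and the text only records its consequences \eqref{eq:smooth1}--\eqref{eq:smooth2}. So there is no internal proof to compare with, and your attempt must be judged on its own merits; on those it is correct, and it is the standard derivation of the inhomogeneous smoothing estimate from the homogeneous one. The three pillars all hold up: (i) $\|g\|_Y=\|\chi\,(I-\Delta)^{1/4}g\|_{L^2_{t,x}}$ is the $L^2$ norm of a linear image of $g$, so Minkowski's integral inequality legitimately passes the norm inside the Duhamel superposition; (ii) the sharp truncation $\mathbf{1}_{\{t>s\}}$ is harmless because $(I-\Delta)^{1/4}$ acts only in $x$, so the indicator factors out pointwise in $t$ and is simply bounded by $1$ under the time integral; (iii) the $s$-uniform constant follows from $U(t-s)=U(t)U(-s)$ and unitarity of $U(-s)$ on $L^2$ --- note that your two successive time substitutions compose to the identity, so you could have invoked this one group identity directly, but the redundancy is harmless. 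This Duhamel reduction is also how the inhomogeneous bound is obtained in the literature; the genuinely hard analysis (the homogeneous estimate, which Constantin--Saut prove by Fourier-analytic means) is exactly what you correctly treat as given in Theorem~\ref{thm:smooth1}.

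One small repair is needed in your final paragraph on the membership $u\in L^2([0,T];H^{1/2}_{loc}(\R^2))$. Since the hypothesis forces $\supp\chi_0\subset[0,T]$, a smooth $\chi_0$ must vanish at $t=0$ and $t=T$, so no admissible cutoff can equal $1$ on all of $[0,T]$; the weighted estimate alone therefore gives only $u\in L^2_{loc}((0,T);H^{1/2}_{loc})$. The standard fix is to extend the evolution in time --- extend $F$ by zero outside $[0,T]$ and use the group to define $u$ on, say, $[-1,T+1]$, which leaves $\|u_0\|_{L^2}+\|F\|_{L^1_tL^2_x}$ unchanged --- and then apply the estimate on the larger slab with $\chi_0\equiv1$ on $[0,T]$. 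Your commutator step is fine as stated: $[(I-\Delta)^{1/4},\varphi]$ has order $-1/2$, hence is $L^2$-bounded, and its contribution is controlled by the elementary energy bound. This is a routine bookkeeping correction, not a flaw in the method.
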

This results imply that, the free Schr\"odinger group $U(\cdot)$ fulfills the following inequalities
\begin{gather}
\|U(\cdot)u_0\|_{L^2([0,T];H^{1/2}_{loc})}\lesssim\|u_0\|_{L^2(\R^3)}\label{eq:smooth1}\\
\|\int_0^tU(t-s)F(s)\de s\|_{L^2([0,T];H^{1/2}_{loc}(\R^3))}\lesssim\|F\|_{L^1([0,T];L^2(\R^3))}.
\label{eq:smooth2}
\end{gather}
\subsection{Logarithmic Sobolev inequality and the Gagliardo-Nirenberg inequality in 2D}
As we already pointed out in the introduction, in two dimensions the Poisson electrostatic potential is highly singular, because of its kernel. Hence the need to make the further assumption on the initial datum $\rho_0$, as stated in Theorem \ref{thm:2D_QHD}. A fundamental tool to ensure for the energy to be bounded at all times is the \emph{logarithmic Sobolev inequality}.
\begin{theorem}[Carlen, Loss \cite{CL}]\label{thm:log_Sob}
Let $f$ be a non-negative function in $L^1(\R^2)$ such that $f\log f, f\log(1+|x|^2)\in L^1(\R^2)$. If 
$\int f\de x=M$, then
\begin{multline}
\frac{M}{2}\int_{\R^2}f\log f\de x+
\int_{\R^2\times\R^2}f(x)\log|x-y|f(y)\de x\de y\geq\\
\geq C(M):=\frac{M^2}{2}(1+\log\pi+\log M).
\end{multline}
\end{theorem}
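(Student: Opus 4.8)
The plan is to recover the sharp constant via the competing-symmetries argument, reducing everything to an explicit optimizer.

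First I would normalize the mass. The left-hand side is invariant under the mass-preserving dilation $f(x)\mapsto\lambda^2 f(\lambda x)$, and writing $f=Mg$ with $\int g\,dx=1$ one checks that the claimed bound is equivalent to the $M=1$ statement
\begin{equation*}
\Phi(g):=\mez\int_{\R^2}g\log g\,dx+\iint_{\R^2\times\R^2}g(x)\log|x-y|\,g(y)\,dx\,dy\ \ge\ \mez\pare{1+\log\pi}.
\end{equation*}
The hypotheses $g\log g\in L^1$ and $g\log(1+|x|^2)\in L^1$ guarantee both integrals are finite: splitting $\log|x-y|$ into its near-diagonal and far-field parts, the far part is controlled by the logarithmic moment and the singular part by the $L\log L$ bound. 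The natural candidate for equality is the conformally invariant density $h(x)=\tfrac{1}{\pi(1+|x|^2)^2}$, for which $\int h\,dx=1$ and $\int h\log h\,dx=-\log\pi-2$; a direct evaluation of the potential term then gives $\Phi(h)=\mez(1+\log\pi)$, so it suffices to show that $h$ minimizes $\Phi$.

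Next I would isolate the two structural features of $\Phi$. \emph{Conformal invariance}: pulling back through stereographic projection onto $S^2$, the functional $\Phi$ is invariant under the action $g\mapsto (g\circ\phi)\,|\det D\phi|$ of the Möbius group, which preserves total mass; this is precisely the symmetry that singles out $h$. \emph{Rearrangement monotonicity}: the entropy $\int g\log g$ depends only on the distribution function of $g$ and is unchanged by the symmetric decreasing rearrangement $g\mapsto g^{*}$, while the Riesz rearrangement inequality applied to the radially decreasing kernel $\log\frac{1}{|x-y|}$ yields $\iint g^{*}(x)\log|x-y|\,g^{*}(y)\,dx\,dy\le\iint g(x)\log|x-y|\,g(y)\,dx\,dy$; hence $\Phi(g^{*})\le\Phi(g)$.

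Finally, rearrangement alone does not conclude, since it is idempotent and $g^{*}$ need not equal $h$. The competing-symmetries step combines $R:g\mapsto g^{*}$ with a fixed conformal map $U$ (a right-angle rotation of $S^2$ pulled back to $\R^2$): since $\Phi(Ug)=\Phi(g)$ and $\Phi(Rg)\le\Phi(g)$, the operator $T=R\circ U$ satisfies $\Phi(Tg)\le\Phi(g)$, and one shows that $h$ is its unique fixed point and that the iterates $T^{n}g$ converge to $h$ in $L^1(\R^2)$. Passing to the limit with the lower semicontinuity of $\Phi$ along this sequence gives $\Phi(g)\ge\Phi(h)=\mez(1+\log\pi)$, which after undoing the scaling is the assertion. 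I expect the convergence of $T^{n}g$ to be the main obstacle: the rearrangement and the conformal rotation symmetrize about different points and genuinely compete, and proving that their alternation drives every admissible $g$ to the conformally symmetric profile $h$ is the technical heart of the argument. A secondary difficulty is making the Riesz step rigorous for the sign-changing, non-integrable kernel $\log|x-y|$, which requires a truncation argument together with the finiteness bounds above. Conceptually the whole inequality is dual, through the Legendre transform, to Onofri's inequality on $S^2$, and that duality both predicts the constant $\mez(1+\log\pi)$ and supplies an alternative route to the proof.
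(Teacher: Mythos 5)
First, a point of comparison: the paper does not prove this theorem at all --- it is quoted from Carlen--Loss \cite{CL} and used purely as a black box to bound the energy from below. So the only meaningful benchmark is the original Carlen--Loss argument, and your sketch is indeed that argument: reduction to unit mass by scaling, entropy invariance under rearrangement plus Riesz's inequality for the interaction term, the competing-symmetries iteration $T=R\circ U$ converging to the conformal density $h$, and the duality with Onofri's inequality. As an outline of that proof your plan is faithful, and you correctly identify the two genuine technical points (convergence of $T^ng$, and the truncation needed to apply Riesz to the signed, non-integrable kernel).

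There is, however, a concrete error at precisely the step that anchors the constant, and it cannot be repaired: the statement as printed (and hence what you set out to prove) is false. Write, as in your proposal, $\Phi(g)=\frac{1}{2}\int_{\R^2} g\log g\,\de x+\int_{\R^2\times\R^2}g(x)\log|x-y|\,g(y)\,\de x\,\de y$. For $h(x)=\frac{1}{\pi(1+|x|^2)^2}$ one has $\Delta\bigl[\tfrac{1}{2}\log(1+|x|^2)\bigr]=2\pi h$, and since both $\tfrac{1}{2}\log(1+|x|^2)$ and $\log|\cdot|*h$ equal $\log|x|+o(1)$ at infinity, their difference is a harmonic function vanishing at infinity, hence zero; so $(\log|\cdot|*h)(x)=\tfrac{1}{2}\log(1+|x|^2)$. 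Therefore the interaction term equals $\tfrac{1}{2}\int h\log(1+|x|^2)\,\de x=\tfrac{1}{2}$, while $\int h\log h\,\de x=-\log\pi-2$, giving
\[
\Phi(h)=\tfrac{1}{2}\pare{-\log\pi-2}+\tfrac{1}{2}=-\tfrac{1}{2}\pare{1+\log\pi},
\]
not $+\tfrac{1}{2}(1+\log\pi)$ as you claim. Since $h$ satisfies all the hypotheses with $M=1$, the inequality with the paper's constant $C(M)=\frac{M^2}{2}(1+\log\pi+\log M)$ fails at $f=h$: the sign of $1+\log\pi$ is a typo in the paper, and the correct sharp constant is $C(M)=\frac{M^2}{2}\pare{\log M-1-\log\pi}$. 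Your own scheme actually detects this: rearrangement plus competing symmetries shows $\inf\Phi=\Phi(h)$, so the constant is whatever $\Phi(h)$ really is, and the ``direct evaluation'' you invoke yields the negative value. With that correction the remainder of your outline is the right one (it is, up to presentation, the original proof), and the sign error is harmless for the paper's application, where only the finiteness of the constant --- i.e.\ lower boundedness of the energy --- is ever used.
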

Furthermore in the sequel we use the classical \emph{Gagliardo-Nirenberg inequality} in $\R^2$, namely
\begin{equation}\label{eq:GNS}
\|u\|_{L^p}\lesssim\|u\|_{L^2}^{\frac{2}{p}}\|\nabla u\|_{L^2}^{1-\frac{2}{p}},
\end{equation}
for $2<p<\infty$.
\subsection{Compactness tools}
In conclusion we recall a  compactness result due to Rakotoson, Temam \cite{RT} in the spirit of classical results of Aubin, Lions and Simon which will be needed in Section \ref{sect:apr}.
\begin{theorem}[Rakotoson, Temam \cite{RT}]\label{thm:comp}
Let $(V,\|\cdot\|_{V})$, $(H;\|\cdot\|_{H})$ be two separable Hilbert spaces. Assume that $V\subset H$ with a compact and dense embedding. Consider a sequence $\{u^\eps\}$, converging weakly to a function $u$ in $L^2([0,T];V)$, $T<\infty$. Then $u^\eps$ converges strongly to $u$ in 
$L^2([0,T];H)$, if and only if
\begin{enumerate}
\item $u^\eps(t)$ converges to $u(t)$ weakly in $H$ for a.e. $t$;
\item $\lim_{|E|\to0,E\subset[0,T]}\sup_{\eps>0}\int_E\|u^\eps(t)\|_H^2\de t=0$.
\end{enumerate}
\end{theorem}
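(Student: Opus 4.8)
The plan is to prove the characterization by treating the two implications separately, with the bulk of the work in the sufficiency (``if'') direction, since that is what is actually invoked for compactness. The key device I would introduce first is the spectral decomposition of the compact embedding $V\hookrightarrow H$. Define $T\colon H\to H$ by letting $Tf\in V$ be the $V$-Riesz representative of the $H$-functional $v\mapsto\scalar{f}{v}_H$, i.e. $\scalar{Tf}{v}_V=\scalar{f}{v}_H$ for all $v\in V$ (I write $\scalar{\cdot}{\cdot}_V,\scalar{\cdot}{\cdot}_H$ for the two inner products). Continuity of the embedding makes $T$ well defined, compactness makes it a compact operator on $H$, and it is self-adjoint and positive definite, the positivity using density of $V$ in $H$. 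The spectral theorem then yields an orthonormal basis $\curly{w_j}_{j\ge1}$ of $H$ with $Tw_j=\lambda_j^{-1}w_j$, $\lambda_j\to+\infty$; one checks $w_j\in V$, that $\scalar{w_j}{v}_V=\lambda_j\scalar{w_j}{v}_H$, and that $\curly{w_j/\sqrt{\lambda_j}}$ is an orthonormal basis of $V$. The only quantitative consequence I need is the tail estimate: with $Q_N$ the $H$-orthogonal projection off $\mathrm{span}\curly{w_1,\dots,w_N}$, one has $\norma{Q_Nv}_H^2\le\lambda_{N+1}^{-1}\norma{v}_V^2$ for every $v\in V$.

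For the sufficiency direction I would fix $\eta>0$ and split, for a.e. $t$ and each $N$, with $P_N=I-Q_N$,
\[
\norma{u^\eps(t)-u(t)}_H^2=\norma{P_N(u^\eps(t)-u(t))}_H^2+\norma{Q_N(u^\eps(t)-u(t))}_H^2.
\]
The high-frequency term is controlled uniformly in $\eps$: by the tail estimate and the boundedness of $\curly{u^\eps}$ in $L^2([0,T];V)$ (a consequence of its weak convergence there), $\int_0^T\norma{Q_N(u^\eps-u)}_H^2\,\de t\le C\lambda_{N+1}^{-1}$ with $C$ independent of $\eps$, so this is $<\eta/2$ once $N$ is large. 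For the fixed finite-rank low-frequency term, $\norma{P_N(u^\eps(t)-u(t))}_H^2=\sum_{j=1}^N|\scalar{u^\eps(t)-u(t)}{w_j}_H|^2\to0$ for a.e. $t$ as $\eps\to0$, by hypothesis (1). To promote this pointwise statement to convergence of the time integral I would apply the Vitali convergence theorem on the finite-measure interval $[0,T]$: the integrands are dominated by $2\norma{u^\eps(t)}_H^2+2\norma{u(t)}_H^2$, and hypothesis (2) is precisely the equi-integrability that makes the dominating family, hence $\curly{\norma{P_N(u^\eps-u)}_H^2}$, equi-integrable. Thus the low-frequency term tends to $0$ for each fixed $N$, giving $\limsup_\eps\int_0^T\norma{u^\eps-u}_H^2\,\de t\le\eta$, and $\eta\downarrow0$ yields strong convergence in $L^2([0,T];H)$.

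For the necessity direction, I would first note that strong convergence in $L^2([0,T];H)$ forces $\norma{u^\eps}_H^2\to\norma{u}_H^2$ in $L^1([0,T])$, via Cauchy--Schwarz applied to $\big|\norma{u^\eps}_H^2-\norma{u}_H^2\big|\le\norma{u^\eps-u}_H\pare{\norma{u^\eps}_H+\norma{u}_H}$; an $L^1$-convergent sequence is equi-integrable (its limit is, by absolute continuity, and the finitely many initial terms are each equi-integrable), so condition (2) follows. Condition (1) is the delicate point: strong $L^2([0,T];H)$ convergence only gives $u^\eps(t)\to u(t)$ in $H$, hence weakly, for a.e.\ $t$ \emph{along a subsequence}, and for the full sequence it may genuinely fail (a time ``typewriter'' sequence $u^\eps=\mathbf 1_{A_\eps}\phi$ with $|A_\eps|\to0$ converges to $0$ in both $L^2([0,T];V)$ and $L^2([0,T];H)$ yet has no a.e.-in-$t$ limit). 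I would therefore establish (1) in the up-to-subsequence sense, which is all that compactness applications require.

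I expect the main obstacle to be neither of these necessity bookkeeping points but the sufficiency step, namely producing a high-frequency bound that is uniform in $\eps$ \emph{without} any modulus of continuity in time. The spectral decomposition of the compact embedding is exactly what decouples the frequency truncation (uniform in $\eps$, controlled by $\lambda_{N+1}^{-1}$) from the finite-rank weak convergence (handled by (1) together with the equi-integrability (2)); getting this decomposition and the resulting tail estimate set up correctly is the crux of the proof.
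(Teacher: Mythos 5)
The paper does not prove Theorem \ref{thm:comp} at all: it is quoted from Rakotoson--Temam \cite{RT} and used as a black box in Section \ref{sect:apr}, so there is no internal proof to compare yours against, and your argument has to stand on its own. It does. The construction of the compact, self-adjoint, positive operator $T$ with $\scalar{Tf}{v}_V=\scalar{f}{v}_H$ is sound (density of $V$ in $H$ gives definiteness, $\norma{Tf}_V\le C\norma{f}_H$ plus compactness of the embedding gives compactness of $T$), the eigenbasis facts and the tail estimate $\norma{Q_Nv}_H^2\le\lambda_{N+1}^{-1}\norma{v}_V^2$ are correct, and the sufficiency proof assembles correctly: the high modes are controlled uniformly in $\eps$ by the $L^2([0,T];V)$-boundedness of the sequence (Banach--Steinhaus from the weak convergence), while for each fixed $N$ the finite-rank part tends to $0$ a.e.\ in $t$ by hypothesis (1) (a countable family of null sets causes no trouble) and its time integral tends to $0$ by Vitali, hypothesis (2) supplying exactly the needed equi-integrability of the dominating family $2\norma{u^\eps}_H^2+2\norma{u}_H^2$. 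This is the only direction the paper actually invokes, to upgrade the weak limits of $\curly{\sqrt{\rho^\tau}}$ and $\curly{\Lambda^\tau}$ to strong $L^2_{loc}$ convergence, and your proof of it is complete. The necessity of (2) via $\bigl|\norma{u^\eps}_H^2-\norma{u}_H^2\bigr|\le\norma{u^\eps-u}_H\pare{\norma{u^\eps}_H+\norma{u}_H}$ and the equi-integrability of $L^1$-convergent sequences is likewise correct.

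Your remark on the necessity of (1) is a genuine and correct observation about the statement as quoted: the ``typewriter'' family $u^\eps=\mathbf{1}_{A_\eps}\phi$ with $\phi\in V\setminus\curly{0}$ and $|A_\eps|\to0$ sweeping across $[0,T]$ converges strongly to $0$ in both $L^2([0,T];V)$ and $L^2([0,T];H)$ and satisfies (2), yet $u^\eps(t)$ converges weakly in $H$ at no $t$ in the swept region, so the ``only if'' in the literal formulation fails for the full sequence; what strong convergence does yield is convergence in measure of $\norma{u^\eps(t)-u(t)}_H$, hence (1) along a subsequence, which is the correct repaired statement and all that any compactness application (including this paper's) requires. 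Since the flaw lies in the quoted formulation rather than in your argument, your proposal should be regarded as correct and, on this point, more careful than the statement it proves.
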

\section{Polar decomposition}\label{sect:polar}
In this Section we will recall (see \cite{AM}) the main properties related to the polar factorization of wave functions (detalis will be given in the Appendix). The polar decomposition is a useful tool to factorize a wave function $\psi$ into its amplitude 
$\sqrt{\rho}=|\psi|$ and a unitary factor $\phi$ such that $\psi=\sqrt{\rho}\phi$, that we write the hydrodynamical quantities 
$\nabla\sqrt{\rho}, \Lambda$ without the necessity of defining the velocity $u=J/\rho$ and the phase $S$ as in the WKB formalism.
\newline
Let us consider a wave function $\psi\in L^2(\R^2)$ and define the set
\begin{equation}
P(\psi):=\{\phi\;\textrm{measurable}\;|\;\|\phi\|_{L^\infty(\R^2)}\leq1, \sqrt{\rho}\phi=\psi\;
\textrm{a.e. in}\;\R^2\},
\end{equation}
where $\sqrt{\rho}=|\psi|$. Of course if we consider $\phi\in P(\psi)$, then by the definition of the set $P(\psi)$ it is immediate that $|\phi|=1$ a.e. in $\sqrt{\rho}dx$ and $\phi\in P(\psi)$ is uniquely determined a.e. $\sqrt{\rho}dx$ in $\R^2$.
\newline
Through the polar factorization of the wave function $\psi$ we get the following identity holds
\begin{equation}\label{eq:null-form-hydr}
\hbar^2\re(\nabla\conj{\psi}\otimes\nabla\psi)
=\frac{J\otimes J}{\rho}+\hbar^2\nabla\sqrt{\rho}\otimes\nabla\sqrt{\rho}.
\end{equation}
Indeed by writing the hydrodynamical quantities $\nabla\sqrt{\rho}, \Lambda$ in terms of the associated wave function $\psi$ and its polar factor $\phi$, it follows
\begin{equation}
\nabla\sqrt{\rho}=\re(\conj{\phi}\nabla\psi),\qquad\Lambda=\hbar\im(\conj{\phi}\nabla\psi).
\end{equation}
Hence the equation \eqref{eq:null-form-hydr} holds true since it hides a null structure, namely
\begin{equation*}
\hbar^2\re\curly{(\phi\nabla\conj{\psi})\otimes(\conj{\phi}\nabla\psi)}
=\hbar^2\re(\conj{\phi}\nabla\psi)\otimes\re(\conj{\phi}\nabla\psi)+\hbar^2\im(\conj{\phi}\nabla\psi)
\otimes\im(\conj{\phi}\nabla\psi).
\end{equation*}
Moreover this bilinear structure is $H^1-$stable, namely if $\{\psi_n\}\subset H^1(\R^2)$ is a strongly convergent sequence in $H^1$, $\psi_n\to\psi$, then also their related hydrodynamical quantities converge strongly, $\nabla\sqrt{\rho_n}\to\nabla\sqrt{\rho}, \Lambda_n\to\Lambda$ in $H^1(\R^2)$. This is a quite remarkable property, since for the polar factors we could only have a weak-$\star$ convergence in $L^\infty$ (up to passing to a subsequence), since as we already pointed out they are uniquely defined only in $\sqrt{\rho}dx$. Hence the weak convergence of the quantities $\re(\conj{\phi_n}\nabla\psi_n), \im(\conj{\phi_n}\nabla\psi_n)$ is upgraded to the strong one, thanks to the bilinear structure \eqref{eq:null-form-hydr}. Furthermore, it is worth noting that this structure is fundamental for the stability in the energy space: with the WKB formalism for example this couldn't be achieved without some further regularity assumptions. 
\begin{lemma}[Existence and Stability Lemma]\label{lemma:phase}
Let $\psi\in H^1(\R^2)$, $\sqrt{\rho}:=|\psi|$, then:
\begin{itemize}
\item[(i)] there exists $\phi\in P(\psi)\subset L^\infty(\R^2)$ such that 
$\psi=\sqrt{\rho}\phi$ a.e. in $\R^2$, $\sqrt{\rho}\in H^1(\R^2)$, $\phi$ is unique in $\sqrt{\rho}dx$,
$\nabla\sqrt{\rho}=\re(\conj{\phi}\nabla\psi)$, if we denote $\Lambda:=\hbar\im(\conj{\phi}\nabla\psi)$,  then $\Lambda\in L^2(\R^2)$ and moreover
\begin{equation}\label{eq:null-form}
\hbar^2\re(\d_j\conj{\psi}\d_k\psi)=\hbar^2\d_j\sqrt{\rho}\d_k\sqrt{\rho}+\Lambda^{(j)}\Lambda^{(k)}
\end{equation}
and
\begin{equation}\label{eq:119}
\hbar\nabla\conj{\psi}\wedge\nabla\psi=2i\nabla\sqrt{\rho}\wedge\Lambda.
\end{equation}
\item[(ii)] (Stability) Let $\psi_n\to\psi$ strongly in $H^1(\R^2)$, denote by $\phi_n$ the polar factor of $\psi_n$, then  it follows
\begin{equation}\label{eq:strong_conv}
\nabla\sqrt{\rho_n}\to\nabla\sqrt{\rho},\quad \Lambda_n\to\Lambda\qquad\textrm{in}\; L^2(\R^2),
\end{equation}
where $\Lambda_n:=\hbar\im(\conj{\phi_n}\nabla\psi_n)$.
\end{itemize}
\end{lemma}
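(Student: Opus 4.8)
The plan is to prove (i) by reducing everything to pointwise algebraic identities resting on a single analytic input — that $\sqrt\rho=|\psi|\in H^1(\R^2)$ — and to prove (ii) by combining weak compactness with an energy identity that upgrades weak convergence to strong. For (i), I would first recall the standard fact that $\psi\in H^1(\R^2;\C)$ implies $|\psi|\in H^1(\R^2)$ with $\nabla|\psi|=\re(\conj\phi\nabla\psi)$, where $\phi:=\psi/|\psi|$ on $\{\psi\neq0\}$; this is obtained by regularizing $|\psi|$ as $\sqrt{|\psi|^2+\eps^2}$, differentiating, and letting $\eps\to0$. Extending $\phi$ by $0$ on $\{\psi=0\}$ produces $\phi\in P(\psi)$ with $\sqrt\rho\phi=\psi$; since $\sqrt\rho>0$ exactly on $\{\psi\neq0\}$, where $\phi$ is forced, $\phi$ is unique $\sqrt{\rho}dx$-a.e. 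As $|\phi|\le1$ we get $|\Lambda|\le\hbar|\nabla\psi|$, so $\Lambda\in L^2$. The identities \eqref{eq:null-form}, \eqref{eq:119} are then purely algebraic: on $\{\psi\neq0\}$ put $w_k=\conj\phi\d_k\psi$, so that $\re w_k=\d_k\sqrt\rho$, $\im w_k=\Lambda^{(k)}/\hbar$, and, using $\phi\conj\phi=1$, $\d_k\psi=\phi w_k$; hence $\d_j\conj\psi\,\d_k\psi=\conj{w_j}w_k$, whose real part gives \eqref{eq:null-form} and whose imaginary (antisymmetric) part gives \eqref{eq:119}. On $\{\psi=0\}$ both $\nabla\psi=0$ and $\nabla\sqrt\rho=0$ a.e. (gradients of $H^1$ functions vanish a.e. on their zero sets), so every term vanishes there and the identities hold on all of $\R^2$.

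For (ii), the trace $j=k$ of \eqref{eq:null-form} gives the pointwise energy identity $\hbar^2|\nabla\psi|^2=\hbar^2|\nabla\sqrt\rho|^2+|\Lambda|^2$, and likewise for each $\psi_n$. This has three consequences. First, $\{\nabla\sqrt{\rho_n}\}$ and $\{\Lambda_n\}$ are bounded in $L^2$; since $\|\sqrt{\rho_n}-\sqrt\rho\|_{L^2}\le\|\psi_n-\psi\|_{L^2}\to0$, the distributional identification yields $\nabla\sqrt{\rho_n}\wlim\nabla\sqrt\rho$ in $L^2$, and along a subsequence $\Lambda_n\wlim\ell$ for some $\ell\in L^2$. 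Second, $J_n=\hbar\im(\conj{\psi_n}\nabla\psi_n)\to\hbar\im(\conj\psi\nabla\psi)=J$ strongly, as a product of strongly $H^1$-convergent factors, while $J_n=\sqrt{\rho_n}\Lambda_n\wlim\sqrt\rho\,\ell$ against test functions because $\sqrt{\rho_n}\to\sqrt\rho$ strongly in $L^2$; hence $\sqrt\rho\,\ell=\sqrt\rho\,\Lambda$, i.e. $\ell=\Lambda$ on $\{\rho>0\}$. Third, integrating the energy identities gives $\hbar^2\|\nabla\sqrt{\rho_n}\|_{L^2}^2+\|\Lambda_n\|_{L^2}^2=\hbar^2\|\nabla\psi_n\|_{L^2}^2\to\hbar^2\|\nabla\psi\|_{L^2}^2=\hbar^2\|\nabla\sqrt\rho\|_{L^2}^2+\|\Lambda\|_{L^2}^2$.

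To finish I would first pin down $\ell$ on the vacuum. Weak lower semicontinuity together with convergence of the total energy gives $\hbar^2\|\nabla\sqrt\rho\|_{L^2}^2+\|\ell\|_{L^2}^2\le\liminf\big(\hbar^2\|\nabla\sqrt{\rho_n}\|_{L^2}^2+\|\Lambda_n\|_{L^2}^2\big)=\hbar^2\|\nabla\sqrt\rho\|_{L^2}^2+\|\Lambda\|_{L^2}^2$, so $\|\ell\|_{L^2}\le\|\Lambda\|_{L^2}$; since $\ell=\Lambda$ on $\{\rho>0\}$ and $\Lambda=0$ on $\{\rho=0\}$, this forces $\ell=0$ on $\{\rho=0\}$, whence $\ell=\Lambda$ everywhere, and uniqueness of the limit upgrades this to $\Lambda_n\wlim\Lambda$ for the full sequence. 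Finally, writing $a_n:=\hbar^2\|\nabla\sqrt{\rho_n}\|_{L^2}^2$, $b_n:=\|\Lambda_n\|_{L^2}^2$ with limits $a,b$, weak lower semicontinuity gives $a\le\liminf a_n$ and $b\le\liminf b_n$ while $a_n+b_n\to a+b$; the elementary squeeze $\limsup a_n\le (a+b)-\liminf b_n\le a$ then forces $a_n\to a$ and $b_n\to b$, i.e. norm convergence. Norm convergence plus weak convergence in the Hilbert space $L^2$ yields the strong convergence \eqref{eq:strong_conv}. The main obstacle is precisely the behaviour on the vacuum $\{\rho=0\}$: the strong limit $J_n\to J$ controls $\Lambda_n$ only where $\rho>0$, and it is the energy identity — not any control on the polar factors $\phi_n$, which converge merely weakly-$\star$ — that prevents mass of the weak limit from escaping onto the nodal set.
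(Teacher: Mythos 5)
Your proof is correct, and its skeleton matches the paper's: part (i) rests on the same two ingredients the paper uses (the pointwise null-form computation giving \eqref{eq:null-form} and \eqref{eq:119}, plus the fact that $\nabla\psi=0$ a.e.\ on the nodal set $\{\psi=0\}$, which makes the identities insensitive to the choice of $\phi$ there), and the final upgrade in part (ii) is the paper's as well — the integrated identity $\hbar^2\|\nabla\psi_n\|_{L^2}^2=\hbar^2\|\nabla\sqrt{\rho_n}\|_{L^2}^2+\|\Lambda_n\|_{L^2}^2$, weak lower semicontinuity, and the Radon--Riesz property in $L^2$. Where you genuinely diverge is the identification of the weak limit of $\Lambda_n$. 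The paper extracts $\phi_n\wstar\phi$ in $L^\infty(\R^2)$, notes that $\phi$ is again a polar factor of $\psi$ (since $\sqrt{\rho_n}\phi_n=\psi_n\to\psi$ while $\sqrt{\rho_n}\to\sqrt{\rho}$ strongly in $L^2$), and passes to the limit directly in $\Lambda_n=\hbar\im(\conj{\phi_n}\nabla\psi_n)$ — weak-$\star$ $L^\infty$ paired with the strong $L^2$ convergence of $\nabla\psi_n$ — obtaining $\Lambda_n\wlim\hbar\im(\conj{\phi}\nabla\psi)=\Lambda$ globally in one stroke, the value being independent of the polar factor precisely because $\nabla\psi$ vanishes a.e.\ on the vacuum. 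You never invoke compactness of the polar factors at all: you identify $\ell=\Lambda$ only on $\{\rho>0\}$ via the currents $J_n=\sqrt{\rho_n}\Lambda_n\to J$, and then exclude leakage of $\ell$ onto the nodal set by the energy balance and semicontinuity (your squeeze $\|\ell\|_{L^2}\le\|\Lambda\|_{L^2}$ together with $\ell=\Lambda$ on $\{\rho>0\}$ and $\Lambda=0$ a.e.\ on $\{\rho=0\}$ forces $\ell=0$ there). The paper's route is shorter and identifies the limit everywhere at once; yours is slightly longer but self-contained at that step, and it makes explicit that the stability assertion depends only on the hydrodynamic observables $(\sqrt{\rho_n},J_n)$ and the sharp energy identity, not on any convergence structure of the (non-unique on the vacuum, merely weak-$\star$ compact) polar factors — which is exactly the possible concentration on the nodal set that your closing remark isolates. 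The only cosmetic difference in part (i) is that you obtain $\nabla\sqrt{\rho}=\re(\conj{\phi}\nabla\psi)$ from the standard $\sqrt{|\psi|^2+\eps^2}$ regularization, whereas the paper runs a smooth approximating sequence $\psi_n$ and passes to weak limits in $\re(\conj{\phi_n}\nabla\psi_n)=\nabla|\psi_n|$; both are routine, and both yield \eqref{eq:strong_conv} by the identical Hilbert-space argument.
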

The previous identity \eqref{eq:119} is needed to obtain the generalized irrotationality condition \eqref{eq:irrot}, since  $\nabla\wedge J=\hbar
\im(\nabla\conj{\psi}\wedge\nabla\psi)$. 
\newline
In the Bohmian mechanics (see \cite{TT}) and in the Nelson's stochastic mechanics (see \cite{Carl}) the major difficulty regards the definition of current and osmotic velocities in the nodal regions ($\rho=0$). The main advantage of our approach is to build up a self consistent theory, based only on particle density and current density, which, thanks to the polar decomposition,  does not require the use of velocity fields.
\section{QHD without collisions and nonlinear Schr\"odinger-Poisson system in 2D}\label{sect:noncoll}
In this section we consider the collisionless case, namely $f=0$,
\begin{equation}\label{eq:QHD_farl}
\left\{\begin{array}{ll}
\d_t\rho+\diver J=0\\
\d_tJ+\diver\pare{\frac{J\otimes J}{\rho}}+\nabla P(\rho)+\rho\nabla V
=\frac{\hbar^2}{2}\rho\nabla\pare{\frac{\Delta\sqrt{\rho}}{\sqrt{\rho}}}\\
-\Delta V=\rho,
\end{array}\right.
\end{equation}
and our goal is to show the connection with the nonlinear Schr\"odinger-Poisson system
\begin{equation}\label{eq:2D_NLS}
\left\{\begin{array}{l}
i\hbar\d_t\psi=-\frac{\hbar^2}{2}\Delta\psi+|\psi|^{p-1}\psi+V\psi\\
-\Delta V=|\psi|^2,
\end{array}\right.
\end{equation}
with initial datum $\psi(0)=\psi_0$.
\newline
Indeed, let us consider a (global) solution to \eqref{eq:2D_NLS}, and the equation for the mass density $\rho=|\psi|^2$ is immediately given by
\begin{equation*}
\d_t\rho+\diver J=0,
\end{equation*}
where $J=\hbar\im(\conj{\psi}\nabla\psi)$. Now, if we compute the time derivative of the current density $J$, we find out the following conservation law
\begin{equation*}
\d_tJ+\diver\pare{\hbar^2\re(\nabla\conj{\psi}\otimes\nabla\psi)}-\frac{\hbar^2}{4}\Delta\nabla\rho+\nabla P(\rho)+\rho\nabla V=0,
\end{equation*}
where $V$ is the Poisson electrostatic potential given by $-\Delta V=\rho$, and 
$P(\rho)=\frac{p-1}{p+1}\rho^{(p+1)/2}$. 
Now, from formula \eqref{eq:null-form} we can write
\begin{equation*}
\hbar^2\re(\nabla\conj{\psi}\otimes\nabla\psi)=\hbar^2\nabla\sqrt{\rho}\otimes\nabla\sqrt{\rho}+
\Lambda\otimes\Lambda,
\end{equation*}
where $\Lambda$ is defined as in Lemma \ref{lemma:phase}. Finally, we just notice that we can write, as in equation \eqref{eq:bohm}
\begin{equation*}
\frac{1}{4}\Delta\nabla\rho-\diver(\nabla\sqrt{\rho}\otimes\nabla\sqrt{\rho})=\frac{1}{2}
\rho\nabla\pare{\frac{\Delta\sqrt{\rho}}{\sqrt{\rho}}},
\end{equation*}
where the distribution $T=\frac{1}{2}\rho\nabla\pare{\frac{\Delta\sqrt{\rho}}{\sqrt{\rho}}}$ is defined by
\begin{equation*}
\scalar{T}{\phi}=\int_{\R^2}\nabla\sqrt{\rho}\otimes\nabla\sqrt{\rho}:\nabla\phi-\frac{1}{4}\rho\Delta\diver\phi dx,\quad\textrm{for all }\phi\in\cfun^\infty_0(\R^2),
\end{equation*}
so that now the conservation law for the current density read
\begin{equation}
\d_tJ+\diver\pare{\frac{J\otimes J}{\rho}}+\nabla P(\rho)+\rho\nabla V
=\frac{\hbar^2}{2}\rho\nabla\pare{\frac{\Delta\sqrt{\rho}}{\sqrt{\rho}}}.
\end{equation}
Hence, given a solution $\psi$ to \eqref{eq:2D_NLS}, with initial datum $\psi(0)=\psi_0$, by defining 
$(\rho, J):=(|\psi|^2, \hbar\im(\conj{\psi}\nabla\psi))$ we have a solution to \eqref{eq:QHD_farl} with initial data $(\rho_0, J_0)=(|\psi_0|^2, \hbar\im(\conj{\psi_0}\nabla\psi_0))$. 
These arguments can be written in a rigorous way by using a regularization argument as in Proposition 15 in \cite{AM}.
\begin{proposition}\label{prop:19}
Let $\psi_0\in H^1(\R^2)$, define the initial data, 
$(\rho_0, J_0):=(|\psi_0|^2, \hbar\im(\conj{\psi_0}\nabla\psi_0))$, moreover assume the following conditions hold:
\begin{equation}\label{eq:83}
\int_{\R^2}\rho_0\log\rho_0\de x<\infty,
\end{equation}
and
\begin{equation}\label{eq:84}
V(0, \cdot):=-\frac{1}{2\pi}\int_{\R^2}\log|\cdot-y|\rho_0(y)\de y\in L^r(\R^2),
\end{equation}
for some $2<r<\infty$.
Then for any $T>0$ there exists a FEWS $(\rho, J)$ to the Cauchy problem \eqref{eq:QHD_farl} in the space-time slab $[0, T)\times\R^2$. Furthermore the energy $E(t)$ defined as in \eqref{eq:en} is conserved for all times $t\in[0, T)$.
\end{proposition}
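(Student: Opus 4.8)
The plan is to deduce the proposition from a global well-posedness result for the nonlinear Schr\"odinger--Poisson system \eqref{eq:2D_NLS}, and then to transfer the Schr\"odinger solution to the hydrodynamic variables by means of the polar factorization of Lemma \ref{lemma:phase}. Concretely, I would first construct a global solution $\psi\in C([0,T);H^1(\R^2))$ of \eqref{eq:2D_NLS} with $\psi(0)=\psi_0$, and then set $\rho:=|\psi|^2$ and $J:=\hbar\im(\conj{\psi}\nabla\psi)$. The formal computation already carried out in this section---differentiating $\rho$ and $J$ in time and using \eqref{eq:null-form} to rewrite $\hbar^2\re(\nabla\conj{\psi}\otimes\nabla\psi)$ as $\Lambda\otimes\Lambda+\hbar^2\nabla\sqrt{\rho}\otimes\nabla\sqrt{\rho}$---shows that $(\rho,J)$ solves \eqref{eq:QHD_farl}. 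This is made rigorous exactly as in Proposition 15 of \cite{AM}: one regularizes $\psi$, tests against $\eta$ and $\zeta$ as in \eqref{eq:QHD1}--\eqref{eq:QHD2}, and passes to the limit using the $H^1$-stability of the bilinear hydrodynamic quantities in Lemma \ref{lemma:phase}(ii). The generalized irrotationality condition \eqref{eq:irrot} is not a separate matter: it is the distributional identity \eqref{eq:119}, since $\nabla\wedge J=\hbar\im(\nabla\conj{\psi}\wedge\nabla\psi)$.

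For the local theory I would run a standard fixed-point argument in $H^1(\R^2)$ based on the Strichartz estimates of Theorem \ref{thm:strich} and their inhomogeneous companion. In two dimensions the power term $|\psi|^{p-1}\psi$ is energy-subcritical for every finite $p$ (since $H^1\hookrightarrow L^q$ for all $q<\infty$), so it is handled in the usual way; the only genuinely new term is the logarithmic Hartree nonlinearity $V\psi$. To close the contraction I would estimate $V\psi$ in a dual Strichartz space using the assumption \eqref{eq:84}: writing $V=-\frac{1}{2\pi}\log|\cdot|\ast\rho$ and controlling $\|V(t)\|_{L^r}$ through $\rho=|\psi|^2$, together with the Gagliardo--Nirenberg inequality \eqref{eq:GNS} to place $\rho$ in the complementary exponent $L^{r'}$. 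This produces a unique local solution together with conservation of the mass $\|\psi(t)\|_{L^2}^2$ and of the energy $E(t)$.

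The decisive step is to upgrade the local solution to a global one through a uniform-in-time bound on $\|\psi(t)\|_{H^1}$. The energy is $E=\frac{\hbar^2}{2}\|\nabla\psi\|_{L^2}^2+\frac{2}{p+1}\|\psi\|_{L^{p+1}}^{p+1}+\frac12\int V\rho$, where the potential part equals the renormalized logarithmic interaction $-\frac{1}{4\pi}\ii\log|x-y|\rho(x)\rho(y)\,\de x\,\de y$; note that in $\R^2$ the expression $\frac12\int|\nabla V|^2$ written in \eqref{eq:en} must be read in this renormalized sense, the naive integral being divergent at infinity as soon as $\int\rho>0$. Assumption \eqref{eq:84}, together with \eqref{eq:GNS} (which places $\rho$ in $L^{r'}$), guarantees $E(0)<\infty$. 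To keep the \emph{indefinite} logarithmic potential energy from driving the kinetic energy to infinity I would invoke the logarithmic Sobolev inequality of Theorem \ref{thm:log_Sob}, applied to $\rho(t)$ with the conserved mass $M$: it controls the logarithmic interaction in terms of the entropy $\int\rho\log\rho$ and of $M$, while the entropy is finite at $t=0$ by \eqref{eq:83} and is controlled thereafter by a sublinear power of $\|\nabla\psi\|_{L^2}^2$ via \eqref{eq:GNS}. Combined with the conservation of $E$, this is meant to yield a closed, uniform bound on $\|\nabla\psi(t)\|_{L^2}$, hence global existence by the blow-up alternative. I expect this coupling---reconciling the indefinite sign and the borderline integrability of the two-dimensional logarithmic potential with the energy identity through Theorem \ref{thm:log_Sob}---to be the main obstacle; everything else is a matter of combining the conservation laws with \eqref{eq:GNS}.

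Once global existence and the conservation of $E(t)$ for \eqref{eq:2D_NLS} are in hand, the transfer of the first paragraph produces, for every $T>0$, a finite energy weak solution $(\rho,J)$ of \eqref{eq:QHD_farl} on $[0,T)\times\R^2$, with $\sqrt{\rho}\in H^1(\R^2)$ and $\Lambda\in L^2(\R^2)$ at almost every time; the conservation of $E(t)$ for the Schr\"odinger flow is precisely the conservation of the hydrodynamic energy \eqref{eq:en}, which establishes the final assertion of the proposition.
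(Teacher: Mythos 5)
Your overall architecture coincides with the paper's: Proposition \ref{prop:19} is reduced to global well-posedness of \eqref{eq:2D_NLS} (the paper's Theorem \ref{thm:2D_NLS}), and the transfer to $(\rho,J)$ goes through the polar factorization of Lemma \ref{lemma:phase} and the regularization argument of Proposition 15 in \cite{AM}, with \eqref{eq:119} giving the generalized irrotationality condition. The gap is in how you treat the potential term $V\psi$ in the Strichartz/contraction step. You propose to control $\|V(t)\|_{L^r}$ \emph{directly} from $\rho(t)=|\psi(t)|^2$ by a convolution estimate, placing $\rho$ in the ``complementary exponent $L^{r'}$'' via Gagliardo--Nirenberg. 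This cannot work: the kernel $\log|\cdot|$ belongs to no Lebesgue space on $\R^2$ (it grows at infinity), so no Young-- or HLS--type inequality bounds $\|\log|\cdot|\ast\rho\|_{L^r}$ by Lebesgue norms of $\rho$; this failure is precisely why hypothesis \eqref{eq:84} is imposed, and the paper's Remark after Lemma \ref{lemma:elec_pot} stresses that even the bounds on $\nabla V$ give no information on $V$ itself. The paper's actual mechanism is dynamical: differentiating \eqref{eq:elec_pot} in time and using the continuity equation gives $\d_tV=-\frac{1}{2\pi}\frac{x}{|x|^2}\ast J$, the generalized Hardy--Littlewood--Sobolev inequality with $J\in L^\infty_tL^p_x$, $1\leq p<2$ (from energy plus Gagliardo--Nirenberg) yields $\d_tV\in L^\infty_tL^r_x$, and then $\|V(t)\|_{L^r}\leq\|V(0)\|_{L^r}+t\|\d_tV\|_{L^\infty_tL^r_x}$. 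In other words, \eqref{eq:84} enters only as a datum to be \emph{propagated in time}, not through a static estimate; your sketch is missing this idea, and without it the contraction does not close.

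There is a second, smaller discrepancy in the globalization step. The logarithmic Sobolev inequality of Theorem \ref{thm:log_Sob} bounds $-\ii\rho\log|x-y|\rho\,\de x\,\de y$ \emph{from above} by the entropy, i.e.\ it controls the positive part of the potential energy; the paper uses it (together with \eqref{eq:83}) exactly and only to conclude $E_0<\infty$. It does not provide the coercivity you describe: to prevent the kinetic energy from growing while the interaction term decreases, one would need an \emph{upper} bound on $+\ii\rho\log|x-y|\rho$, which log-Sobolev does not give (that would require, e.g., control of logarithmic moments $\int\rho\log(1+|x|)\de x$). So your proposed ``closed, uniform bound on $\|\nabla\psi(t)\|_{L^2}$ via Theorem \ref{thm:log_Sob} plus GNS'' uses the inequality in the wrong direction. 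The paper's route is different: the local existence time $T^\ast$ depends only on conserved quantities, so the local argument is iterated on slabs $[kT^\ast,(k+1)T^\ast]$, with Lemma \ref{lemma:elec_pot} supplying the $L^\infty_tL^r_x$ control of $V$ on each slab. You should replace your coercivity paragraph by this iteration, and your static convolution estimate by the time-propagation lemma.
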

Consequently, to prove the Proposition \ref{prop:19} we only need to state a global well-posedness result for the Cauchy problem associated to equation \eqref{eq:2D_NLS}. Indeed, let us point out that conditions \eqref{eq:83} and \eqref{eq:84} are needed to provide a global solution to the Cauchy problem associated to equation \eqref{eq:2D_NLS}. 
Furthermore, as we will see in the next section, we also need the solution to \eqref{eq:2D_NLS} to satisfy appropriate estimates in some space-time mixed Lebesgue spaces. The result we prove is the following
\begin{theorem}\label{thm:2D_NLS}
Let us consider the Cauchy problem \eqref{eq:2D_NLS} with initial datum
\begin{equation}\label{eq:2D_NLS_iv}
\psi(0)=\psi_0,
\end{equation}
such that
\begin{equation}\label{eq:85}
\psi_0\in H^1(\R^2),\qquad\int_{\R^2}|\psi_0|^2\log|\psi_0|^2\de x<\infty,
\end{equation}
and
\begin{equation}\label{eq:86}
V(0, \cdot):=-\frac{1}{2\pi}\int_{\R^2}\log|\cdot-y|\rho_0(y)\de y\in L^r(\R^2),
\end{equation}
for some $2<r<\infty$. Then, there exists a unique solution 
$\psi\in\cfun([0, \infty);H^1(\R^2))$ to \eqref{eq:2D_NLS}, \eqref{eq:2D_NLS_iv}. Furthermore
the energy
\begin{multline}\label{eq:2D_en}
E(t):=\int_{\R^2}\frac{\hbar^2}{2}|\nabla\psi|^2+\frac{2}{p+1}|\psi|^{p+1}
+\mez V|\psi|^2\de x\\
=\int_{\R^2}\frac{\hbar^2}{2}|\nabla\psi(t, x)|^2+\frac{2}{p+1}|\psi(t, x)|^{p+1}\de x\\
\quad-\frac{1}{4\pi}\int_{\R^2\times\R^2}\rho(t, x)\log|x-y|\rho(t, y)\de x\de y,
\end{multline}
is well-defined for all times and it is conserved. Finally, 
for every $T>0$ and for every $(q, r)$ admissible pair of Schr\"odinger exponents in $\R^2$, we have
\begin{equation}\label{eq:2D_Strich}
\|\nabla\psi\|_{L^q_tL^r_x([0, T)\times\R^2)}\leq C(T, \|\psi_0\|_{L^2}, E_0).
\end{equation}
\end{theorem}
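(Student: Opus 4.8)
The plan is to regard \eqref{eq:2D_NLS} as a nonlinear Schr\"odinger equation with two nonlinearities---the defocusing power term $|\psi|^{p-1}\psi$ and the logarithmic Hartree term $V\psi$, with $V=-\frac{1}{2\pi}\log|\cdot|*|\psi|^{2}$---and to combine a contraction-mapping local theory with the conservation laws and a global a priori $H^{1}$ bound. First I would set up the Duhamel formulation $\psi(t)=U(t)\psi_{0}-\frac{i}{\hbar}\int_{0}^{t}U(t-s)\bigl(|\psi|^{p-1}\psi+V\psi\bigr)(s)\,\de s$, with $U$ the free Schr\"odinger group, and run a fixed point in $C([0,\tau];H^{1})$ intersected with the Strichartz spaces $\dot S^{0}\cap\dot S^{1}$ on $[0,\tau]\times\R^{2}$, using Theorem~\ref{thm:strich} and the two-dimensional embedding $H^{1}\hookrightarrow L^{q}$ for every $q<\infty$. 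The power term is placed in dual Strichartz norms by H\"older and this embedding, exactly as in the standard subcritical theory.

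The real difficulty, both here and in the global step, is the logarithmic potential, whose kernel is singular at the origin and unbounded at infinity at the same time. For the local theory I would split $\log|z|=\log|z|\,\mathbf 1_{|z|<1}+\log|z|\,\mathbf 1_{|z|\ge1}$ and correspondingly $V=V_{\mathrm{near}}+V_{\mathrm{far}}$. The near kernel lies in $L^{a}(\R^{2})$ for every $a<\infty$, so Young's inequality controls $V_{\mathrm{near}}$ and $\nabla V_{\mathrm{near}}$ in strong norms by $\|\rho\|_{L^{b}}\lesssim\|\psi\|_{H^{1}}^{2}$; the far part, which only grows logarithmically, is kept in $L^{r}$-based norms by means of hypothesis \eqref{eq:86}. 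H\"older's inequality then puts $V\psi$ and $\nabla(V\psi)$ into dual Strichartz norms. The resulting estimates are trilinear and Lipschitz in $\psi$, closing the contraction on a short interval and producing a unique local solution in $C([0,\tau];H^{1})$.

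To globalize I would first justify conservation of mass $\|\psi(t)\|_{L^{2}}=\|\psi_{0}\|_{L^{2}}$ and of the energy \eqref{eq:2D_en} by regularizing the data and the equation and passing to the limit; assumptions \eqref{eq:85}--\eqref{eq:86} together with Theorem~\ref{thm:log_Sob} guarantee that $E_{0}$ and the initial logarithmic moment $\int\rho_{0}\log(1+|x|^{2})\,\de x$ are finite. The crux is the a priori bound on $\|\nabla\psi(t)\|_{L^{2}}$: because the interaction energy $-\frac{1}{4\pi}\int_{\R^{2}\times\R^{2}}\rho\log|x-y|\rho\,\de x\de y$ has no definite sign, the kinetic energy is \emph{not} immediately controlled by $E_{0}$. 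I would bound the positive, far-field part of $\int_{\R^{2}\times\R^{2}}\rho\log|x-y|\rho\,\de x\de y$ by a constant multiple of $M\int\rho\log(1+|x|^{2})\,\de x$ through $\log(1+|x-y|)\le\log(1+|x|)+\log(1+|y|)$, and then propagate this logarithmic moment using the continuity equation $\d_{t}\rho+\diver J=0$: one finds $\frac{\de}{\de t}\int\rho\log(1+|x|^{2})\,\de x=\int J\cdot\frac{2x}{1+|x|^{2}}\,\de x\lesssim\|\psi\|_{L^{2}}\|\nabla\psi\|_{L^{2}}$. Inserting the energy identity $\frac{\hbar^{2}}{2}\|\nabla\psi\|_{L^{2}}^{2}\le E_{0}+\frac{1}{4\pi}\int_{\R^{2}\times\R^{2}}\rho\log|x-y|\rho\,\de x\de y$ yields a closed differential inequality for the kinetic energy and the logarithmic moment, which Gronwall's lemma integrates to give $\|\nabla\psi(t)\|_{L^{2}}\le C(T,\|\psi_{0}\|_{L^{2}},E_{0})$ on every finite interval; the logarithmic Sobolev inequality is what simultaneously keeps the entropy, the near-field interaction, and the lower bound of the energy under control. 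Local well-posedness together with this bound yields the global solution in $C([0,\infty);H^{1})$.

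Finally, for \eqref{eq:2D_Strich} I would differentiate the equation and apply the inhomogeneous Strichartz estimate to $\nabla\psi$: on a short slab the terms $\nabla(|\psi|^{p-1}\psi)$ and $\nabla(V\psi)=\nabla V\,\psi+V\,\nabla\psi$ are estimated in dual Strichartz norms by H\"older and \eqref{eq:GNS}, with $\nabla V$ handled through the order-one Riesz kernel $\tfrac{x}{|x|^{2}}$ by Hardy--Littlewood--Sobolev and the global $H^{1}$ bound just obtained. Taking the slab short enough relative to the controlled $H^{1}$ norm absorbs the $V\,\nabla\psi$ contribution, and summing over a finite partition of $[0,T)$ gives the claimed estimate with constant depending only on $T$, $\|\psi_{0}\|_{L^{2}}$ and $E_{0}$. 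I expect the logarithmic potential to be the main obstacle throughout: handling its origin singularity and its growth at infinity within a single Strichartz or energy estimate is exactly what forces the kernel splitting in the local theory and the coupled moment--energy Gronwall argument in the global one.
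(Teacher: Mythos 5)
Your overall architecture---Duhamel plus a Strichartz contraction for the local theory, conservation of mass and energy, and a slab-by-slab iteration for \eqref{eq:2D_Strich}---matches the paper's proof. Where you genuinely diverge is in how the logarithmic potential and the kinetic energy are controlled. The paper never splits the kernel and never propagates spatial moments: its Lemma \ref{lemma:elec_pot} gives $\nabla V\in L^\infty_tL^p_x$ for all $2\leq p<\infty$ by Hardy--Littlewood--Sobolev applied to the kernel $\frac{x-y}{|x-y|^2}$, and then---this is the key device---differentiates \eqref{eq:elec_pot} in time, uses the continuity equation to write $\d_tV=-\frac{1}{2\pi}\frac{x}{|x|^2}\ast J$, bounds $\d_tV\in L^\infty_tL^r_x$ by HLS (since the energy bound and \eqref{eq:GNS} give $J\in L^\infty_tL^p_x$ for $1\leq p<2$), and concludes $\|V(t)\|_{L^r}\leq\|V(0)\|_{L^r}+t\,\|\d_tV\|_{L^\infty_tL^r_x}$, so that \eqref{eq:86} propagates with at most linear growth in time. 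Your coupled moment--energy Gronwall argument, by contrast, is an explicit treatment of the coercivity problem (the interaction integral has no sign) which the paper leaves essentially implicit behind energy conservation; it is sound, and in this respect more careful than the printed proof. One small correction: Theorem \ref{thm:log_Sob} does not \emph{give} finiteness of $\int\rho_0\log(1+|x|^2)\,\de x$, it \emph{assumes} it; that moment is in fact finite under \eqref{eq:86}, because if $\int\rho_0\log_+|y|\,\de y=\infty$ then $V(0,x)=-\infty$ on the unit ball, contradicting $V(0)\in L^r_{loc}$---a point you should state rather than attribute to log-Sobolev.

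The one genuine gap is your treatment of the far part of $V$ at positive times. Hypothesis \eqref{eq:86} concerns $t=0$ only, and saying that $V_{\mathrm{far}}$ ``is kept in $L^r$-based norms by means of \eqref{eq:86}'' does not produce the bound $\sup_{0\leq t\leq T}\|V(t)\|_{L^r_x}<\infty$ that your own contraction and your final Strichartz step both use to absorb the term $V\nabla\psi$: for nonzero mass, $V_{\mathrm{far}}(t,x)$ behaves like $-\frac{M}{2\pi}\log|x|$ at infinity, so it lies in no $L^r(\R^2)$ a priori, and Young's inequality on the far kernel cannot repair this. The missing mechanism is exactly the paper's time-integration trick quoted above (continuity equation plus HLS on $\d_tV$), which you have all the ingredients for, since you already invoke the continuity equation---but only for the moment $\int\rho\log(1+|x|^2)\,\de x$, not for $V$ itself. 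Alternatively, staying inside your framework, one can estimate $V_{\mathrm{far}}\psi$ in $L^2$ by the weighted bound $|V_{\mathrm{far}}(t,x)|\lesssim M\log(1+|x|)+\int\rho(t,y)\log(1+|y|)\,\de y$, but then you need $\log(1+|x|)\psi\in L^2$, i.e.\ propagation of the stronger moment $\int\rho\log^2(1+|x|^2)\,\de x$; your Gronwall scheme does extend to it, because $\nabla\bigl(\log^2(1+|x|^2)\bigr)=2\log(1+|x|^2)\frac{2x}{1+|x|^2}$ is still bounded, but as written the proposal neither performs this step nor the paper's, so the potential term is not actually closed at any $t>0$.
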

\begin{proof}
Let us remark that now we're going to deal with the case in which the power-like nonlinearity $|\psi|^{p-1}\psi$ satisfies $1<p<\infty$; the case $p=1$ is straightforward. 
\newline
Firstly, from the logarithimic Sobolev inequality (see theorem \ref{thm:log_Sob}), we infer the boundedness of the initial energy, and furthermore, since it is conserved for all times, we have
\begin{equation*}
E(t)=E_0\lesssim\|\psi_0\|_{L^2}^2+\|\nabla\psi_0\|_{L^2}^2+\int\rho_0\log\rho_0\de x<\infty.
\end{equation*}
Secondly, since we have $\psi\in H^1(\R^2)$, from the Gagliardo-Nirenberg-Sobolev inequality \eqref{eq:GNS} we can say that $\|\psi\|_{L^p(\R^2)}$ for each 
$2\leq p<\infty$, and consequently $\rho\in L^p(\R^2)$ for each $1\leq p<\infty$.
\newline
Now, we want to give a local well-posedness result for the solutions to \eqref{eq:2D_NLS}. From standard arguments (see \cite{T}), we look for \emph{a priori} estimates of solutions to 
\eqref{eq:2D_NLS} in the Strichartz spaces. 
To this aim, we first need to know the mixed space-time Lebesgue spaces in which  the electrostatic potential lies. 
\begin{lemma}\label{lemma:elec_pot}
Let $(\psi, V)$ satisfy the Schr\"odinger-Poisson system \eqref{eq:2D_NLS} in $[0, T)\times\R^2$ for some $T>0$. Let us further assume that $V(0)\in L^r(\R^2)$, for some $2<p<\infty$. Then
\begin{equation}\label{eq:elec_pot_reg}
\left\{\begin{array}{l}
\nabla V\in L^\infty_tL^p_x([0, T)\times\R^2),\qquad\forall\;p\in[2, \infty),\\
V\in L^\infty_tL^r_x([0, T)\times\R^2).
\end{array}\right.
\end{equation}
\end{lemma}
\begin{remark}
Let us note that the first \emph{a priori} estimates in \eqref{eq:elec_pot_reg} for the gradient of the electrostatic potential don't imply any estimates on the potential $V$, since we can't use the Sobolev embedding theorem. Hence we need some additional assumptions on $V(0)$, namely hypothesis \eqref{eq:86}.
\end{remark}
\begin{proof}
Clearly by the elliptic regularity theory and by the Sobolev embedding theorem we have
\begin{equation}
\|\nabla V\|_{L^\infty L^2}\leq\||\nabla|^{-1}\rho\|_{L^\infty L^2}\lesssim\|\rho_0\|_{L^1}.
\end{equation}
Furthermore, by taking the gradient of \eqref{eq:elec_pot} we get
\begin{equation}
\nabla V(t, x)=-\frac{1}{2\pi}\int_{\R^2}\frac{x-y}{|x-y|^2}\rho(t, y)\de y,
\end{equation}
and by using the generalized Hardy-Littlewood-Sobolev's inequality we obtain that 
$\nabla V\in L^\infty([0, T); L^p(\R^2)), 2<p<\infty$. Resuming, we infer 
$\nabla V\in L^\infty([0, T); L^p(\R^2))$, for $2\leq p<\infty$.
\newline
Let us now differentiate with respect to time the formula \eqref{eq:elec_pot}, we thus obtain, by using the continuity equation,
\begin{equation*}
\d_tV(t, x)=-\frac{1}{2\pi}\int_{\R^2}\frac{x-y}{|x-y|^2}\cdot J(t, y)\de y,
\end{equation*}
and consequently
\begin{equation}
V(t, x)=V(0, x)-\frac{1}{2\pi}\int_0^t\int_{\R^2}\frac{x-y}{|x-y|^2}\cdot J(s, y)\de y\de s.
\end{equation}
Now, from the bounds on the energy and the Gagliardo-Nirenberg-Sobolev inequality we infer $J\in L^\infty([0, \infty); L^p(\R^2))$, for $1\leq p<2$, thus the generalized Hardy-Littlewood-Sobolev inequality yields $\d_tV\in L^\infty([0, \infty); L^r(\R^2))$, with 
$2<r<\infty$. Hence
\begin{equation}
\|V(t)\|_{L^r}\leq\|V(0)\|_{L^r}+t\|\d_tV\|_{L^\infty_tL^r_x([0, \infty)\times\R^2)},\qquad 2<r<\infty,
\end{equation}
and consequently $V\in L^\infty_tL^r([0, T)\times\R^2)$.
\end{proof}
Let us now come back now to the proof of theorem \ref{thm:2D_NLS}. By Strichartz estimates we have
\begin{multline*}
\|\nabla\psi\|_{L^q_tL^r_x([0, T)\times\R^2)}\lesssim\|\nabla\psi_0\|_{L^2(\R^2)}
+\||\psi|^{p-1}\nabla\psi\|_{L^{q_1'}_tL^{r_1'}_x([0, T)\times\R^2)}\\
+\|V\nabla\psi\|_{L^{q_2'}_tL^{r_2'}_x([0, T)\times\R^2)}
+\|\nabla V\psi\|_{L^{q_3'}_tL^{r_3'}_x([0, T)\times\R^2)},
\end{multline*}
where $(q_i', r_i')$ are dual exponents of an admissible pair for Schr\"odinger in $\R^2$, $i=1, 2, 3$.
Let us consider the second term in the right hand side, $\lplqs{|\psi|^{p-1}\nabla\psi}{q_1'}{r_1'}$. By H\"older's inequality we have
\begin{equation*}
\||\psi|^{p-1}\nabla\psi\|_{L^{q_1'}_tL^{r_1'}_x}\leq T^{1/q_1'}
\|\psi\|_{L^\infty_tL^{r(p-1)}}^{p-1}
\|\nabla\psi\|_{L^\infty_tL^2_x},
\end{equation*}
where we have the following algebraic conditions on the exponents $r_1', r$
\begin{equation*}
\left\{\begin{array}{l}
0<\1{r}\leq\frac{p-1}{2}\\
\mez\leq\1{r_1'}=\1{r}+\mez<1
\end{array}\right.
\end{equation*}
Then, by applying the Gagliardo-Nirenberg \eqref{eq:GNS} inequality we get
\begin{multline*}
\lplqs{|\psi|^{p-1}\nabla\psi}{q_1'}{r_1'}\leq T^{1/q_1'}\lplqs{\psi}{\infty}{r(p-1)}^{p-1}
\lplqs{\nabla\psi}{\infty}{2}\\
\lesssim T^{1/q_1'}\lplqs{\psi}{\infty}{2}^{\frac{2}{r}}\lplqs{\nabla\psi}{\infty}{2}^{\frac{r(p-1)-2}{r(p-1)}(p-1)}
\lplqs{\nabla\psi}{\infty}{2}
=T^{1/q_1'}\lplqs{\psi}{\infty}{2}^{\frac{2}{r}}\lplqs{\nabla\psi}{\infty}{2}^{\frac{rp-2}{r}}.
\end{multline*}
For the term $\lplqs{V\nabla\psi}{q_2'}{r_2'}$ we choose $(q_2', r_2')=(r', \frac{2r}{r+2})$, where $r$ is the exponent such that $V(0)\in L^r$, and hence
\begin{multline*}
\lplqs{V\nabla\psi}{r'}{\frac{2r}{r+2}}\leq T^{\1{r'}}\lplqs{V}{\infty}{r}\lplqs{\nabla\psi}{\infty}{2}\\
\lesssim T^{\1{r'}}(\|V(0)\|_{L^r}+T\|\d_tV\|_{L^\infty_tﬂ^r})\lplqs{\nabla\psi}{\infty}{2}.
\end{multline*}
Finally for the last term $\lplqs{\nabla V\psi}{q_3'}{r_3'}$ we can easily choose three exponents 
$r_3', p, p_1$, such that
\begin{equation*}
\lplqs{\nabla V\psi}{q_3'}{r_3'}\leq T^{\1{q_3'}}\lplqs{\nabla V}{\infty}{p}\lplqs{\psi}{\infty}{p_1}
\lesssim T^{\1{q_3'}}\lplqs{\nabla V}{\infty}{p}\lplqs{\psi}{\infty}{2}^{\frac{2}{p_1}}
\lplqs{\nabla\psi}{\infty}{2}^{1-\frac{2}{p_1}},
\end{equation*}
and
\begin{equation*}
\left\{\begin{array}{l}
0<\1{p}\leq\mez,\qquad0<\1{p_1}\leq\mez\\
\mez\leq\1{r_3'}=\1{p}+\1{p_1}<1.
\end{array}\right.
\end{equation*}
Hence we obtain the following estimate
\begin{equation}
\lplq{\nabla\psi}{q}{r}{[0, T)}\lesssim\|\nabla\psi_0\|_{L^2}+T^\alpha C(\|\psi_0\|_{L^2}, E_0),
\end{equation}
for some $\alpha>0$. 
Thus by a standard argument we can state there exists a 
$T^\ast=T^\ast(\|\psi_0\|_{L^2}, \|\nabla\psi_0\|_{L^2})$, such that there exists a unique solution
\begin{equation*}
\psi\in\cfun([0, T^\ast); H^1(\R^2))\cap L^q([0, T^\ast); L^r(\R^2))\qquad\forall\;(q, r)
\;\textrm{adimssible pair},
\end{equation*}
of the Cauchy problem for \eqref{eq:2D_NLS}. 
\newline
Furthermore, again by standard arguments, namely since the energy is conserved for every time, we can repeat the same argument for each space-time slab $[kT^\ast, (k+1)T^\ast]$, since $T^\ast$ depends only on the initial datum, and thus we can extend this solution globally. Hence theorem 
\ref{thm:2D_NLS} is proved.
\end{proof}
\section{Collisional case: the fractional step approximations}\label{sect:fract}
Following the approach in \cite{AM}, we want to use the tools presented in the previous sections  in order to construct a \emph{sequence of approximate solutions} to system \eqref{eq:QHD}.
\begin{definition}
 $\{(\rho^\tau, J^\tau)\}$ is a \emph{sequence of approximate solutions} to the system \eqref{eq:QHD}, with locally integrable initial data $(\rho_0, J_0)$, if 
$(\sqrt{\rho^\tau}, \Lambda^\tau)\in L^2_{loc}([0, T);H^1_{loc}(\R^2))\times L^2_{loc}([0, T);L^2_{loc}(\R^2))$, $\rho^\tau:=(\sqrt{\rho^\tau})^2$, $J^\tau:=\sqrt{\rho^\tau}\Lambda^\tau$, satisfy
\begin{itemize}
\item for any test function $\eta\in\cfun_0^\infty([0,T)\times\R^2)$
 \begin{equation}\label{eq:approx_QHD1}
 \int_0^T\int_{\R^2}\rho^\tau\d_t\eta+J^\tau\cdot\nabla\eta\de x\de t
 +\int_{\R^2}\rho_0\eta(0)\de x=o(1)
 \end{equation}
 as $\tau\to0$,
 \item for any test function $\zeta\in\cfun_0^\infty([0,T)\times\R^2;\R^2)$
 \begin{multline}\label{eq:approx_QHD2}
 \int_0^T\int_{\R^2}J^\tau\cdot\d_t\zeta+\Lambda^\tau\otimes\Lambda^\tau:\nabla\zeta
 +P(\rho^\tau)\diver\zeta
 -\rho^\tau\nabla V^\tau\cdot\zeta-J^\tau\cdot\zeta\\
 +\hbar^2\nabla\sqrt{\rho^\tau}\otimes\nabla\sqrt{\rho^\tau}:\nabla\zeta
 -\frac{\hbar^2}{4}\rho^\tau\Delta\diver\zeta\de x\de t+\int_{\R^2}J_0\cdot\zeta(0)\de x=o(1)
 \end{multline}
 as $\tau\to0$;
 \item \emph{(generalized irrotationality condition)} in the sense of distribution
 \begin{equation*}
 \nabla\wedge J^\tau=2\nabla\sqrt{\rho^\tau}\wedge\Lambda^\tau.
 \end{equation*}
 \end{itemize}
\end{definition}
Our fractional step method is based on an operator splitting method which composes the solution operator of the non-collisional problem with the approximate solution operator (semi-discrete explicit Euler scheme) of the collisional problem.  Given a (small) parameter $\tau>0$, we solve a non-collisional QHD problem as in Section \ref{sect:noncoll}, hence we solve the collisional problem without QHD. At this point in order to restart with the non-collisional QHD problem, the main difficulty which needs to be solved regards how to update the initial data at each time step. Indeed, as remarked in the previous section we are able to solve the non-collisional QHD only in the case of Cauchy data compatible with the Schr\"odinger picture. This fact imposes the necessity to reconstruct a wave function at each time step, which is made possible by the polar factorization of the wave function computed in the previous time step.
\newline
As previously remarked, this method requires special type of initial data $(\rho_0, J_0)$, namely we assume there exists $\psi_0\in H^1(\R^3)$, such that the hydrodynamic initial data are given via the Madelung transforms
\begin{equation*}
\rho_0=|\psi_0|^2,\qquad J_0=\hbar\im(\conj{\psi_0}\nabla\psi_0).
\end{equation*}
This assumption is physically relevant since it implies the compatibility of our solutions to the QHD problem with the wave mechanics approach. The iteration procedure can be defined in this following way. First of all, we take $\tau>0$, which will be the time mesh unit; therefore we define the approximate solutions in each time interval $[k\tau, (k+1)\tau)$, for any integer $k\geq0$.
\newline
At the first step, $k=0$, we solve the Cauchy problem for the Schr\"odinger-Poisson system
\begin{equation}\label{eq:NLS2}
\left\{\begin{array}{l}
i\hbar\d_t\psi^\tau+\frac{\hbar^2}{2}\Delta\psi^\tau=|\psi^\tau|^{p-1}\psi^\tau+V^\tau\psi^\tau\\
-\Delta V^\tau=|\psi^\tau|^2\\
\psi^\tau(0)=\psi_0
\end{array}\right.
\end{equation}
by looking for the restriction in $[0, \tau)$ of the unique strong solution 
$\psi\in\cfun(\R; H^1(\R^2))$ (see Section \ref{sect:noncoll}).
Let us define 
$\rho^\tau:=|\psi^\tau|^2, J^\tau:=\hbar\im(\conj{\psi^\tau}\nabla\psi^\tau)$, then, as shown in the previous section, $(\rho^\tau, J^\tau)$ is a weak solution to the non-collisional QHD system. 
Let us assume that we know $\psi^\tau$ in the space-time slab 
$[(k-1)\tau, k\tau)\times\R^2$, we want to set up a recursive method, hence we have to show how to define $\psi^\tau, \rho^\tau, J^\tau$ in the strip $[k\tau, (k+1)\tau)$.
\newline
In order to take into account the presence of the collisional term $f=J$ we update $\psi^\tau$ in $t=k\tau$, namely we define $\psi^\tau(k\tau+)$, in such a way that its hydrodynamical quantities satisfy
\begin{equation}\label{eq:87}
\rho^\tau(k\tau+)=\rho^\tau(k\tau-),\qquad J^\tau(k\tau+)=(1-\tau)J^\tau(k\tau-),
\end{equation}
which means, the hydrodynamical quantities approximately solve (as a first order approximation) the collisional problem without QHD. This could be done by means of the polar decomposition, described in Section \ref{sect:polar}. Indeed, denote by
\begin{equation*}
\psi^\tau(k\tau-)=\phi^\tau_{k}\sqrt{\rho^\tau}(k\tau-),
\end{equation*}
where clearly $\phi^\tau_k$ is a polar factor for the wave function $\psi^\tau(k\tau-)$. Hence a way of defining $\psi^\tau(k\tau+)$ might be the following one
\begin{equation}\label{eq:88}
\psi^\tau(k\tau+)=(\phi^\tau_k)^{(1-\tau)}\sqrt{\rho^\tau}(k\tau-),
\end{equation}
in this way the equations \eqref{eq:87} would be exactly fulfilled, but
unfortunately, as we will see in the next Section, this way of updating the wave function $\psi^\tau$ presents some problems, due to the lack of differentiability of the polar factor $\phi$, when we try to get \emph{a priori} bounds for the sequence of approximate solutions. In order to circumvent this difficulty we will use approximations for the updating in \eqref{eq:88} which are stable in the energy norm, namely we will apply Lemma \ref{lemma:updat} in the Appendix, with $\psi=\psi^\tau(k\tau-)$, 
$\eps=\tau2^{-k}\|\psi_0\|_{H^1(\R^2)}$. In this way we can define
\begin{equation}\label{eq:125}
\psi^\tau(k\tau+)=\tilde\psi,
\end{equation}
by using the wave function $\tilde\psi$ defined in the Lemma \ref{lemma:updat}. Therefore we have
\begin{align}
\rho^\tau(k\tau+)&=\rho^\tau(k\tau-)\\
\Lambda^\tau(k\tau+)&=(1-\tau)\Lambda^\tau(k\tau-)+R_k\label{eq:126},
\end{align}
where $\|R_k\|_{L^2(\R^2)}\leq\tau2^{-k}\|\psi_0\|_{H^1(\R^2)}$ and
\begin{equation}\label{eq:127}
\nabla\psi^\tau(k\tau+)=\nabla\psi^\tau(k\tau-)-i\frac{\tau}{\hbar}\phi^\ast\Lambda^\tau(k\tau-)+r_{k, \tau},
\end{equation}
with $\|\phi^\ast\|_{L^\infty}\leq1$ and
\begin{equation*}
 \|r_{k, \tau}\|_{L^2}\leq C(\tau\|\nabla\psi^\tau(k\tau-)\|+\tau2^{-k}\|\psi_0\|_{H^1(\R^2)})
 \lesssim\tau E_0^{\mez}.
 \end{equation*}
 Let us remark that \eqref{eq:127} will play a fundamental role to prove Strichartz-type estimates for the approximating sequence.
\newline
We then solve again the Schr\"odinger-Poisson system with initial data $\psi(0)=\psi^\tau(k\tau+)$. We define $\psi^\tau$ in the time strip $[k\tau, (k+1)\tau)$ as the restriction of the Schr\"odinger-Poisson solution just found in $[0, \tau)$, furthermore, we define $\rho^\tau:=|\psi^\tau|^2, 
J^\tau:=\hbar\im(\conj{\psi^\tau}\nabla\psi^\tau)$ as the solution of the non-collisional QHD 
\eqref{eq:QHD_farl}.
\newline
With this procedure  we can go on every time strip and then construct an approximate solution 
$(\rho^\tau, J^\tau, V^\tau)$ of the QHD system.
\begin{theorem}[Consistency of the approximate solutions]\label{thm:cons}
Let us consider the sequence of approximate solutions $\{(\rho^\tau, J^\tau)\}_{\tau>0}$ constructed via the fractional step method. Assume there exists 
$(\sqrt{\rho}, \Lambda)\in L^2_{loc}([0, T);H^1_{loc}(\R^2))\times L^2_{loc}([0, T);L^2_{loc}(\R^2))$, such that
\begin{align}
\sqrt{\rho^\tau}&\to\sqrt{\rho}\qquad\textrm{in}\;L^2([0, T);H^1_{loc}(\R^2))\\
\Lambda^\tau&\to\Lambda\qquad\textrm{in}\;L^2([0, T); L^2_{loc}(\R^2)).
\end{align}
Then the limit function $(\rho, J)$, where as before $J=\sqrt{\rho}\Lambda$, is a FEWS to the QHD system, with Cauchy data 
$(\rho_0, J_0)$.
\end{theorem}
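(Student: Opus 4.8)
The plan is to argue in two stages: first to verify that the fractional step construction genuinely produces a \emph{sequence of approximate solutions}, i.e.\ that \eqref{eq:approx_QHD1} and \eqref{eq:approx_QHD2} hold with an $o(1)$ error as $\tau\to0$ (the consistency proper), and then to pass to the limit in these relations by means of the assumed strong convergence, recovering the exact weak formulation \eqref{eq:QHD1}, \eqref{eq:QHD2}. The starting observation is that on each open slab $(k\tau,(k+1)\tau)$ the pair $(\rho^\tau,J^\tau)$ is, by construction, the hydrodynamic projection of an exact solution of the Schr\"odinger--Poisson system \eqref{eq:2D_NLS}, hence it satisfies the \emph{non-collisional} QHD weak formulation exactly inside the slab; all the genuinely new contributions are therefore concentrated at the interfaces $t=k\tau$, where the updating \eqref{eq:87}, \eqref{eq:126}, \eqref{eq:127} has been performed.

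For the continuity equation this is immediate: since $\rho^\tau(k\tau+)=\rho^\tau(k\tau-)$ the density carries no jump across the interfaces, so summing the slab-wise identities telescopes cleanly and \eqref{eq:approx_QHD1} holds, with the exact initial term $\int\rho_0\eta(0)$ and no residual. For the momentum equation I would compute the distributional jump of $J^\tau$ at $t=k\tau$: from \eqref{eq:126} and $J^\tau=\sqrt{\rho^\tau}\Lambda^\tau$ one has $J^\tau(k\tau+)-J^\tau(k\tau-)=-\tau J^\tau(k\tau-)+\sqrt{\rho^\tau}R_k$. Testing against $\zeta$ and summing over the $O(T/\tau)$ slabs, the leading contributions $-\tau\sum_k\int_{\R^2}J^\tau(k\tau)\cdot\zeta(k\tau)\,dx$ form a Riemann sum that reconstructs precisely the collision term $-\int_0^T\!\!\int_{\R^2}J^\tau\cdot\zeta$ appearing in \eqref{eq:approx_QHD2}, with a quadrature error of size $O(\tau)$ controlled by the uniform energy bound; the remainder $\sum_k\sqrt{\rho^\tau}R_k$ is estimated using $\|R_k\|_{L^2}\le\tau2^{-k}\|\psi_0\|_{H^1}$, whose geometric summability beats the number of slabs and again yields an $O(\tau)$ total. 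This establishes \eqref{eq:approx_QHD2} up to $o(1)$.

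The second stage is the passage to the limit. The purely local, quadratic terms are handled by H\"older's inequality together with the strong convergences in the hypothesis: $\Lambda^\tau\otimes\Lambda^\tau\to\Lambda\otimes\Lambda$ and $\nabla\sqrt{\rho^\tau}\otimes\nabla\sqrt{\rho^\tau}\to\nabla\sqrt{\rho}\otimes\nabla\sqrt{\rho}$ strongly in $L^1_{loc}$, while $\rho^\tau=(\sqrt{\rho^\tau})^2\to\rho$ and, using the two-dimensional embedding $H^1\hookrightarrow L^q$ ($q<\infty$) behind \eqref{eq:GNS}, also $P(\rho^\tau)\to P(\rho)$ and $J^\tau=\sqrt{\rho^\tau}\Lambda^\tau\to J$ in $L^1_{loc}$; the collision and dispersive terms $J^\tau\cdot\zeta$ and $\rho^\tau\Delta\diver\zeta$ pass to the limit likewise, and the initial data terms are fixed. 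The irrotationality condition \eqref{eq:irrot} is inherited from the identity \eqref{eq:119} of Lemma \ref{lemma:phase}, which holds for each $\psi^\tau$ and is stable under the strong convergence of $\nabla\sqrt{\rho^\tau}$ and $\Lambda^\tau$; finally finiteness of the energy \eqref{eq:en} for a.e.\ $t$ follows from the uniform-in-$\tau$ energy bound furnished by the construction together with weak lower semicontinuity.

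The step I expect to be the main obstacle is the nonlocal electrostatic term $\rho^\tau\nabla V^\tau$, for which local strong convergence alone does not suffice. Here I would invoke Lemma \ref{lemma:elec_pot} to obtain the uniform bounds $\nabla V^\tau\in L^\infty_tL^p_x$ ($2\le p<\infty$), extract a weak-$\ast$ limit, and identify it with $\nabla V$ through the explicit kernel $-\frac{1}{2\pi}\frac{x-y}{|x-y|^2}$ using the strong $L^1_{loc}$ convergence of $\rho^\tau$; the control of the slowly decaying tail of the kernel is exactly where the structural assumptions $\int\rho_0\log\rho_0<\infty$ and $V(0)\in L^r$ enter. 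Combining the strong convergence of $\rho^\tau$ with the weak-$\ast$ convergence of $\nabla V^\tau$ then lets me pass to the limit in the product tested against the compactly supported $\zeta$, completing the identification of the limit as a FEWS.
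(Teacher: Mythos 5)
Your proposal is correct and its core coincides with the paper's own proof: the paper's argument is precisely your first stage --- exactness of the non-collisional weak formulation on each slab $[k\tau,(k+1)\tau)$, telescoping of the interface terms, the jump $J^\tau(k\tau+)-J^\tau(k\tau-)=-\tau J^\tau(k\tau-)+\sqrt{\rho^\tau}\,R_k$ coming from \eqref{eq:126}, the resulting Riemann sum $\tau\sum_k\int J^\tau(k\tau-)\cdot\zeta(k\tau)\,\de x$ reconstructing the collision term $-\int\!\!\int J^\tau\cdot\zeta$ up to $o(1)$, and the geometrically summable remainder $\sum_k\|\sqrt{\rho^\tau}R_k\|_{L^1}\lesssim\sum_k\tau2^{-k}\|\psi_0\|_{H^1}=O(\tau)$. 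Your second stage (limit passage in the quadratic terms, pressure, Poisson term via Lemma \ref{lemma:elec_pot}, the irrotationality condition through \eqref{eq:119}, and lower semicontinuity of the energy) is left implicit in the paper as a routine consequence of the assumed strong convergences, and your treatment of it is sound.
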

\begin{proof}
Let us plug the approximate solutions $(\rho^\tau, J^\tau)$ in the weak formulation and let 
$\zeta\in\cfun_0^\infty([0, T)\times\R^2)$, then we have
\begin{gather*}
\begin{split}
\int_0^\infty\int_{\R^2}J^\tau\cdot\d_t\zeta+\Lambda^\tau\otimes\Lambda^\tau:\nabla\zeta
+P(\rho^\tau)\diver\zeta-\rho^\tau\nabla V^\tau\cdot\zeta-J^\tau\cdot\zeta\\
+\hbar^2\nabla\sqrt{\rho^\tau}\otimes\nabla\sqrt{\rho^\tau}:\nabla\zeta
-\frac{\hbar^2}{4}\rho^\tau\Delta\diver\zeta\de x\de t+\int_{\R^2}J_0\cdot\zeta(0)\de x
\end{split}\\
\begin{split}
=\sum_{k=0}^\infty\slabint J^\tau\cdot\d_t\zeta+\Lambda^\tau\otimes\Lambda^\tau:\nabla\zeta
+P(\rho^\tau)\diver\zeta-\rho^\tau\nabla V^\tau\cdot\zeta-J^\tau\cdot\zeta\\
+\hbar^2\nabla\sqrt{\rho^\tau}\otimes\nabla\sqrt{\rho^\tau}:\nabla\zeta
-\frac{\hbar^2}{4}\rho^\tau\Delta\diver\zeta\de x\de t
+\int_{\R^2}J_0\cdot\zeta(0)\de x
\end{split}\\
\begin{split}
=\sum_{k=0}^\infty\bigg[\slabint-J^\tau\cdot\zeta\de x\de t
+\int_{\R^2}J^\tau((k+1)\tau-)\cdot\zeta((k+1)\tau)\\
-J^\tau(k\tau)\cdot\zeta(k\tau)\de x\bigg]
+\int_{\R^2}J_0\cdot\zeta(0)\de x
\end{split}\\
\end{gather*}
\begin{gather*}
\begin{split}
=\sum_{k=0}^\infty\slabint-J^\tau\cdot\zeta\de x\de t
+\sum_{k=1}^\infty\int_{\R^2}\pare{J^\tau(k\tau-)-J^\tau(k\tau)}\cdot\zeta(k\tau)\de x
\end{split}\\
\begin{split}
=\sum_{k=0}^\infty\slabint-J^\tau\cdot\zeta\de x\de t
+\sum_{k=1}^\infty\tau\int_{\R^2}J^\tau(k\tau-)\cdot\zeta(k\tau)\de x\\
+\sum_{k=1}^\infty\int_{\R^2}(1-\tau)\sqrt{\rho^\tau(k\tau)}R_k\cdot\zeta(k\tau)\de x
\end{split}\\
\begin{split}
=\sum_{k=0}^\infty\slabint J^\tau((k+1)\tau-)\cdot\zeta((k+1)\tau)-J^\tau(t)\cdot\zeta(t)\de x\de t\\
+(1-\tau)\sum_{k=0}^\infty\int_{\R^2}\sqrt{\rho^\tau}(k\tau)R_k\cdot\zeta(k\tau)\de x
\end{split}\\
=o(1)+O(\tau)
\end{gather*}
as $\tau\to0$.
\end{proof}
\section{a priori estimates and convergence}\label{sect:apr}
In this section we obtain various a priori estimates needed to show the compactness of the sequence of approximate solutions $(\rho^\tau, J^\tau)$ in some appropriate function spaces. As we stated in the Theorem \ref{thm:cons}, we wish to prove the strong convergence of $\{\sqrt{\rho^\tau}\}$ in $L^2_{loc}([0, T);H^1_{loc}(\R^2))$ and 
$\{\Lambda^\tau\}$ in $L^2_{loc}([0, T);L^2_{loc}(\R^2))$. To achieve this goal we use a compactness result in the class of the Aubin-Lions's type lemma, due to Rakotoson-Temam \cite{RT} (see the Section 2). The plan of this section is the following, first of all we get a discrete version of the (dissipative) energy inequality for the system \eqref{eq:QHD}, then we use the Strichartz estimates for $\nabla\psi^\tau$ by means of the formula \eqref{eq:131} below. Consequently via the Strichartz estimates and by using the local smoothing results of Theorems \ref{thm:smooth1}, \ref{thm:smooth2}, we deduce some further regularity properties on the sequence $\{\nabla\psi^\tau\}$. This fact will be stated in the next Proposition 
\ref{prop:28}. In this way it is possible to get the regularity properties of the sequence $\{\nabla\psi^\tau\}$ which are needed to apply the Theorem \ref{thm:comp} and hence to get the convergence for $\{\sqrt{\rho^\tau}\}$ and $\{\Lambda^\tau\}$.
\newline
Concerning the energy inequality we notice that for a sufficiently regular solution to the QHD system one has
\begin{equation}\label{eq:en_diss}
E(t)=-\int_0^t\int_{\R^2}|\Lambda|^2\de x\de t'+E_0,
\end{equation}
where the energy is defined as in \eqref{eq:en}.
Now we would like to find a discrete version of the energy dissipation \eqref{eq:en_diss} for the approximate solutions to \eqref{eq:QHD}. 
\begin{proposition}[Discrete energy inequality]
Let $(\rho^\tau, J^\tau)$ be an approximate solution to the QHD system, with $0<\tau<1$. Then, for all 
$t\in[N\tau, (N+1)\tau)$ it follows
\begin{equation}\label{eq:en-diss}
E^\tau(t)\leq-\frac{\tau}{2}\sum_{k=1}^N\|\Lambda(k\tau-)\|_{L^2(\R^2)}+(1+\tau)E_0.
\end{equation}
\end{proposition}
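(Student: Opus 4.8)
The plan is to track the energy across a single time step and then iterate. Recall that within each open strip $(k\tau,(k+1)\tau)$ the approximate solution solves the non-collisional problem, for which Proposition \ref{prop:19} guarantees the energy $E(t)$ is exactly conserved. Hence all the decrease in $E^\tau$ must come from the updating step at the nodes $t=k\tau$, where the current is multiplied by $(1-\tau)$. So the heart of the matter is to estimate the jump $E^\tau(k\tau+)-E^\tau(k\tau-)$ produced by the map described in \eqref{eq:126}.

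First I would compute this jump explicitly. Since $\rho^\tau(k\tau+)=\rho^\tau(k\tau-)$ by \eqref{eq:87}, the quantum, internal, and Poisson parts of the energy \eqref{eq:en} are unchanged at the node; only the kinetic term $\mez\int|\Lambda|^2$ is affected. Using $\Lambda^\tau(k\tau+)=(1-\tau)\Lambda^\tau(k\tau-)+R_k$ from \eqref{eq:126}, I would expand
\begin{equation*}
\mez\int_{\R^2}|\Lambda^\tau(k\tau+)|^2\de x
=\mez(1-\tau)^2\int_{\R^2}|\Lambda^\tau(k\tau-)|^2\de x
+O\!\pare{\|R_k\|_{L^2}\|\Lambda^\tau(k\tau-)\|_{L^2}}+O\!\pare{\|R_k\|_{L^2}^2}.
\end{equation*}
The leading term gives a decrease $\mez\sqb{(1-\tau)^2-1}\|\Lambda(k\tau-)\|_{L^2}^2=\mez(\tau^2-2\tau)\|\Lambda(k\tau-)\|_{L^2}^2\le-\tau\|\Lambda(k\tau-)\|_{L^2}^2$ up to the $O(\tau^2)$ piece, which is where the $-\frac{\tau}{2}\sum_k\|\Lambda(k\tau-)\|_{L^2}$ contribution originates. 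The remainder terms are controlled by the bound $\|R_k\|_{L^2}\le\tau2^{-k}\|\psi_0\|_{H^1}$ supplied by Lemma \ref{lemma:updat}; summing the geometric factor $2^{-k}$ keeps the total error finite and, together with the a priori control $\|\Lambda(k\tau-)\|_{L^2}\lesssim E_0^{\mez}$, contributes the $\tau E_0$ term absorbed into $(1+\tau)E_0$.

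Then I would telescope. Writing $E^\tau(t)$ for $t\in[N\tau,(N+1)\tau)$ as $E^\tau(0+)$ plus the sum of the (non-positive) jumps at $k\tau$ for $k=1,\dots,N$, and using energy conservation of Proposition \ref{prop:19} on each open strip, the conserved pieces cancel and one is left with $E_0$ plus the accumulated jumps. Collecting the leading jump terms yields $-\tau\sum_{k=1}^N\|\Lambda(k\tau-)\|_{L^2}^2$ (of which I keep $-\frac{\tau}{2}\sum_{k=1}^N$ and sacrifice the other half to dominate the $\tau^2$ corrections), while the cross and quadratic $R_k$ errors, summed against the geometric weights, are bounded by $\tau E_0$ and folded into the final $(1+\tau)E_0$.

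The main obstacle I anticipate is bookkeeping rather than any deep inequality: one must verify that the quantum and Poisson parts of the energy genuinely do not jump at the nodes (this rests on $\rho$ being preserved by the update, \eqref{eq:87}), and that the regularized updating of Lemma \ref{lemma:updat} does not spoil this while still producing the controllable $R_k$. A secondary subtlety is justifying the a priori bound $\|\Lambda(k\tau-)\|_{L^2}\lesssim E_0^{\mez}$ uniformly in $k$ and $\tau$, which one gets by an induction that runs in parallel with the energy estimate itself, so I would set up the telescoping and the a priori bound together rather than sequentially.
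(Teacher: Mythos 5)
Your proposal is correct and follows essentially the paper's own argument: only the kinetic term $\mez\int|\Lambda^\tau|^2\de x$ jumps at the nodes since $\rho^\tau$ is preserved there, the jump is expanded via \eqref{eq:126} into the $(1-\tau)^2$ contraction plus cross and quadratic $R_k$ terms, the errors are summed using $\|R_k\|_{L^2}\leq\tau2^{-k}\|\psi_0\|_{H^1}$, and energy conservation on each strip $[k\tau,(k+1)\tau)$ gives the telescoping. The only (inessential) difference is in the cross term: the paper absorbs $2(1-\tau)\int\Lambda^\tau(k\tau-)\cdot R_k\de x$ by Young's inequality with weight $\alpha=\tau$, yielding the self-contained per-step bound $-\frac{\tau}{2}\|\Lambda^\tau(k\tau-)\|_{L^2}^2+\frac{1}{2\tau}\|R_k\|_{L^2}^2$ with no a priori control of $\|\Lambda^\tau(k\tau-)\|_{L^2}$ needed, whereas you bound it using $\|\Lambda^\tau(k\tau-)\|_{L^2}\lesssim E_0^{1/2}$ obtained by the induction you correctly note must run in parallel with the energy estimate.
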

\begin{proof}
For all $k\geq1$, we have
\begin{align*}
E^\tau(k\tau+)-E^\tau(k\tau-)=&
\int\mez|\Lambda^\tau(k\tau+)|^2-\mez|\Lambda^\tau(k\tau-)|^2\de x\\
=&\mez\int(-2\tau+\tau^2)|\Lambda^\tau(k\tau-)|^2\\
&\qquad+2(1-\tau)\Lambda^\tau(k\tau-)\cdot R_k+|R_k|^2\de x\\
\leq&\mez\int(-2\tau+\tau^2)|\Lambda^\tau(k\tau-)|^2\de x+(1-\tau)\alpha|\Lambda^\tau(k\tau-)|^2\\
&\qquad+\frac{1-\tau}{\alpha}|R_k|^2+|R_k|^2\de x\\
=&\mez\int(-2\tau+\tau^2+\alpha-\alpha\tau)|\Lambda^\tau(k\tau-)|^2\\
&\qquad+\pare{\frac{1-\tau+\alpha}{\alpha}}|R_k|^2\de x.
\end{align*}
Here $R_k$ denotes the error term as in espression \eqref{eq:126}.
If we choose $\alpha=\tau$, it follows
\begin{multline}\label{eq:124}
E^\tau(k\tau+)-E^\tau(k\tau-)\leq-\frac{\tau}{2}\|\Lambda^\tau(k\tau-)\|_{L^2}^2
+\1{2\tau}\|R_k\|^2_{L^2}\\
\leq-\frac{\tau}{2}\|\Lambda^\tau(k\tau-)\|^2_{L^2}+\tau2^{-k-1}\|\psi_0\|_{H^1}.
\end{multline}
The inequality \eqref{eq:en-diss} follows by summing up all the terms in \eqref{eq:124} and by the energy conservation in each time strip $[k\tau, (k+1)\tau)$.
\end{proof}
Unfortunately the energy estimates are not sufficient to get the necessary compactness to show the convergence of the sequence of approximate solutions. Indeed from the discrete energy inequality, we only get the weak convergence of $\nabla\psi^\tau$ in $L^\infty([0, \infty);H^1(\R^2))$, and therefore the quadratic terms in \eqref{eq:approx_QHD2} could exhibit some concentrations phenomena in the limit.
\newline
More precisely, 
from energy inequality we get the sequence $\{\psi^\tau\}$ is uniformly bounded in 
$L^\infty([0,\infty);H^1(\R^2))$, hence there exists $\psi\in L^\infty([0, \infty);H^1(\R^2))$, such that
\begin{equation*}
\psi^\tau\wlim\psi\qquad\textrm{in}\;L^\infty([0,\infty);H^1(\R^2)).
\end{equation*}
Therefore we get
\begin{align*}
\sqrt{\rho^\tau}&\wlim\sqrt{\rho}\qquad\textrm{in}\;L^\infty([0,\infty);H^1(\R^2))\\
\Lambda^\tau&\wlim\Lambda\qquad\textrm{in}\;L^\infty([0,\infty);L^2(\R^2))\\
J^\tau&\wlim\sqrt{\rho}\Lambda\qquad\textrm{in}\;L^\infty([0, \infty);L^1_{loc}(\R^2)).
\end{align*}
The need to pass into the limit the quadratic expressions leads us to look for a priori estimates in stronger norms. The link with Schr\"odinger equation brings naturally into this search the Strichartz-type estimates and the following results are concerned with these estimates. However they are \emph{not} an immediate consequence of the Strichartz estimates for the Schr\"odinger equation since the wave function obtained in the updating procedure implement in the fractional step in general does not solve any Sch\"odinger equation.
\newline
The following Remark \ref{rmk:25} will clarify in detail why we used the result in Lemma \ref{lemma:updat} to update the wave function in place of using the exact factorization property as in equation \eqref{eq:88}.
\begin{lemma}\label{lemma:28}
Let $\psi^\tau$ be the wave function defined by the fractional step method (see Section 
\ref{sect:fract}), and let $t\in[N\tau, (N+1)\tau)$. Then we have
\begin{multline}\label{eq:131}
\nabla\psi^\tau(t)=U(t)\nabla\psi_0
-i\frac{\tau}{\hbar}\sum_{k=1}^NU(t-k\tau)\pare{\phi^\tau_k\Lambda^\tau(k\tau-)}\\
-i\int_0^tU(t-s)F(s)\de s+\sum_{k=1}^NU(t-k\tau)r^\tau_k,
\end{multline}
where as before $U(t)$ is the free Schr\"odinger group, 
\begin{equation}
\|\phi^\tau_k\|_{L^\infty(\R^2)}\leq1, \quad\|r^\tau_k\|_{L^2(\R^2)}\leq\tau\|\psi_0\|_{H^1(\R^2)}
\end{equation}
and $F=\nabla(|\psi^\tau|^{p-1}\psi^\tau+V^\tau\psi^\tau)$.
\end{lemma}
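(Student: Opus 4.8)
The plan is to establish the representation \eqref{eq:131} by induction on the number $N$ of completed time strips, exploiting on each slab $[k\tau,(k+1)\tau)$ the Duhamel formula for the Schr\"odinger--Poisson evolution \eqref{eq:NLS2}, and gluing consecutive slabs together by means of the updating relation \eqref{eq:127} and the group property of $U(\cdot)$. The point to keep in mind is that $\psi^\tau$ does \emph{not} solve a single Schr\"odinger equation on all of $[0,t]$: it is a Schr\"odinger--Poisson solution only on each open slab, and at every node $t=k\tau$ it undergoes the jump prescribed by the fractional step method. Hence one cannot apply Duhamel once; instead the free evolutions must be composed and the jumps accounted for one at a time.

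First I would record the one--slab formula. Since $\nabla$ commutes with the Fourier multiplier $U(\cdot)$ and $F=\nabla(|\psi^\tau|^{p-1}\psi^\tau+V^\tau\psi^\tau)$ is the gradient of the nonlinearity (the constant $\hbar^{-1}$ being absorbed into $F$), the variation of constants formula applied to \eqref{eq:NLS2} on $[k\tau,(k+1)\tau)$ gives, for $t$ in that slab,
\begin{equation*}
\nabla\psi^\tau(t)=U(t-k\tau)\nabla\psi^\tau(k\tau+)-i\int_{k\tau}^tU(t-s)F(s)\,\de s.
\end{equation*}
For $k=0$ one has $\psi^\tau(0+)=\psi_0$ and both sums in \eqref{eq:131} are empty, so this is exactly \eqref{eq:131} with $N=0$.

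For the inductive step I would assume \eqref{eq:131} on the slab $[(N-1)\tau,N\tau)$; letting $t\uparrow N\tau$ there produces $\nabla\psi^\tau(N\tau-)$ in the claimed form. I then insert the updating relation \eqref{eq:127},
\begin{equation*}
\nabla\psi^\tau(N\tau+)=\nabla\psi^\tau(N\tau-)-i\frac{\tau}{\hbar}\phi^\ast\Lambda^\tau(N\tau-)+r_{N,\tau},
\end{equation*}
into the one--slab formula with $k=N$, and use the identities $U(t-N\tau)U(N\tau)=U(t)$ and $U(t-N\tau)U(N\tau-s)=U(t-s)$ to merge the propagated datum into $U(t)\nabla\psi_0$ and the two Duhamel integrals into the single integral $\int_0^tU(t-s)F(s)\,\de s$ (legitimate since the integrand is the same global function off the measure--zero set of nodes). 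The freshly produced terms $-i\frac{\tau}{\hbar}U(t-N\tau)\phi^\ast\Lambda^\tau(N\tau-)$ and $U(t-N\tau)r_{N,\tau}$ are exactly the $k=N$ summands of the two sums in \eqref{eq:131}, with $\phi^\tau_N:=\phi^\ast$ and $r^\tau_N:=r_{N,\tau}$, closing the induction.

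The stated size bounds come directly from the construction of Section \ref{sect:fract}: $\|\phi^\tau_k\|_{L^\infty}\leq1$ is guaranteed by Lemma \ref{lemma:updat}, while $\|r^\tau_k\|_{L^2}\lesssim\tau\|\psi_0\|_{H^1}$ follows from the error estimate for $r_{k,\tau}$ in \eqref{eq:127} together with the uniform energy bound $\|\nabla\psi^\tau(k\tau-)\|_{L^2}\lesssim E_0^{1/2}$. The only genuinely delicate part is the bookkeeping of the jumps, i.e.\ checking that each node contributes exactly one summand to each sum and that no jump correction is erroneously re--propagated through a later node; this is precisely what the telescoping of the group property across the slabs ensures.
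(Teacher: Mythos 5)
Your proposal is correct and takes essentially the same route as the paper: the paper likewise writes the one-slab Duhamel formula $\nabla\psi^\tau(t)=U(t-N\tau)\nabla\psi^\tau(N\tau+)-i\int_{N\tau}^tU(t-s)F(s)\,\de s$, substitutes the updating relation \eqref{eq:127} (with $\phi^\tau_N=e^{i(1-\tau)\theta_N}$ supplied by Lemma \ref{lemma:updat}, whence $\|\phi^\tau_N\|_{L^\infty}\leq1$), and then iterates backward through the slabs using the group property of $U(\cdot)$ --- which is exactly your forward induction read in reverse. Your handling of the error bound for $r^\tau_k$ via the estimate \eqref{eq:122} with $\eps=\tau2^{-k}\|\psi_0\|_{H^1(\R^2)}$ and the uniform energy bound also matches the paper's treatment.
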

\begin{remark}\label{rmk:25}
Now we can see more in details why the updating step in \eqref{eq:125} has been defined through the Lemma 
\ref{lemma:updat}. 
Indeed, one could possibly try to use the exact polar factorization as in the Lemma \ref{lemma:phase}, namely 
$\psi^\tau(k\tau+):=\phi^{(1-\tau)}\sqrt{\rho}$, where $\phi, \sqrt{\rho}$ are respectively the unitary factor and the amplitude of $\psi^\tau(k\tau-)$ (note that $\phi$ is uniquely determined $\sqrt{\rho}\de x-$a.e., and 
$|\phi|=1$, $\sqrt{\rho}\de x$-a.e.), as we anticipated in Section \ref{sect:fract}, see equation \eqref{eq:88}. 
In this case for the hydrodynamic quantities we get the exact updating formulas
\begin{align*}
|\psi^\tau(k\tau+)|^2&=|\psi^\tau(k\tau)|^2\\
\hbar\im(\conj{\psi^\tau}\nabla\psi^\tau)(k\tau+)&=(1-\tau)\hbar\im(\conj{\psi^\tau}\nabla\psi^\tau)(k\tau-).
\end{align*}
In order to obtain Strichartz-type estimates for the approximating sequence we need to compute $\nabla\psi^\tau$, the gradient of the approximated wave function. This calculation involves at time $t$, via the Duhamel's formula, the updates done at each $k\tau<t$:
\begin{align*}
\nabla\psi^\tau(t)=&U(t-N\tau)\sigma^\tau_{N\tau}U(\tau)\dotsc\sigma^\tau_\tau U(\tau)\nabla\psi_0\\
&-i\frac{\tau}{\hbar}U(t-N\tau)\sigma^\tau_{N\tau}\Lambda^\tau(N\tau-)
+\dotsc-i\frac{\tau}{\hbar}U(t-N\tau)\dotsc U(\tau)\phi^\tau_\tau\Lambda^\tau(\tau-)\\
&-i\int_{N\tau}^tU(t-s)F(s)\de s
-iU(t-N\tau)\sigma^\tau_{N\tau}\int_{(N-1)\tau}^{N\tau}U(N\tau-s)F(s)\de s\\
&+\dotsc
-iU(t-N\tau)\sigma^\tau_{N\tau}U(\tau)\dotsc\sigma^\tau_\tau\int_0^\tau U(\tau-s)F(s)\de s,
\end{align*}
where $\sigma^\tau_{k\tau}=(\phi^\tau_{k\tau})^{-\tau}$, $\phi^\tau_{k\tau}$ being the polar factor of 
$\psi^\tau(k\tau-)$.
Now, to use the group properties of the free Schr\"odinger operator, we need to pack all the $U(\cdot)$'s and all the $\sigma^\tau$'s. The estimates of these commutators $[U(\cdot), \sigma^\tau]$ are in general not true for $\sigma^\tau_{k\tau}$ which are only in $L^\infty(\R^2)$ (see \cite{Tay}).
\end{remark}
\begin{proof}
Since $\psi^\tau$ is solution of the Schr\"odinger-Poisson system in the space-time slab 
$[N\tau, (N+1)\tau)\times\R^2$, then we can write
\begin{equation}\label{eq:111}
\nabla\psi^\tau(t)=U(t-N\tau)\nabla\psi^\tau(N\tau+)-i\int_{N\tau}^tU(t-s)F(s)\de s,
\end{equation}
where $F$ is defined in the statement of the Lemma \ref{lemma:28}. Now there exists a piecewise smooth function 
$\theta_N$, as specified in the proof of the Lemma \ref{lemma:updat}, such that
\begin{equation*}
\psi(N\tau+)=e^{i(1-\tau)\theta_N}\sqrt{\rho_n},
\end{equation*}
and furthermore all the estimates of the Lemma \ref{lemma:updat} hold, with $\psi=\psi^\tau(N\tau-)$, 
$\tilde\psi=\psi^\tau(N\tau+)$ and $\eps=2^{-N}\tau\|\psi_0\|_{H^1(\R^2)}$. Therefore, we have
\begin{equation}\label{eq:112}
\nabla\psi^\tau(N\tau+)=\nabla\psi^\tau(N\tau-)
-i\frac{\tau}{\hbar}e^{i(1-\tau)\theta_N}\Lambda^\tau(N\tau-)+r^\tau_N,
\end{equation}
where $\|r^\tau_N\|_{L^2}\leq\tau\|\psi_0\|_{H^1}$.
By plugging \eqref{eq:112} into \eqref{eq:111} we deduce
\begin{multline*}
\nabla\psi^\tau(t)=U(t-N\tau)\nabla\psi^\tau(N\tau-)
-i\frac{\tau}{\hbar}U(t-N\tau)(e^{i(1-\tau)\theta_N}\Lambda^\tau(N\tau-))\\
+U(t-N\tau)r^\tau_N-i\int_{N\tau}^tU(t-s)F(s)\de s.
\end{multline*}
Let us iterate this formula, repeating the same procedure for $\nabla\psi^\tau(N\tau-)$, then \eqref{eq:131} holds.
\end{proof}
At this point we can use the formula \eqref{eq:131} to obtain Strichartz estimates for $\nabla\psi^\tau$, by applying the Theorem \ref{thm:strich} to each of the terms in \eqref{eq:131}. After some computations, the following result holds.
\begin{proposition}[Strichartz estimates for $\nabla\psi^\tau$]\label{prop:26}
For all $T>0$, let $\psi^\tau$ be the approximating wave function, then one has
\begin{equation}
\lplq{\nabla\psi^\tau}{q}{r}{[0,T]}\leq C(E_0^{\mez}, \|\rho_0\|_{L^1(\R^2)}, T)
\end{equation}
for each admissible pair of exponents $(q,r)$.
\end{proposition}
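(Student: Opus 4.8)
The plan is to take the representation formula \eqref{eq:131} of Lemma \ref{lemma:28} as the starting point and estimate each of its four contributions separately in the Strichartz norm $\lplq{\cdot}{q}{r}{[0,T]}$ by means of Theorem \ref{thm:strich}. Before doing so I would record the three uniform (in $\tau$) bounds on which everything rests. First, mass conservation: the update \eqref{eq:87} leaves $\rho^\tau$ unchanged and the Schr\"odinger--Poisson flow preserves $L^2$ on each slab, so $\lplqs{\psi^\tau}{\infty}{2}=\|\psi_0\|_{L^2}=\|\rho_0\|_{L^1}^{1/2}$. Second, the discrete energy inequality \eqref{eq:en-diss}, which for $0<\tau<1$ gives $E^\tau(t)\le(1+\tau)E_0\lesssim E_0$ and hence both $\lplqs{\nabla\psi^\tau}{\infty}{2}\lesssim E_0^{1/2}$ and $\|\Lambda^\tau(k\tau-)\|_{L^2}\lesssim E_0^{1/2}$ for every $k$. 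Third, Lemma \ref{lemma:elec_pot}, which furnishes uniform control of $V^\tau$ and $\nabla V^\tau$ in terms of $\|\rho_0\|_{L^1}$, $\|V(0)\|_{L^r}$, $E_0$ and $T$ only. Note also that $\|\psi_0\|_{H^1}\lesssim(\|\rho_0\|_{L^1}+E_0)^{1/2}$, so that every constant below will ultimately depend only on $E_0$, $\|\rho_0\|_{L^1}$ and $T$.

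For the homogeneous pieces I would proceed as follows. The first term $U(t)\nabla\psi_0$ is handled by the homogeneous estimate of Theorem \ref{thm:strich}, giving $\lesssim\|\nabla\psi_0\|_{L^2}\lesssim E_0^{1/2}$. For the two finite sums I would apply the same homogeneous estimate to each summand and then sum by the triangle inequality in $\lplq{\cdot}{q}{r}{[0,T]}$, extending each $U(\cdot-k\tau)$ term by zero for $t<k\tau$ (which does not increase the norm, as the sum in \eqref{eq:131} runs only up to the index with $k\tau\le t$). For the second term this yields $\frac{\tau}{\hbar}\sum_{k}\|\phi^\tau_k\Lambda^\tau(k\tau-)\|_{L^2}\le\frac{\tau}{\hbar}\sum_{k}\|\Lambda^\tau(k\tau-)\|_{L^2}$, using $\|\phi^\tau_k\|_{L^\infty}\le1$; the uniform bound on $\Lambda^\tau(k\tau-)$ together with $N\tau\le T$ then gives at most $\frac{T}{\hbar}E_0^{1/2}$. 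The remainder term $\sum_k U(t-k\tau)r^\tau_k$ is treated identically: with $\|r^\tau_k\|_{L^2}\le\tau\|\psi_0\|_{H^1}$ one gets a bound $\le N\tau\|\psi_0\|_{H^1}\le T\|\psi_0\|_{H^1}$.

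For the Duhamel term $-i\int_0^t U(t-s)F(s)\de s$, with $F=\nabla(|\psi^\tau|^{p-1}\psi^\tau+V^\tau\psi^\tau)$, I would invoke the inhomogeneous (retarded) estimate of Theorem \ref{thm:strich} and bound $\|F\|$ in the dual Strichartz norms exactly as in the proof of Theorem \ref{thm:2D_NLS}. Writing $F=|\psi^\tau|^{p-1}\nabla\psi^\tau+V^\tau\nabla\psi^\tau+\nabla V^\tau\,\psi^\tau$ (up to fixed constants), each gradient factor enters only through $\lplqs{\nabla\psi^\tau}{\infty}{2}$, so after H\"older and Gagliardo--Nirenberg \eqref{eq:GNS} the right-hand side is a positive power of $T$ times a function of $\|\rho_0\|_{L^1}$, $E_0$ and the potential norms of Lemma \ref{lemma:elec_pot}. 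The key point here is that $\nabla\psi^\tau$ never reappears in a Strichartz norm on the right: one only uses the \emph{energy} norm $L^\infty_tL^2_x$, so no fixed-point/continuity bootstrap is needed, unlike in the construction of the solution itself. Collecting the four contributions gives the asserted bound $C(E_0^{1/2},\|\rho_0\|_{L^1},T)$ for every admissible $(q,r)$.

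The genuinely delicate point, and the place where I would concentrate the care, is the uniformity in $\tau$ of the two finite sums: a priori they contain $N\sim T/\tau$ summands, so the estimate could degenerate as $\tau\to0$. What rescues it is that each summand carries a factor $\tau$ and that the updates enter $\nabla\psi^\tau$ \emph{additively} rather than multiplicatively --- precisely the reason the smooth-phase update of Lemma \ref{lemma:updat} was adopted in place of the exact factorization \eqref{eq:88}, as discussed in Remark \ref{rmk:25}. This additive structure is what permits \eqref{eq:131} to be written with a single packed Duhamel integral and with clean, $L^\infty$-bounded coefficients $\phi^\tau_k$, so that each sum becomes a Riemann-sum-type quantity controlled by $N\tau\le T$. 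Checking that the triangle inequality over these $\sim T/\tau$ terms loses no uniformity, and that the energy and potential bounds feeding into $F$ are themselves $\tau$-independent, is the heart of the argument.
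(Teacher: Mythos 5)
Your proposal is correct, and for three of the four contributions in \eqref{eq:131} it coincides with the paper's own proof: the free term, the sum over the $\phi^\tau_k\Lambda^\tau(k\tau-)$ updates and the sum over the remainders $r^\tau_k$ are handled exactly as in the paper (homogeneous Strichartz estimate on each summand, triangle inequality, one factor $\tau$ per term against $N\tau\le T$ summands). Where you genuinely depart from the paper is the Duhamel term. The paper estimates $F$ so that the Strichartz norm of $\nabla\psi^\tau$ reappears on the right-hand side, arriving at the self-referential inequality $X\lesssim(1+T)E_0^{1/2}+T_1^\alpha X^p+T_1^{1/2}\|\psi_0\|_{L^2}^2X$ with $X=\|\nabla\psi^\tau\|_{\dot S^0([0,T_1]\times\R^2)}$; it must then invoke a convexity/continuity bootstrap (Lemma \ref{lemma:38}) on a small interval $[0,T_1]$ and iterate over $\sim T/T_1$ intervals as in \eqref{eq:213}. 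You instead close the Duhamel estimate with energy-controlled norms only, in the spirit of the proof of Theorem \ref{thm:2D_NLS}: H\"older in time plus Gagliardo--Nirenberg reduce everything to $\lplqs{\psi^\tau}{\infty}{2}$, $\lplqs{\nabla\psi^\tau}{\infty}{2}$ and the potential bounds of Lemma \ref{lemma:elec_pot}, so no bootstrap and no two-scale time decomposition are needed. This is legitimate, but one step deserves to be made explicit: the discrete energy inequality \eqref{eq:en-diss} controls the hydrodynamic quantities $\nabla\sqrt{\rho^\tau},\Lambda^\tau$ appearing in \eqref{eq:en}, not $\nabla\psi^\tau$ directly, so to get $\lplqs{\nabla\psi^\tau}{\infty}{2}\lesssim E_0^{1/2}$ you need the polar-factorization identity \eqref{eq:null-form}, i.e. $\hbar^2|\nabla\psi^\tau|^2=\hbar^2|\nabla\sqrt{\rho^\tau}|^2+|\Lambda^\tau|^2$ a.e., together with the nonnegativity of the remaining terms of the energy. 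Granted that, your route buys a shorter argument (Lemma \ref{lemma:38} and the iteration become unnecessary), at the price of leaning entirely on the discrete energy inequality, whereas the paper's bootstrap follows the standard local-theory template and uses the energy bound only through $\|\Lambda^\tau(k\tau-)\|_{L^2}\lesssim E_0^{1/2}$. Finally, both you and the paper suppress the dependence of the constant on $\|V(0)\|_{L^r}$ entering through $\lplqs{V^\tau}{\infty}{r}$; since you flag it explicitly, this is not a gap relative to the paper.
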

\begin{proof}
First of all, let us prove the Proposition \ref{prop:26}, for a small time $0<T_1\leq T$ and let 
$(q, r)$ be an admissible pair of exponents. We will choose $T_1>0$ later. Let $N$ be a positive integer such that $T_1\leq N\tau$. By applying the Theorem \ref{thm:strich} to the formula \eqref{eq:131}, we get
\begin{align*}
\LpLq{\nabla\psi^\tau}{q}{r}{[0,T_1]}\leq&\LpLq{U(t)\nabla\psi_0}{q}{r}{[0,T_1]}\\
&+\frac{\tau}{\hbar}\sum_{k=1}^N
\LpLq{U(t-k\tau)\pare{e^{i(1-\tau)\theta_k}\Lambda^\tau(k\tau-)}}{q}{r}{[0,T_1]}\\
&+\sum_{k=1}^N\LpLq{U(t-k\tau)r^\tau_k}{q}{r}{[0,T_1]}\\
&+\LpLq{\int_0^tU(t-s)F(s)\de s}{q}{r}{[0,T_1]}\\
=:A+B+C+D.
\end{align*}
Now we estimate term by term the above expression.
\newline
The estimate of $A$ is straightforward, since
\begin{equation*}
\|U(t)\nabla\psi_0\|_{L^q_tL^r_x([0,T_1]\times\R^2)}\lesssim\|\nabla\psi_0\|_{L^2(\R^2)}.
\end{equation*}
The estimate of $B$ follows from
\begin{multline}
\frac{\tau}{\hbar}\sum_{k=1}^N
\LpLq{U(t-k\tau)\pare{e^{i(1-\tau)\theta_k}\Lambda^\tau(k\tau-)}}{q}{r}{[0,T_1]}\\
\lesssim\tau\sum_{k=1}^N\|\Lambda^\tau(k\tau-)\|_{L^2(\R^2)}\lesssim T_1E_0^{\mez}.
\end{multline}
The term $C$ can be estimated in a similar way, namely
\begin{equation}
\sum_{k=1}^N\|U(t-k\tau)r^\tau_k\|_{L^q_tL^r_x}\lesssim\sum_{k=1}^N\|r^\tau_k\|_{L^2(\R^2)}
\lesssim T_1\|\psi_0\|_{H^1(\R^2)}.
\end{equation}
The last term is a little bit more tricky to estimate. First of all we decompose $F$ into three terms,
$F=F_1+F_2+F_3$, where $F_1=\nabla(|\psi^\tau|^{p-1}\psi^\tau)$, $F_2=\nabla V^\tau\psi^\tau$ and
$F_3=V^\tau\nabla\psi^\tau$.
By the Strichartz estimates (Theorem \ref{thm:strich}), we have
\begin{multline}
\LpLq{\int_0^tU(t-s)F(s)\de s}{q}{r}{[0,T_1]}\\
\lesssim\LpLq{F_1}{q_1'}{r_1'}{[0,T_1]}+\LpLq{F_2}{q_2'}{r_2'}{[0,T_1]}+\LpLq{F_3}{q_3'}{r_3'}{[0,T_1]},
\end{multline}
where $(q_i, r_i)$ are pairs of admissible exponents.
Now, we summarize the previous estimates by using \eqref{eq:131} in the following way
\begin{multline}
\|\nabla\psi^\tau\|_{\dot S^0([0,T_1]\times\R^2)}\lesssim\|\nabla\psi_0\|_{L^2(\R^2)}+T_1E_0^{\mez}
+T_1^\alpha\|\nabla\psi^\tau\|_{\dot S^0([0,T_1]\times\R^2)}^p\\
+T_1^{\mez}\|\psi_0\|_{L^2(\R^2)}^2\|\nabla\psi^\tau\|_{\dot S^0([0,T_1]\times\R^2)}\\
\lesssim(1+T)E_0^{\mez}
+T_1^\alpha\|\nabla\psi^\tau\|_{\dot S^0([0,T_1]\times\R^2)}^p
+T_1^{\mez}\|\psi_0\|_{L^2(\R^2)}^2\|\nabla\psi^\tau\|_{\dot S^0([0,T_1]\times\R^2)}.
\end{multline}
\begin{lemma}\label{lemma:38}
There exist $T_1(E_0, \|\psi_0\|_{L^2(\R^2)}, T)>0$ and $C_1(E_0, \|\psi_0\|_{L^2(\R^2)}, T)>0$, independent on $\tau$, such that
\begin{equation}\label{eq:212}
\|\nabla\psi^\tau\|_{\dot S^0([0, \tilde T]\times\R^2)}\leq C_1(E_0, \|\psi_0\|_{L^2(\R^2)}, T)
\end{equation}
for all $0<\tilde T\leq T_1(E_0, \|\psi_0\|_{L^2(\R^2)})$.
\end{lemma}
Let us recall that $E^\tau(t)=E_0$ and $\|\psi^\tau(t)\|_{L^2(\R^2)}=\|\psi_0\|_{L^2(\R^2)}$, hence we are in the situation in which we can repeat our argument on every time interval of length $T_1$, always depending on the same parameters $E_0, \|\psi_0\|_{L^2}$. Hence this yields to the following inequality on $[0, T]$
\begin{multline}\label{eq:213}
\|\nabla\psi^\tau\|_{\dot S^0([0, T]\times\R^2)}\\\leq C_1(E_0, \|\psi_0\|_{L^2}, T)
\pare{\quadre{\frac{T}{T_1}}+1}=C(\|\psi_0\|_{L^2}, E_0, T).
\end{multline}
\end{proof}
\emph{Proof of the Lemma \ref{lemma:38}}. Let us consider the non-trivial case $\|\psi_0\|_{L^2}>0$. Assume that $X\in(0, \infty)$ satisfies
\begin{equation}
X\leq A+\mu X+\lambda X^p=\phi(X),
\end{equation}
with $p>1$, $A>0$ and for all $0<\mu<1$, $\lambda>0$. Let $X_*$ be such that $\phi'(X_*)=1$, namely 
$X_*=\pare{\frac{1-\mu}{p\lambda}}^{\1{p-1}}$, hence one has $\phi(X_*)<X_*$ each time the following inequality is satisfied
\begin{equation}\label{eq:231}
\pare{\1{p^{\1{p-1}}}-\1{p^{\frac{p}{p-1}}}}\frac{(1-\mu)^{\frac{p}{p-1}}}{\lambda^{\1{p-1}}}>A.
\end{equation}
Therefore the convexity of $\phi$ implies that, if the condition \eqref{eq:231} holds, there exist two roots $X_{\pm}$, $X_+(\mu, \lambda, A)>X_*>X_-(\mu, \lambda, A)$, to the equation $\phi(X)=X$. It then follows either $0\leq X\leq X_-$, or $X\geq X_+$. In our case $\mu=T_1^{1/2}\|\psi_0\|_{L^2}^2$, 
$\lambda=T_1^\alpha$, $A=(1+T)E_0^{1/2}$, hence we assume
\begin{gather*}
\mu=T_1^{1/2}\|\psi_0\|_{L^2}^2<\mez\\
\lambda=T_1^\alpha=T_1^{\frac{5-p}{4}}
<\quadre{\frac{p^{-\1{p-1}}-p^{-\frac{p}{p-1}}}{2^{\frac{p}{p-1}}(1+T)E_0^{1/2}}}^{p-1}.
\end{gather*}
Therefore we choose
\begin{equation}
T_1:=\min\quadre{(2\|\psi_0\|_{L^2})^{-2}, 
\quadre{\frac{p^{-\1{p-1}}-p^{-\frac{p}{p-1}}}{2^{\frac{p}{p-1}}(1+T)E_0^{1/2}}}^{\frac{4(p-1)}{5-p}}}.
\end{equation}
Clearly we cannot have
\begin{equation*}
x_*=\quadre{\frac{1-T_1\|\psi_0\|_{L^2}}{pT_1^\alpha}}^{\1{p-1}}\leq x_+\leq
\|\nabla\psi^\tau\|_{\dot S^0([0, T_1]\times\R^2)},
\end{equation*}
since we get a contradiction as $T_1\to0$, hence
\begin{equation}
\|\nabla\psi^\tau\|_{\dot S^0([0, T_1]\times\R^2)}\leq X_-.
\end{equation}
\hfill $\square$\vspace{1cm}\newline
\begin{corollary}
For all $T>0$, let $\sqrt{\rho^\tau}, \Lambda^\tau$ be the approximating sequence defined by the fractional step method, then 
\begin{equation}
\lplq{\nabla\sqrt{\rho^\tau}}{q}{r}{[0, T]}+\lplq{\Lambda^\tau}{q}{r}{[0, T]}
\leq C(E_0^{\mez}, \|\rho_0\|_{L^1(\R^2)}, T),
\end{equation}
for each admissible pair of exponents $(q, r)$.
\end{corollary}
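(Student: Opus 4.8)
The plan is to derive this corollary directly from the Strichartz bound for $\nabla\psi^\tau$ established in Proposition \ref{prop:26}, using the polar decomposition to pass from the wave function to the hydrodynamic quantities. The point is that $\nabla\sqrt{\rho^\tau}$ and $\Lambda^\tau$ are, for almost every time, pointwise dominated by $|\nabla\psi^\tau|$, so that no new harmonic analysis is required and one only has to feed Proposition \ref{prop:26} into an elementary comparison.

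First I would observe that for almost every $t\in[0,T]$ the approximate wave function $\psi^\tau(t)$ lies in $H^1(\R^2)$: on each slab $[k\tau,(k+1)\tau)$ it solves the Schr\"odinger--Poisson system with $H^1$ data, and the updating step of Section \ref{sect:fract} preserves $H^1$, the only excluded instants being the countably many update times. Hence Lemma \ref{lemma:phase}(i) applies at such $t$ and yields a polar factor $\phi^\tau(t)$ with $\|\phi^\tau(t)\|_{L^\infty(\R^2)}\le1$ satisfying
\begin{equation*}
\nabla\sqrt{\rho^\tau}=\re(\conj{\phi^\tau}\nabla\psi^\tau),\qquad
\Lambda^\tau=\hbar\im(\conj{\phi^\tau}\nabla\psi^\tau).
\end{equation*}
Since $|\phi^\tau|\le1$ pointwise, bounding the real and imaginary parts by the modulus gives $|\nabla\sqrt{\rho^\tau}(t,x)|\le|\nabla\psi^\tau(t,x)|$ and $|\Lambda^\tau(t,x)|\le\hbar|\nabla\psi^\tau(t,x)|$ for a.e. $(t,x)$.

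Next I would integrate these pointwise inequalities in the mixed norm, first in the spatial $L^r_x$ norm at fixed $t$ and then in the temporal $L^q_t$ norm, using only the monotonicity of these norms under pointwise domination, to obtain
\begin{equation*}
\lplq{\nabla\sqrt{\rho^\tau}}{q}{r}{[0,T]}
+\lplq{\Lambda^\tau}{q}{r}{[0,T]}
\le(1+\hbar)\lplq{\nabla\psi^\tau}{q}{r}{[0,T]}
\end{equation*}
for every admissible pair $(q,r)$. Inserting the bound of Proposition \ref{prop:26} then produces the asserted estimate with a constant of the form $C(E_0^{\mez},\|\rho_0\|_{L^1(\R^2)},T)$.

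The argument is essentially routine once Proposition \ref{prop:26} is available, and I expect no serious difficulty. The only point demanding a little care --- the nearest thing to an obstacle --- is the justification that the polar-decomposition identities of Lemma \ref{lemma:phase} may be applied at a.e. fixed time to the approximate solution (rather than to a single, fixed $H^1$ function), together with the remark that the jumps of $\Lambda^\tau$ at the update instants form a measure-zero subset of $[0,T]$ and hence leave the space-time norms unchanged.
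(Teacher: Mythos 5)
Your proposal is correct and coincides with the paper's intended derivation: the corollary is stated there without proof precisely because it follows immediately from Proposition \ref{prop:26} via the polar-factorization identities $\nabla\sqrt{\rho^\tau}=\re(\conj{\phi^\tau}\nabla\psi^\tau)$, $\Lambda^\tau=\hbar\im(\conj{\phi^\tau}\nabla\psi^\tau)$ of Lemma \ref{lemma:phase} and the pointwise bound $|\phi^\tau|\le1$. Your extra care about applying the lemma at a.e.\ fixed time and about the countably many update instants is sound and, if anything, slightly more explicit than the paper.
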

Unfortunately this is not enough to achieve the convergence of the quadratic terms. We need some additional compactness estimates on the sequence $\{\nabla\psi^\tau\}$ in order to apply Theorem 
\ref{thm:comp}. In particular we need some tightness and regularity properties on the sequence 
$\{\nabla\psi^\tau\}$, therefore we apply some results concerning local smoothing due to Vega 
\cite{V} and Constantin, Saut \cite{CS}.
 \begin{proposition}[Local smoothing for $\nabla\psi^\tau$]\label{prop:28}
For all $T>0$, let $\psi^\tau$ be defined as in the previous section, then it follows
\begin{equation}
\|\nabla\psi^\tau\|_{L^2([0,T];H^{1/2}_{loc}(\R^2))}\leq C(E_0, T, \|\rho_0\|_{L^1}).
\end{equation}
\end{proposition}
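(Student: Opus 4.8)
The plan is to apply the local smoothing estimates from Theorems \ref{thm:smooth1} and \ref{thm:smooth2} directly to the representation formula \eqref{eq:131} for $\nabla\psi^\tau$. Since $\psi^\tau$ is not a genuine solution of a single Schr\"odinger equation over the whole time interval (because of the updating steps at each $k\tau$), I cannot simply quote Theorem \ref{thm:smooth2} for $\nabla\psi^\tau$ as a whole. Instead I would decompose $\nabla\psi^\tau(t)$ into the four pieces appearing in \eqref{eq:131}: the free evolution $U(t)\nabla\psi_0$ of the initial datum, the sum $-i\frac{\tau}{\hbar}\sum_{k=1}^N U(t-k\tau)(\phi^\tau_k\Lambda^\tau(k\tau-))$ of free evolutions of the collisional corrections, the Duhamel term $-i\int_0^t U(t-s)F(s)\de s$, and the sum $\sum_{k=1}^N U(t-k\tau)r^\tau_k$ of free evolutions of the error terms. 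Because the local smoothing norm on the left of the desired estimate is subadditive, it suffices to bound each of these four contributions in $L^2([0,T];H^{1/2}_{loc}(\R^2))$ separately.

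For the first term I would invoke the homogeneous local smoothing estimate \eqref{eq:smooth1}, giving $\|U(\cdot)\nabla\psi_0\|_{L^2([0,T];H^{1/2}_{loc})}\lesssim\|\nabla\psi_0\|_{L^2}\lesssim E_0^{\mez}$. For the two discrete sums, each summand $U(t-k\tau)g_k$ is a free Schr\"odinger evolution (started at time $k\tau$) of an $L^2$ datum, so Theorem \ref{thm:smooth1} applies to each and I would sum the resulting bounds. For the collisional corrections this yields a factor $\tau\sum_{k=1}^N\|\Lambda^\tau(k\tau-)\|_{L^2}\lesssim T E_0^{\mez}$, exactly as in the estimate of the $B$ term in the proof of Proposition \ref{prop:26}, using that $N\tau\le T+\tau$ and each $\|\Lambda^\tau(k\tau-)\|_{L^2}\lesssim E_0^{\mez}$ by energy conservation. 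For the error terms I would similarly get $\sum_{k=1}^N\|r^\tau_k\|_{L^2}\lesssim T\|\psi_0\|_{H^1}$, again uniformly in $\tau$. For the Duhamel term I would apply the inhomogeneous local smoothing estimate \eqref{eq:smooth2}, which bounds it by $\|F\|_{L^1([0,T];L^2(\R^2))}$, where $F=\nabla(|\psi^\tau|^{p-1}\psi^\tau+V^\tau\psi^\tau)$.

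The main obstacle I expect is controlling $\|F\|_{L^1_tL^2_x}$ uniformly in $\tau$. This requires estimating $\nabla(|\psi^\tau|^{p-1}\psi^\tau)$ and $\nabla(V^\tau\psi^\tau)$ in $L^1([0,T];L^2(\R^2))$, which I would handle by the same interpolation scheme already deployed in the proof of Theorem \ref{thm:2D_NLS}: expand the derivatives by the product rule, apply H\"older in space, and close the estimates using the Gagliardo-Nirenberg inequality \eqref{eq:GNS} together with the already-established bounds $\nabla V^\tau\in L^\infty_tL^p_x$ and $V^\tau\in L^\infty_tL^r_x$ from Lemma \ref{lemma:elec_pot}. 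The crucial point is that all these bounds are uniform in $\tau$, since they depend only on $E_0$, $\|\rho_0\|_{L^1}$, and $T$ through the conserved energy and mass; in particular the uniform Strichartz bound $\|\nabla\psi^\tau\|_{\dot S^0([0,T]\times\R^2)}\le C(\|\psi_0\|_{L^2},E_0,T)$ from Proposition \ref{prop:26} provides the $L^q_tL^r_x$ control needed to make the time integration in the $L^1_t$ norm finite. Collecting the four bounds gives the claimed inequality with a constant depending only on $E_0$, $T$, and $\|\rho_0\|_{L^1}$.
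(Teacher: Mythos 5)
Your proposal is correct and follows essentially the same route as the paper: both decompose $\nabla\psi^\tau$ via the Duhamel-type formula \eqref{eq:131}, apply the homogeneous local smoothing estimate to the free evolution of $\nabla\psi_0$ and to each summand of the two discrete sums (bounding them by $\tau\sum_k\|\Lambda^\tau(k\tau-)\|_{L^2}\lesssim TE_0^{1/2}$ and $\sum_k\|r^\tau_k\|_{L^2}\lesssim T\|\psi_0\|_{H^1}$ via energy conservation), and apply the inhomogeneous estimate to the Duhamel term. Your handling of $\|F\|_{L^1([0,T];L^2)}$ --- H\"older plus Gagliardo--Nirenberg, the potential bounds of Lemma \ref{lemma:elec_pot}, and the uniform Strichartz bound of Proposition \ref{prop:26} --- is exactly what the paper does after its proof block.
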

\begin{proof}
Using the Strichartz estimates obtained above, we can apply the Theorems \ref{thm:smooth1}, 
\ref{thm:smooth2} about local smoothing. Indeed, by using again the formula \eqref{eq:131} it follows
\begin{align*}
\|\nabla\psi^\tau\|_{L^2([0,T];H^{1/2}_{loc}(\R^2))}\lesssim
&\|\nabla\psi_0\|_{L^2(\R^2)}\\
&+\frac{\tau}{\hbar}\sum_{k=1}^N\|\Lambda^\tau(k\tau-)\|_{L^2(\R^2)}\\
&+\tau\sum_{k=1}^N\|\nabla\psi_{n_k}\|_{L^2(\R^2)}\\
&+\sum_{k=1}^N\norma{\nabla\psi_{n_k}-\nabla\psi^\tau(k\tau-)
+\frac{\tau}{\hbar}(\Lambda_{n_k}-\Lambda^\tau(k\tau-))}_{L^2(\R^2)}\\
&+\|F\|_{L^1([0,T];L^2(\R^2))}.
\end{align*}
\end{proof}Furthermore, since again we have the formula \eqref{eq:131}; for $t\in[N\tau, (N+1)\tau)$,
\begin{multline}
\nabla\psi^\tau(t)=U(t)\nabla\psi_0-i\int_0^tU(t-s)F(s)\de s\\
-i\frac{\tau}{\hbar}\sum_{k=1}^NU(t-k\tau)\quadre{\phi_k^\tau\Lambda^\tau(k\tau-)}
+\sum_{k=1}^NU(t-k\tau)r_k^\tau,
\end{multline}
holds (see Chapter \ref{sect:apr}), we can exploit the Strichartz estimates to obtain a bound like the one in \eqref{eq:2D_Strich}, 
\begin{equation}\label{eq:820}
\lplq{\nabla\psi^\tau}{q}{r}{[0, T]}\leq C(T, \|\psi_0\|_{L^2}, E_0),
\end{equation}
for every admissible pair $(q, r)$ of Schr\"odinger exponents in $\R^2$.
Hence, in order to obtain the sufficient compactness properties on the sequence of approximate solutions $\{\nabla\psi^\tau\}$, it only remains to recover the smoothing estimates for $\nabla\psi^\tau$. But this is straightforward since by theorems \ref{thm:smooth1}, \ref{thm:smooth2} we have
\begin{multline}
\|\nabla\psi^\tau\|_{L^2([0, T]; H^{1/2}_{loc}(\R^2))}\lesssim\|\nabla\psi_0\|_{L^2}
+\lplqs{|\psi^\tau|^{p-1}\nabla\psi^\tau}{1}{2}+\lplqs{V\nabla\psi}{1}{2}+\lplqs{\nabla V\psi}{1}{2}\\
\lesssim\|\nabla\psi_0\|_{L^2}+T^{\1{q_1'}}\lplqs{\psi}{\infty}{r_2(p-1)}^\alpha\lplqs{\nabla\psi}{q_1}{r_1}\\
+T^{\1{r'}}\lplqs{V}{\infty}{r}\lplqs{\nabla\psi}{r}{\frac{2r}{r-2}}
+T\lplqs{\nabla V}{\infty}{p}\lplqs{\psi}{\infty}{\frac{2p}{p-2}}\\
\lesssim\|\nabla\psi_0\|_{L^2}
+T^{\1{q_1'}}\lplqs{\psi}{\infty}{2}^{\frac{2}{r_2}}\lplqs{\nabla\psi}{\infty}{2}^{\frac{r_2(p-1)-2}{r_2}}\\
+T^{\1{r'}}\lplqs{V}{\infty}{r}\lplqs{\nabla\psi}{r}{\frac{2r}{r-2}}
+T\lplqs{\nabla V}{\infty}{p}\lplqs{\psi}{\infty}{2}^{1-\frac{2}{p}}\lplqs{\nabla\psi}{\infty}{2}^{\frac{2}{p}},
\end{multline}
where $(q_1, r_1)$ is a Sch\"odinger admissible pair in $\R^2$, $r_2$ is such that 
$2\leq r_2(p-1)<\infty$, $r$ is such that $V(0)\in L^r$ and $2<p<\infty$. Note that in the second inequality we used H\"older's inequality, while in the third one the Gagliardo-Nirenberg-Sobolev inequality.
Hence, by the estimate \eqref{eq:820} we can say
\begin{equation}
\|\nabla\psi^\tau\|_{L^2([0, T]; H^{1/2}_{loc}(\R^2))}\leq C(T, \|\psi_0\|_{L^2}, E_0).
\end{equation}
\newline
Hence, we can apply the theorem \ref{thm:comp} by Rakotoson, Temam; this means there exists 
$\psi\in L^\infty([0, T]; H^1(\R^2))\cap L^2([0, T]; H^{3/2}_{loc}(\R^2))$ such that
\begin{equation}
\psi^\tau\to\psi\qquad\textrm{in}\;L^\infty([0, T]; H^1(\R^2))
\end{equation}
strongly. Finally, by applying the consistency theorem \ref{thm:cons} we can state that $(\rho, J)$ defined by $(\rho, J):=(|\psi|^2, \hbar\im(\conj{\psi}\nabla\psi))$ is a finite energy weak solution to 
\eqref{eq:QHD}, \eqref{eq:QHD_IV}.
\appendix
\section{Polar Decomposition}
In this Section we will get into details of the main properties of polar factorization of a wave function, in particular we will prove Lemma \ref{lemma:phase}. 
\newline\newline
\emph{Proof of Lemma \ref{lemma:phase}.}
Let us consider a sequence $\{\psi_n\}\subset\cfun^\infty(\R^2)$, $\psi_n\to\psi$ in 
$H^1(\R^2)$, define the unitary factor
\begin{equation}\label{defn:reg-phas}
\phi_n(x)=\left\{\begin{array}{rl}
\frac{\psi_n(x)}{|\psi_n(x)|}&\textrm{if}\;\psi_n(x)\neq0\\
0&\textrm{if}\;\psi(x)=0.
\end{array}\right.
\end{equation}
Then, there exists $\phi\in L^\infty(\R^2)$ such that $\phi_n\wstar\phi$ in $L^\infty(\R^2)$ and 
$\nabla\psi_n\to\nabla\psi$ in $L^2(\R^2)$, hence
\begin{equation*}
\re\pare{\conj{\phi_n}\nabla\psi_n}\wlim\re\pare{\conj{\phi}\nabla\psi}\qquad\textrm{in}\;L^2(\R^2).
\end{equation*}
Since by \eqref{defn:reg-phas}, one has
\begin{equation*}
\re\pare{\conj{\phi_n}\nabla\psi_n}=\nabla|\psi_n|
\end{equation*}
a.e. in $\R^2$, it follows
\begin{equation*}
\nabla\sqrt{\rho_n}\wlim\re\pare{\conj{\phi}\nabla\psi}\qquad\textrm{in}\;L^2(\R^2).
\end{equation*}
Moreover, one has $\nabla\sqrt{\rho_n}\wlim\nabla\sqrt{\rho}$ in $L^2(\R^2)$, therefore
\begin{equation*}
\nabla\sqrt{\rho}=\re\pare{\conj{\phi}\nabla\psi},
\end{equation*}
where $\phi$ is a unitary factor of $\psi$.
\newline
The identity \eqref{eq:null-form} follows immediately from
\begin{multline}\label{eq:a_2}
\hbar^2\re(\d_j\conj{\psi}\d_k\psi)=\hbar^2\re((\phi\d_j\conj{\psi})(\conj{\phi}\d_k\psi))
=\hbar^2\re(\phi\d_j\conj{\phi})\re(\conj{\phi}\d_k\psi)\\
-\hbar^2\im(\phi\d_j\conj{\psi})\im(\conj{\phi}\d_k\psi)
=\hbar^2\d_j\sqrt{\rho}\d_k\sqrt{\rho}+\Lambda^{(j)}\Lambda^{(k)}.
\end{multline}
This identity is valid for any $\psi\in H^1(\R^3)$ since $\nabla\psi=0$ a.e. in the nodal region 
$\{\psi=0\}$.
\newline
Now we prove \eqref{eq:strong_conv}. Let us take a sequence $\psi_n\to\psi$ strongly in $H^1(\R^2)$, and consider $\nabla\sqrt{\rho_n}=\re\pare{\conj{\phi_n}\nabla\psi_n}$, 
$\Lambda_n:=\hbar\im\pare{\conj{\phi_n}\nabla\psi_n}$. 
As before, $\phi_n\wstar\phi$ in $L^\infty(\R^2)$, with $\phi$ a polar factor for $\psi$; then $\nabla\sqrt{\rho_n}\wlim\nabla\sqrt{\rho}$, 
$\re\pare{\conj{\phi_n}\nabla\psi_n}\wlim\re\pare{\conj{\phi}\nabla\psi}$, and 
$\nabla\sqrt{\rho}=\re\pare{\conj{\phi}\nabla\psi}$. Moreover, 
$\Lambda_n:=\hbar\im\pare{\conj{\phi_n}\nabla\psi_n}\wlim\hbar\im\pare{\conj{\phi}\nabla\psi}
=:\Lambda$.
To upgrade the weak convergence into the strong one, simply notice that by \eqref{eq:null-form} one has
\begin{multline*}
\hbar^2\|\nabla\psi\|_{L^2}^2=\hbar^2\|\nabla\sqrt{\rho}\|_{L^2}^2+\|\Lambda\|_{L^2}^2
\leq\liminf_{n\to\infty}\pare{\hbar^2\|\nabla\sqrt{\rho_n}\|_{L^2}^2+\|\Lambda_n\|_{L^2}^2}\\
=\liminf_{n\to\infty}\hbar^2\|\nabla\psi_n\|_{L^2}^2=\hbar^2\|\nabla\psi\|_{L^2}^2.
\end{multline*}
Finally to prove identity \eqref{eq:119} it suffices to notice that we can write
\begin{equation*}
\hbar\nabla\conj{\psi}\wedge\nabla\psi=\hbar(\phi\nabla\conj{\psi})\wedge(\conj{\phi}\nabla\psi),
\end{equation*}
where $\phi\in L^\infty(\R^2)$ is the polar factor of $\psi$ such that 
$\nabla\sqrt{\rho}=\re(\conj{\phi}\nabla\psi)$, $\Lambda:=\hbar\im(\conj{\phi}\nabla\psi)$, as in the Lemma 
\ref{lemma:phase}. By splitting 
$\conj{\phi}\nabla\psi$ into its real and imaginary part, we get the identity \eqref{eq:119}.
\hfill $\square$\vspace{1cm}\newline
Now we state a technical lemma which will be used in the next sections. It summarizes the results of this section we use later on and will be handy for the application to the fractional step method.
\begin{lemma}\label{lemma:updat}
Let $\psi\in H^1(\R^2)$, and let $\tau, \eps>0$ be two arbitrary (small) real numbers. Then there exists 
$\tilde\psi\in H^1(\R^2)$ such that
\begin{align*}
\tilde\rho&=\rho\\
\tilde\Lambda&=(1-\tau)\Lambda+r_\eps,
\end{align*}
where $\sqrt{\rho}:=|\psi|, \sqrt{\tilde\rho}:=|\tilde\psi|, \Lambda:=\hbar\im(\conj{\phi}\nabla\psi), 
\tilde\Lambda:=\hbar\im(\conj{\tilde\phi}\nabla\tilde\psi)$, $\phi, \tilde\phi$ are polar factors for $\psi, \tilde\psi$, respectively, and
\begin{equation*}
\|r_\eps\|_{L^2(\R^2)}\leq\eps.
\end{equation*}
Furthermore we have
\begin{equation}\label{eq:121}
\nabla\tilde\psi=\nabla\psi-i\frac{\tau}{\hbar}\phi^\star\Lambda+r_{\eps, \tau},
\end{equation}
where $\|\phi^\star\|_{L^\infty(\R^2)}\leq1$ and
\begin{equation}\label{eq:122}
\|r_{\eps, \tau}\|_{L^2(\R^2)}\leq C(\tau\|\nabla\psi\|_{L^2(\R^2)}+\eps).
\end{equation}
\end{lemma}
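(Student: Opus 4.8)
The plan is to build $\tilde\psi$ by rotating the phase of $\psi$ so as to mimic the exact update $\tilde\psi=\phi^{1-\tau}\sqrt{\rho}$ used formally in \eqref{eq:88}. Writing $\psi=\sqrt{\rho}\,\phi$ with the polar factor $\phi$ from Lemma~\ref{lemma:phase}, I set $g:=\phi^{-\tau}$ and $\tilde\psi:=g\,\psi$, so that $|\tilde\psi|=|\psi|$ and $\tilde\rho=\rho$ is immediate. The crucial choice is to define the unimodular multiplier through the \emph{principal} determination of the argument, $g=\exp(-i\tau\operatorname{Arg}\phi)$, which forces $\|g\|_{L^\infty}\le1$ and, more importantly, the pointwise bound $|g-1|\le\tau|\operatorname{Arg}\phi|\le\pi\tau$. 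First I would record the two formal identities valid on $\{\rho>0\}$: from $\nabla\sqrt{\rho}=\re(\conj{\phi}\nabla\psi)$ and $\Lambda=\hbar\,\im(\conj{\phi}\nabla\psi)$ one has $\nabla\psi=\phi(\nabla\sqrt{\rho}+i\Lambda/\hbar)$ and $\sqrt{\rho}\,\nabla\operatorname{Arg}\phi=\Lambda/\hbar$. Differentiating $\tilde\psi=g\psi$ and using $(\nabla g)\psi=-i\tau(\nabla\operatorname{Arg}\phi)g\psi=-i\tfrac{\tau}{\hbar}(g\phi)\Lambda$ then gives
\begin{equation*}
\nabla\tilde\psi=g\nabla\psi+(\nabla g)\psi
=\nabla\psi-i\tfrac{\tau}{\hbar}(g\phi)\Lambda+(g-1)\nabla\psi,
\end{equation*}
which already matches \eqref{eq:121} with $\phi^\star:=g\phi$ (so $\|\phi^\star\|_{L^\infty}\le1$) and remainder $(g-1)\nabla\psi$.

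Next I would verify the two hydrodynamic estimates. For the gradient remainder the pointwise bound $|g-1|\le\pi\tau$ gives at once $\|(g-1)\nabla\psi\|_{L^2}\le\pi\tau\|\nabla\psi\|_{L^2}$, which is the clean part of \eqref{eq:122}; it is precisely the use of the principal argument, hence of a multiplier uniformly $O(\tau)$-close to $1$, that makes this constant absolute — by contrast the full phase $\theta$ is unbounded and only defined modulo $2\pi$, so $(e^{-i\tau\theta}-1)\nabla\psi$ would only be $O(1)$. For the current, the computation underlying Lemma~\ref{lemma:phase} applied to the unimodular factor $g$ yields $\tilde\Lambda=\Lambda+\hbar\sqrt{\rho}\,\im(\conj{g}\nabla g)$; formally $\im(\conj{g}\nabla g)=-\tau\nabla\operatorname{Arg}\phi$ and $\hbar\sqrt{\rho}\,\nabla\operatorname{Arg}\phi=\Lambda$, so $\tilde\Lambda=(1-\tau)\Lambda$ exactly wherever $\operatorname{Arg}\phi$ is smooth.

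The main obstacle is the only place where this formal computation breaks down, namely the branch locus of $\operatorname{Arg}\phi$: there $g$ jumps by a unimodular factor, $\nabla g$ carries a singular (curve) measure, and a priori $\tilde\psi\notin H^1$ while the current identity acquires an uncontrolled defect. I would resolve this in the way the parameter $\eps$ in the statement suggests. Approximate $\psi$ by smooth $\psi_n\to\psi$ in $H^1$; for such $\psi_n$ the zero set is small and one may select a piecewise smooth determination $\theta_n$ of the argument whose jump set is routed through regions of arbitrarily small $\rho_n$-mass, and then mollify across it. Defining $\tilde\psi_n=e^{-i\tau\theta_n}\psi_n$ and passing to the limit via the stability statement (ii) of Lemma~\ref{lemma:phase}, the whole branch/regularization defect is collected into a single current term $r_\eps$, with $\|r_\eps\|_{L^2}\le\eps$ once $n$ and the mollification scale are chosen small enough (quantitatively $\eps\sim\|\psi_n-\psi\|_{H^1}$ plus the smoothing error), and the same defect contributes at most $\eps$ to the gradient remainder.

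Collecting the two contributions yields $\tilde\Lambda=(1-\tau)\Lambda+r_\eps$ together with $\|r_{\eps,\tau}\|_{L^2}\le C(\tau\|\nabla\psi\|_{L^2}+\eps)$, which is \eqref{eq:122}. The delicate point throughout is exactly the control of this branch-cut contribution: one must guarantee that damping the current by the factor $(1-\tau)$ is realized by a genuine $H^1$ multiplier that nonetheless stays $O(\tau)$-close to $1$, and it is the freedom of an $\eps$-error supported on the low-density set that makes these two competing requirements compatible.
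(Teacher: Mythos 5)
Your construction is, at its core, the same as the paper's: the paper also regularizes $\psi$ by smooth $\psi_n\to\psi$ in $H^1$, takes a \emph{bounded} piecewise-smooth determination $\theta_n\in[0,2\pi)$ of the phase, sets $\tilde\psi=e^{i(1-\tau)\theta_n}\sqrt{\rho_n}$, obtains \eqref{eq:121} with $\phi^\star=e^{i(1-\tau)\theta_n}$ and a remainder controlled through $|e^{-i\tau\theta_n}-1|\lesssim\tau$, and absorbs $\nabla\psi_n-\nabla\psi$ and $\Lambda_n-\Lambda$ into $r_{\eps,\tau}$ and $r_\eps$ via the stability part (ii) of Lemma~\ref{lemma:phase}. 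Your identities, your choice $\phi^\star=g\phi$, and your bound $|g-1|\le\pi\tau$ are exactly this mechanism. Where you depart from the paper is the treatment of the branch cut, and there your argument has a genuine gap.

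You claim the jump set of the phase can be ``routed through regions of arbitrarily small $\rho_n$-mass'' and then mollified at cost $\eps$. This is impossible whenever $\psi$ carries an isolated zero of nonzero winding with $\rho\ge c>0$ on a surrounding annulus, say $\{1\le|x|\le2\}$ (e.g.\ $\psi(x)=(x_1+ix_2)e^{-|x|^2}$; every $H^1$-approximation $\psi_n$ inherits such a zero, since on a.e.\ circle $\psi_n\to\psi$ uniformly, hence with equal degree): any single-valued determination of the argument must jump along a curve $\Gamma$ joining that zero to another zero or to infinity, every such $\Gamma$ crosses the annulus, and so $\int_\Gamma\rho_n\,\de s\ge c$ is \emph{not} at your disposal. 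Moreover no mollification scale can repair this, because the estimate you are after is violated by \emph{every} admissible competitor: if $\tilde\psi\in H^1$ with $|\tilde\psi|=|\psi|$, then $h:=\tilde\psi/\psi$ is circle-valued and $H^1$ on the annulus, so for a.e.\ $R$ its restriction to $\{|x|=R\}$ is continuous and $\oint_{|x|=R}\im(\conj{h}\nabla h)\cdot\de l\in2\pi\Z$; since $\tilde\Lambda-\Lambda=\hbar\sqrt{\rho}\,\im(\conj{h}\nabla h)$ and $\oint_{|x|=R}\im(\conj{\phi}\nabla\phi)\cdot\de l=2\pi$, writing $\tilde\Lambda=(1-\tau)\Lambda+r$ forces $\oint_{|x|=R}r/(\hbar\sqrt{\rho})\cdot\de l\in2\pi\tau+2\pi\Z$, hence $\oint_{|x|=R}|r|\,\de l\ge2\pi\hbar\sqrt{c}\,\tau$ for a.e.\ $R\in[1,2]$, and integrating in $R$ gives $\|r\|_{L^2}\ge c'\tau$. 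So in the regime $\eps\ll\tau$ --- precisely the one used in Section~\ref{sect:fract}, where $\eps=\tau2^{-k}\|\psi_0\|_{H^1}$ --- no phase modification whatsoever can achieve $\|r_\eps\|_{L^2}\le\eps$ near a vortex. To be fair, this objection applies verbatim to the paper's own proof, which differentiates $\theta_n$ as if it were smooth: across the curve where $\theta_n$ jumps from $2\pi$ to $0$, the multiplier $e^{i(1-\tau)\theta_n}$ jumps by $e^{-2\pi i\tau}\neq1$, so the $\tilde\psi$ of \eqref{eq:123} is not in $H^1$ and the stated formula for $\nabla\tilde\psi$ discards a singular term supported on the cut. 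Your write-up has the merit of making this step explicit, but the repair you propose does not close it --- and the circulation argument above indicates that no repair confined to modifying the phase can.
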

\begin{proof}
Consider a sequence $\{\psi_n\}\subset\cfun^\infty(\R^2)$ converging to $\psi$ in $H^1(\R^2)$, and define
\begin{equation*}
\phi_n(x):=\left\{\begin{array}{ll}
\frac{\psi_n(x)}{|\psi_n(x)|}&\textrm{if}\;\psi_n(x)\neq0\\
0&\textrm{if}\;\psi_n(x)=0
\end{array}\right.
\end{equation*}
as polar factors for the wave functions $\psi_n$. Since $\psi_n\in\cfun^\infty$, then $\phi_n$ is piecewise smooth, and $\Omega_n:=\{x\in\R^2\;:\;|\psi_n(x)|>0\}$  is an open set, with smooth boundary. Therefore we can say there exists a function $\theta_n:\Omega_n\to[0,2\pi)$, piecewise smooth in 
$\Omega_n$ and
\begin{equation*}
\phi_n(x)=e^{i\theta_n(x)},\qquad\textrm{for any}\;x\in\Omega_n.
\end{equation*}
Moreover, by the previous Lemma, we have $\phi_n\wstar\phi$ in $L^\infty(\R^2)$, where $\phi$ is a polar factor of $\psi$, and 
$\Lambda_n:=\hbar\im(\conj{\phi_n}\nabla\psi_n)\to\Lambda:=\hbar\im(\conj{\phi}\nabla\psi)$ in 
$L^2(\R^2)$. Thus there exists $n\in\N$ such that
\begin{equation*}
\|\psi-\psi_n\|_{H^1(\R^2)}+\|\Lambda-\Lambda_n\|_{L^2(\R^2)}\leq\eps.
\end{equation*}
Now we can define
\begin{equation}\label{eq:123}
\tilde\psi:=e^{i(1-\tau)\theta_n}\sqrt{\rho_n}.
\end{equation}
Let us remark that \eqref{eq:123} is a good definition for $\tilde\psi$, since outside $\Omega_n$, where $\theta_n$ is not defined, we have $\sqrt{\rho_n}=0$, and hence also $\psi_n=0$. Furthermore, 
\begin{align*}
\nabla\tilde\psi
=&e^{-i\tau\theta_n}\nabla\psi_n-i\frac{\tau}{\hbar}e^{i(1-\tau)\theta_n}\Lambda_n\\
=&\nabla\psi-i\frac{\tau}{\hbar}e^{i(1-\tau)\theta_n}\Lambda_n
+\tau\pare{\sum_{j=1}^\infty\frac{(-i\theta_n)^j\tau^{j-1}}{j!}}\nabla\psi_n\\
&+(\nabla\psi_n-\nabla\psi)-i\frac{\tau}{\hbar}e^{i(1-\tau)\theta_n}(\Lambda_n-\Lambda)
\end{align*}
and thus we obtain \eqref{eq:121} and \eqref{eq:122}, with $r_{\eps, \tau}$ given by
\begin{equation*}
r_{\eps, \tau}=\tau\pare{\sum_{j=1}^\infty\frac{(-i\theta_n)^j\tau^{j-1}}{j!}}\nabla\psi_n
+(\nabla\psi_n-\nabla\psi)-i\frac{\tau}{\hbar}e^{i(1-\tau)\theta_n}(\Lambda_n-\Lambda)
\end{equation*} 
Moreover
\begin{equation}
\tilde\Lambda=\hbar\im(e^{-i(1-\tau)\theta_n}\nabla\tilde\psi)
=(1-\tau)\Lambda_n
=(1-\tau)\Lambda+(1-\tau)(\Lambda_n-\Lambda)
\end{equation}
and clearly $r_\eps:=(1-\tau)(\Lambda_n-\Lambda)$ has $L^2$-norm less than $\eps$ by assumption.
\end{proof}
\subsection*{Acknowledgement}
We wish to thank prof. R. Danchin for having drawn our attention to an important technical remark needed to ensure the validity of the identity \eqref{eq:a_2}.


\begin{thebibliography}{100}
 \bibitem{AI} Ancona M. Iafrate G., \emph{Quantum correction to the equation of state
of an electron gas in a semiconductor}, Phys. Rev. B  \textbf{39}, pp. 9536--9540 (1989).
\bibitem{AM} P. Antonelli, P. Marcati, \emph{On the finite energy weak solutions to a system in Quantum Fluid Dynamics}, Comm. Math. Phys. {\bf 287} (2009), no 2, 657--686.
 \bibitem{Aub} Aubin J.-P., \emph{Un Théorème de compacité}, C. R. Acad. Sci. \textbf{256}, 5042-5044 (1963).
 \bibitem{BW} G. Baccarani, M.R. Wordeman, \emph{An investigation of steady state velocity overshoot effects in Si and GaAs devices}, Solid State Electron. {\bf ED-29} (1982), 970--977.
  \bibitem{Br} Brenier Y., \emph{Polar Factorization and Monotone Rearrangement of Vector-Valued Function}, Comm. Pure Appl. Math. \textbf{44}, 375-417 (1991).
   \bibitem{Carl} Carlen E., \emph{Conservative Diffusions}, Comm. Math. Phys. \textbf{94}, 293-315 (1984).
   \bibitem{CL} Carlen E. Loss M., \emph{Competing Symmetries, the Logarithmic HLS Inequality and Onofri's inequality on $\Sph^n$}, Geom. Funct. Anal. \textbf{2}, n.1, 90-104 (1992).
\bibitem{Caz} Cazenave T., \emph{Semilinear Schr\"odinger Equations}, Courant Lecture Notes in Mathematics, \textbf{10}. New York University, Courant Institute of Mathematical Sciences, AMS, 2003. 
\bibitem{CS} Constantin P. Saut J.C., \emph{Local Smoothing Properties of Dispersive Equations}, J. Amer. Math. Soc. \textbf{1} (1988), 413-439.
\bibitem{DGPS} Dalfovo F. Giorgini S. Pitaevskii L. Stringari S, \emph{Theory of Bose-Einstein condensation in trapped gases}, Rev. Mod. Phys., \textbf{71}, 463-512 (1999).
\bibitem{DGM1} Degond P. Gallego S. Méhats F., \emph{Isothermal quantum hydrodynamics: derivation, asymptotic analysis, and simulation}, Multiscale Model. Simul. \textbf{6}, 1, 246-272 (2007).
\bibitem{F} Feynman R.P., \emph{Superfluidity and Superconductivity}, Rev. Mod. Phys., \textbf{29}, n.2, 205 (1957).
\bibitem{G} Gardner C., \emph{The Quantum Hydrodynamic Model for Semiconductor Devices} SIAM J. Appl. Math. \textbf{54}, 409-427 (1994).
\bibitem{GM} Gasser I. Markowich P., \emph{Quantum hydrodynamics, Wigner
transforms and the classical limit},  Asymptot. Anal.  \textbf{14},  no. 2, 97--116 (1997).
\bibitem{GV} Ginibre J. Velo G., \emph{The global Cauchy problem for the nonlinear Schr\"odinger equations revisited}, Ann. Inst. H. Poincaré Anal. Non Linéaire \textbf{2}, 309-327 (1987).
\bibitem{JMR} J\"ungel A. Mariani M.C. Rial D., \emph{Local Existence of Solutions to the Transient Quantum Hydrodynamic Equations},  Math. Models Methods Appl. Sci.  \textbf{12},  no. 4, 485--495 (2002).
\bibitem{JMM} J\"ungel A. Matthes D. Milisic J. P., \emph{Derivation
of new quantum hydrodynamic equations using entropy minimization},
SIAM J. Appl. Math.  \textbf{67}, no. 1, 46-68, (2006).
\bibitem{KB} Kadanoff L.P. Baym G., \emph{Quantum Statistical Mechanics}, Benjamin, N.Y., 1962
\bibitem{K} Khalatnikov I.M., \emph{An introduction to the Theory of Superfluidity}, Benjamin N.Y., 1965.
\bibitem{KT} Keel M. Tao T., \emph{Endpoint Strichartz Estimates}, Amer. J. Math. \textbf{120} (1998), 955-980.
\bibitem{L} Landau L.D., \emph{Theory of the Superfluidity of Helium II}, Phys. Rev. \textbf{60}, 356 (1941).
\bibitem{ML}Li H. L. Marcati P., \emph{Existence and asymptotic behavior of multi-dimensional quanntum hydrodynamic model for semiconductors}, Comm. Math. Phys. \textbf{245}, 2, 215-247 (2004).
\bibitem{M} Madelung E., \emph{Quantentheorie in hydrodynamischer form}, Z. Physik, \textbf{40}, 322 (1927).
\bibitem{RT} Rakotoson J.M. Temam R., \emph{An Optimal Compactness Theorem and Application to Elliptic-Parabolic Systems}, Appl. Math. Letters \textbf{14} (2001), 303-306.
\bibitem{Sj} Sj\"olin P., \emph{Regularity of solutions to the Schr\"odinger equation}, Duke Math. J. 
\textbf{55}, n.3, 699-715 (1987).
\bibitem{T} Tao T, \emph{Nonlinear Dispersive Equations: Local and Global Analysis}, CBMS regional conference series in mathematics (2006).
\bibitem{Tay} M.E. Taylor, \emph{Pseudodifferential Operators and Nonlinear PDEs}, Progress in Mathematics {\bf 100} (1991), Birk\"auser.
\bibitem{TT} Teufel S. Tumulka R., \emph{Simple proof for global existence of bohmian trajectories}, Comm. Math. Phys. \textbf{258}, 349-365 (2005).
\bibitem{V} Vega L., \emph{Sch\"odinger equations: pointwise convergence to the initial data}, Proc. AMS \textbf{102}, n.4, 874-878 (1988).
\end{thebibliography}
\end{document}